\definecolor{red}{rgb}{1.0,0.0,0.0}
\definecolor{blu}{rgb}{0.0,0.0,1.0}
\definecolor{violet}{rgb}{0.5,0.0,0.4}
\numberwithin{equation}{section}
\newtheorem{theo}{Theorem}[section]
\newtheorem{lemma}[theo]{Lemma}
\newtheorem{prop}[theo]{Proposition}
\newtheorem{cor}[theo]{Corollary}
\newtheorem{defi}[theo]{Definition}
\theoremstyle{definition}
\newtheorem{rem}[theo]{Remark}
\newtheorem{ass}[theo]{Assumptions}
\def\vertv{\vert_{_{\V}}}
\def\V{V^\prime}
\def\lipd{ ]\kern-1pt_{_{L}}}
\def\qn{ \hfill 
\hbox{\hskip 6pt\vrule width6pt height7pt
depth1pt  \hskip1pt}}
\begin{document}
\title[Equilibrium points for Optimal Investment]{Optimal Investment with
Vintage Capital:\\
Equilibrium Distributions}
\author[S. Faggian]{Silvia Faggian [1]}
\address{[1]S. Faggian, Department of Economics, Universit\'a ``\emph{Ca'
Foscari}'' Venezia, Italy. \texttt{faggian@unive.it}.}
\author[F. Gozzi]{Fausto Gozzi [2]}
\address{[2] Dipartimento di Economia e Finanza, Universit\'a LUISS - ``Guido
Carli, I-00162, Roma, Italy. \texttt{fgozzi@luiss.it}.}
\author[P. Kort]{Peter M. Kort [3]}
\address{[3] CentER, Department of Econometrics \& Operations Research,
Tilburg University, P.O. Box 90153, 5000 LE Tilburg, The Netherlands;
Department of Economics, University of Antwerp, Prinsstraat 13, 2000 Antwerp
1, Belgium, \texttt{kort@uvt.nl}.}
\maketitle

\begin{abstract}
The paper concerns the study of equilibrium points, or steady states, of
economic systems arising in modeling optimal investment with \textit{vintage
capital}, namely, systems where all key variables (capitals, investments,
prices) are indexed not only by time $\tau$ but also by age $s$. Capital
accumulation is hence described as a partial differential equation (briefly,
PDE), and equilibrium points are in fact equilibrium distributions in the
variable $s$ of ages. Investments in frontier as well as non-frontier
vintages are possible. Firstly a general method is developed to compute and
study equilibrium points of a wide range of infinite dimensional, infinite
horizon boundary control problems for linear PDEs with convex criterion,
possibly applying to a wide variety of economic problems. Sufficient and
necessary conditions for existence of equilibrium points are derived in this
general context. In particular, for optimal investment with vintage capital,
existence and uniqueness of a long run equilibrium distribution is proved
for general concave revenues and convex investment costs, and analytic
formulas are obtained for optimal controls and trajectories in the long run,
definitely showing how effective the theoretical machinery of optimal
control in infinite dimension is in computing explicitly equilibrium
distributions, and suggesting that the same method can be applied in
examples yielding the same abstract structure. To this extent, the results
of this work constitutes a first crucial step towards a thorough
understanding of the behavior of optimal controls and trajectories in the
long run.

\bigskip

\noindent \textit{Key words}: Equilibrium Points; Equilibrium Distributions;
Vintage Capital Stock; Age-structured systems; Maximum Principle in Hilbert
spaces; Boundary control; Optimal Investment.

\bigskip \noindent \textit{Journal of Economic Literature}: C61, C62, E22
\end{abstract}

\tableofcontents

%\begin{frontmatter}

\newpage

\section{Introduction}

Computing equilibrium points, or steady states, and describing their properties is one of the
main goals in the mathematics of economic models. This task, when presuming
an underlying optimal control problem with infinite horizon, is already
nontrivial with one state variable, but it becomes harsh when the dynamics
of the system are infinite dimensional, like in cases when
heterogeneity/path dependency is taken into account. This is the case, for
instance, of optimal investment with vintage capital (capital stock is
heterogeneous in age, see \textit{e.g.} {\cite{F3,FGJET}}), of spatial
growth models (capital stock is heterogeneous in space, see \textit{e.g.} {%
\cite{BoucekkineCamachoFabbri13, Fabbri16, BoucekkineFabbriFedericoGozzi18}}%
), of growth models with time-to-build (capital stock is path dependent, see
\textit{e.g.} {\cite{AseaZak99,Bambi08,BambiFabbriGozzi12}}), {or of models
with heterogeneous agents (see \textit{e.g.} \cite{MollNuno18}).} In all
these examples, equilibrium points are indeed functions (of vintage, or
space, or age) and may be more properly referred to as ``equilibrium
distributions''. Up to now such equilibrium distributions have been studied
\textit{only when the value function of the control problem is described by
an analytic formula} -- a requirement which is very seldom met -- so that
many interesting cases are left out of the picture.

\medskip

On the contrary, this work addresses the study of equilibrium {distributions}
in cases where no explicit formula for the value function is available,
moreover it does so under the general assumptions of an infinite-horizon
infinite-dimensional control problem with linear state equation and general
convex (concave, in the application) payoff, providing a theoretical tool
that can be used in {a variety} of applied examples. In fact, the theory is
put immediately into practise for the optimal investment model with vintage
capital, obtaining analytic formulas for the {equilibrium distributions},
and a complete sensitivity analysis for some instances of the problem. Hence
the paper contains a theoretical and an applied part, both of equal weight
and dignity, whose main achievements are listed below.

\medskip

For the general theory (Sections 2, 3 and 4), we reprise and complete the
study of the control problems analysed  in Faggian and Gozzi \cite{FaGo2}.
\footnote{%
Note that in this theoretical context, since equilibrium distributions can
be seen as points in a suitable infinite dimensional vector space, a space
of functions, they will be still named ``equilibrium points"} There Dynamic
Programming (DP) was employed to prove the existence and uniqueness of a
regular solution $v$ of the Hamilton-Jacobi-Bellman (HJB) equation, as well
as a verification theorem implying existence and uniqueness of optimal
feedback controls, and the fact that $v$ coincides with the value function.
Overall and differently from most contributions to the subject, this work
presents an integrated approach between the DP and MP methods of optimal
control theory. In particular:

\begin{itemize}
\item[(a)] a co-state is associated to the state variable, and necessary and
sufficient conditions for the optimal path are established in the form of a
Maximum Principle (MP) (Theorem \ref{Pmp});

\item[(b)] the co-state associated to an optimal state is shown to coincide
with the spatial gradient of the value function evaluated at that optimal
state (Theorem \ref{pi^*});

\item[(c)] the definition of two types of equilibrium points is introduced:
the stationary solutions of the state-costate system, called MP-equilibrium
points, and the stationary solutions of the closed loop equation (CLE)
arising in the DP approach (Definition \ref{def:eq}), called CLE-equilibrium
points;

\item[(d)]the relationship between the  two types of equilibrium points is explained, and
sufficient (and necessary) conditions for existence of such equilibria are
provided (Theorem \ref{th:equiv});

\item[(e)] two results on the stability of CLE-equilibrium points are given
by using or adapting the existing literature (Propositions \ref{pr:stability}
and \ref{pr:stabilitybis}).
\end{itemize}

It is important noting that the theory cannot be used straightforwardly to
treat applied problems in a satisfactory way. This happens on the one hand
because the results in infinite dimension need to be translated into terms
of the application under analysis, and on the other hand as it may be
necessary to exploit the particular structure of the applied problem to
specify formulas for practical use. One example is worked out through
Theorem \ref{equgen}, in the case of the model of optimal investment with
vintage capital.

\medskip

In the applied part of this work (Sections 5 and 6), the theoretical results
are used on the optimal investment model with vintage capital deriving:

\begin{itemize}
\item[(e)] the existence of {MP- or CLE-equilibrium points}, which is proven
equivalent to the existence of solutions of a numerical equation explicitly
derived from the data;

\item[(f)] analytic formulas for {MP- or CLE-equilibrium points} in some
relevant examples;

\item[(g)] a sensitivity analysis {for some particular sets of data}.
\end{itemize}

In particular, the sensitivity analysis enables the development of new
economic results while analyzing the vintage capital stock model in which
revenue is a strictly concave and linear quadratic function of output, where the strict
concavity is caused by market power on the output market. As is standard in
this literature (Feichtinger et al. \cite{F3}), output linearly depends on
the capital goods, whereas investment costs are convex and linear quadratic.
We show that the equilibrium distribution capital stock is first increasing and then
decreasing in the age of the capital good. The increasing part is the result
of investment costs being relatively large when capital goods are relatively
new. On the other hand, such investments are attractive due to the long
lifetime of new capital goods. Capital goods of older age have a shorter
lifetime. This gives an incentive to reduce investments in older capital
goods, resulting in the fact that the equilibrium distribution capital stock for old
machines decreases with respect to age. We further establish another
non-monotonicity dependence of the equilibrium distribution capital goods level, but now
with respect to the productivity of the capital goods. If productivity is
relatively low, the number of capital goods increases if productivity goes
up. This is because a given capital good produces more so that the firm is
more eager to invest in it. On the other hand, if productivity is relatively
large the firm decreases investments, because otherwise the firm
overproduces resulting in a too low marginal revenue. In other words, some
optimal output level exists and less capital goods are needed to produce
this level when productivity is high.

\medskip

In conclusion, this work shows how successfully and effectively the
theoretical machinery of optimal control in infinite dimension is in
computing \textit{explicit} formulas and studying properties for equilibrium
distributions, also \textit{in absence of an explicit formula for the value
function.} We believe that the theoretical tools developed in the first part
of this work can be {successfully} employed in examples yielding the same
abstract structure (like those mentioned at the beginning of this
introduction) and possibly extended to more complex cases with the use of
suitable numerical approximations: this will be the subject of future work.

\medskip

The paper is organized as follows.
 Section \ref{MODEL} {presents a family of
optimal investment models with vintage capital.} Sections \ref{sec:math}
presents the abstract optimal control problem and shows that the problem
contained in Section \ref{MODEL} falls into that wider class. {Section \ref%
{sec:mathbis} is the theoretical core of the paper, where we recall the
results obtained with the DP approach in \cite{FaGo2} (Section \ref{DPKNOWN}%
), we state and prove first order optimality conditions in terms of a
Maximum Principle (Section \ref{subsec:MP}), and we present and discuss the
general results on equilibrium points (Section \ref{SS:EQPT}).}  In Section %
\ref{sec:avc}, the general results of the previous sections are applied to
the model of optimal investment with vintage capital, providing a technique
to derive analytic formulas for the {equilibrium distributions}. Finally, in
Section \ref{sens}, a sensitivity analysis is conducted on some instances of
the problem of Section \ref{sec:avc}, i.e. where both revenues and costs are
chosen linear-quadratic. This section also contains numerical results as
illustration. An appendix with proofs of the theorems of Section \ref%
{sec:mathbis} and \ref{sec:avc}, as well as some additional results, completes the work.

 We remark that he paper is organized as to allow the reader less interested in mathematical details to approach Sections \ref{sec:avc} and \ref{sens}  without necessarily going through the theoretical Sections \ref{sec:math} and \ref{sec:mathbis}.

\bigskip

\subsection{Literature Review}

We complete this introductory section with an overview of literature on
vintage capital, and on optimal control of infinite dynamical systems,
thereby explaining what the present paper adds to each field.

From an economic point of view, the paper contributes to the literature of
vintage capital stock models. Such models extend standard capital
accumulation models, like, among many others, Eisner and Strotz \cite%
{Eisner63} and Davidson and Harris \cite{Davidson81} where capital goods are
a function of just time. The extension is that also the age of the capital
goods is taken into account. This enables to distinguish different vintages
of capital goods so that one could explicitly analyze issues like aging
(Barucci and Gozzi \cite{BG2}), learning (Greenwood and Jovanovic \cite%
{Greenwood01}), pollution (Xepapadeas and De Zeeuw \cite{Xepapadeas99}),
forest management (Fabbri, Faggian and Freni \cite{FFF}), and technological
progress (Feichtinger et al. \cite{F3}). We consider the kind of vintage
capital stock models where investments in older capital goods are possible.
This distinguishes the framework to be considered from works like Solow et
al.\cite{Solow}, Malcomson \cite{Malcomson}, Benhabib and Rustichini \cite%
{Benhabib}, and Boucekkine et al. \cite%
{Boucekkine1,Boucekkine2,Boucekkine3,Boucekkine4}.

The first contribution in vintage capital literature, which consider models where investments in older capital goods are also possible, is  Barucci and Gozzi
\cite{BG1}. They consider the vintage capital stock framework where, as in
Feichtinger et al. \cite{F2}, revenue is linearly increasing in output,
implying that the output price is constant, and linear-quadratic investment
costs.\ Like in Feichtinger et al. \cite{F2}, they do derive equilibrium distribution
expressions for capital goods of different ages and corresponding
investments. The present paper generalizes these contributions by obtaining
the equilibrium distribution expression of the capital goods for a model with general
concave function.

Barucci and Gozzi \cite{BG2} extends Barucci and Gozzi \cite{BG1} by
considering technological progress, while in Xepapadeas and De Zeeuw \cite%
{Xepapadeas99} the production process produces emissions next to products.
Both papers keep the revenue linearly dependent on output. Provided an
 equilibrium distribution exists, which is not the case when we have ongoing technological
progress as in Barucci and Gozzi \cite{BG2}, due to this linearity equilibrium distribution expressions are much easier to obtain compared to a revenue function
being concave as in the present paper.

Closer to our present paper than the works cited above is Feichtinger et al.
\cite{F3}, in which also a firm with market power is considered. The
difference with our work is that Feichtinger et al. considers technological
progress. In particular, the main part of their work analyzes how the firm
reacts with its investment policy to a technological breakthrough, which is
a point in time at which a new technology is invented. The implication is
that productivity of the capital goods of vintages borne after the
breakthrough time jumps upwards. Our model is simpler in the sense that we
do not consider technological progress. However, our analysis goes further
than in Feichtinger et al. \cite{F3} in that we were able to derive an
analytical expression for the equilibrium distribution. This we could do for a general
concave revenue function, where Feichtinger et al. \cite{F3} just considers
linear-quadratic revenue. Note that after the technological breakthrough
Feichtinger's model turns into our model with prespecified revenue function.
This implies that also in their framework a unique equilibrium distribution exists,
which can be calculated using the results of the present paper.

\bigskip

From the point of view of mathematics, the main features of the optimal
control problem here considered are: $(i)$ the linear state equation and the
convex cost criterion; $(ii)$ the presence of a boundary control; $(iii)$
the age structure of the driving operator $A$ in the state equation.

Optimal control of infinite dimensional systems is the subject of many books
and papers in the recent literature. Among the books in the deterministic
case we mention Lions \cite{JLL} and Barbu and Da Prato \cite{BD1}, and the
more recent ones Li and Yong \cite{LiYong95}, and Troltzsch \cite%
{Troltzsch10book}. For the stochastic case (concerning the dynamic
programming approach) one can see the recent book \cite{FabbriGozziSwiech17}.

Concerning the dynamic programming approach to problems with linear state
equation and convex cost but with distributed control, we refer the reader
to Barbu and Da Prato \cite{BD1, BD2,BD3}, for some linear convex problems
to Di Blasio \cite{D1,D2}, for the case of constrained control to Cannarsa
and Di Blasio \cite{CD}, and for the case of state constraints to Barbu, Da
Prato and Popa \cite{BDP} (see also Gozzi \cite{G1,G2,G3} for a
generalization of this approach to the case of semilinear state equations).
For boundary control problems we recall, in the case of linear systems and
quadratic costs (where the HJB equation reduces to the operator Riccati
equation) \textit{e.g.} the books by Lasiecka and Triggiani \cite{LT,LTold},
the book by Bensoussan, Da Prato, Delfour and Mitter \cite{BDDM}, and, for
nonautonomous systems, the papers by Acquistapace, Flandoli and Terreni \cite%
{AFT, AT1, AT2, AT3}. For the case of a linear system and a general convex
cost function, we mention the papers by Faggian \cite{Fa1,Fa0,Fa2,Fa3,FaSC},
and by Faggian and Gozzi \cite{FaGo,FaGo2} (in particular, the theory
developed in the last two works is the starting point for theory in the
present paper, and is  recalled in Section \ref{DPKNOWN}). On the Pontryagin
maximum principle for boundary control problems we mention again, in the
linear quadratic case, the books \cite{LT,LTold,JLL} and \cite{BDDM}; in the
case of linear systems with convex cost, e.g., the book by Barbu and
Precupanu (Chapter 4 in \cite{BP}), and the papers \cite{Barbu80}, \cite%
{Iftode89}; for general nonlinear boundary control problems, e.g., \cite%
{CannarsaTessitore94}, \cite{Fattorini68}, \cite{Fattorini87}, \cite%
{Troltzsch89} \cite{GozziTessitore97} \cite{GozziTessitore98}.  None of them
covers the class of problems treated here.

The main contributions of the present paper with respect to the mathematical
literature quoted above are: (1) the proof of the Maximum Principle for
infinite dimensional, infinite horizon optimal control problems with
features $(i)-(iii)$; (2) the co-state inclusion which reconnects the value
function with the co-state; (3) the analysis of equilibrium points of the
control problem.

\section{The optimal investment model with vintage capital}

\label{MODEL}

We now describe the model of optimal investment with vintage capital, in the
setting introduced by Barucci and Gozzi \cite{BG1}\cite{BG2}, and later
reprised and generalized by Feichtinger et al. \cite{F1,F2,F3}, and by
Faggian \cite{Fa2,Fa3} and Faggian and Gozzi \cite{FaGo}.

The capital accumulation process is given by the following system
\begin{equation}
\begin{cases}
\frac{\partial K(\tau ,s)}{\partial \tau }+\frac{\partial K(\tau ,s)}{%
\partial s}+\mu K(\tau ,s)=u_{1}(\tau ,s),\quad (\tau ,s)\in ]t,+\infty
\lbrack \times ]0,\bar{s}] \\
K(\tau ,0)=u_{0}(\tau ),\quad \tau \in ]t,+\infty \lbrack \\
K(t,s)=x(s),\quad s\in \lbrack 0,\bar{s}]%
\end{cases}
\label{ipde}
\end{equation}%
with $t>0$ the initial time, $\bar{s}\in \lbrack 0,+\infty ]$ the maximal
allowed age, and $\tau \in \lbrack 0,T[$ with horizon $T=+\infty $. The
unknown $K(\tau ,s)$ represents the amount of capital goods of age $s$
accumulated at time $\tau $, the initial datum is a function $x\in L^{2}(0,%
\bar{s})$ (the space of square integrable functions on $(0,\bar{s})$), $\mu
>0$ is a depreciation factor. Moreover, $u_{0}:[t,+\infty \lbrack
\rightarrow {{\mathbb{R}}}$ is the investment in new capital goods ($u_{0}$
is the boundary control) while $u_{1}:[t,+\infty \lbrack \times \lbrack 0,%
\bar{s}]\rightarrow {{\mathbb{R}}}$ is the investment at time $\tau $ in
capital goods of age $s$ (hence, the distributed control). Investments are
jointly referred to as the control $u=(u_{0},u_{1})$. The output rate is
\begin{equation}
{Q(K(\tau )):}=\int_{0}^{\bar{s}}\alpha (s){K(\tau ,s)}ds,  \label{Q}
\end{equation}%
where $\alpha (s)$ is a productivity parameter. Selling the output to
consumers results in an instantaneous revenue, $R\left( Q\right) ,$ where $R$
is a concave function. Capital stock can be increased by investing, and
investment costs are given by
\begin{equation}
C({u(\tau )})\equiv C_{0}({u}_{0}{(\tau )})+C_{1}({u}_{1}{(\tau )})\equiv
C_{0}(u_{0}({\tau )})+\int_{0}^{\bar{s}}{c_{1}}(s,u_{1}({\tau ,}s))ds,
\label{ccc}
\end{equation}%
with $C_{1}$ indicating the investment cost rate for technologies of age $s$%
, $C_{0}$ the investment cost in new technologies, including adjustment
costs, $C_{0}$, $C_{1}$ convex in the control variables. The firm's payoff
is then represented by the functional
\begin{equation}
I(t,x;u_{0},u_{1})=\int_{t}^{+\infty }e^{-\lambda \tau }[R({Q(K(\tau )})-C({%
u(\tau )})]d\tau ,  \label{profits}
\end{equation}%
where $\lambda \in \mathbb{R}$ is the discount rate. Note that $\lambda$
is usually assumed positive, but here we leave the possibility of choosing a
negative $\lambda$  (corresponding, for example, to a negative
interest rate). The entrepreneur's problem is that of maximizing $%
I(t,x;u_{0},u_{1}\kern-1pt)$ over all state--control pairs $\{K,(\kern%
-1ptu_{0},u_{1}\kern-1pt)\kern-1pt\}$ which are solutions (in a suitable
sense) of equation (\ref{ipde})   
and keep the capital stock $K(\tau,s)$  nonnegative at all times. Such a problem is known as
 \emph{vintage capital} problem, for
the capital goods depend jointly on time $\tau $ and on age $s$, which is
equivalent to their dependence on time and vintage $\tau -s$.

We finally recall the definition of the value function of the problem
\begin{equation}  \label{eq:VFsec2}
V(t,x):=\inf_{u\in L_\lambda^p(t,+\infty, U)}I(t,x;u_0,u_1).
\end{equation}
Since $R$ and $C$ are not time dependent it is immediate to see that
\begin{equation}  \label{eq:V0Fsec2}
V(t,x)=e^{-\lambda t}V(0,x)=:V_0(x).
\end{equation}

\begin{rem}
\label{sc1} As a matter of fact, we treat the above problem without the
state constraints $K(\tau,s)\ge0$ for all $s$ and $\tau$, and check that
constraints are satisfied \emph{a posteriori} by the optimal trajectories of
the unconstrained problem. In such a case, those trajectories are also optimal
for the problem with state constraints.\qn
\end{rem}

\subsection{Revenues and costs}

In order to be able to treat optimal investment with vintage capital into
the wider class of abstract problems described in Sections \ref{sec:math}
and \ref{sec:mathbis}, we specify the assumptions on revenues $R$ and costs $%
C$  which ensure that the basic assumptions of the abstract problem
(Assumptions \ref{asst2}, (3)-(6)) are fulfilled.

\begin{ass}
\label{asst1}

\begin{enumerate}
\item[($i$)] $R\in C^1(\mathbb{R})$, $R$ concave, $R^\prime$ Lipschitz
continuous. Moreover $\alpha \in H^1(0,\bar s)$\footnote{$H^1(0,\bar s)$ is
the space of square integrable functions which admit a square integrable
derivative in weak sense. Continuous functions with piecewise continuous
derivatives are included in this space.} and $\alpha(\bar s)=0$.

\item[($ii$)] $C_0(r)$ and $r\mapsto c_1(s,r)$ are convex, lower
semi--continuous functions, with injective{\footnote{%
A multivalued function $\rho:U\to\mathbb{R}$ is injective when $%
\rho(u_1)\cap\rho (u_2)=\emptyset$ for every $u_1,u_2\in U$, $u_1\ne u_2$.}}
subdifferential at all $r\in\mathbb{R}$.

\item[($iii$)] $C_0^*(r) ,r\mapsto c_1(s,\cdot)^*(r)$ (are Fr\'echet
differentiable and) have Lipschitz continuous derivatives, for all $s\in[%
0,\bar s]$.

\item[($iv$)] $C_0(r)$ and $r\mapsto c_1(s,r)$ are bounded below by a
function of type $a\vert r\vert^p+b$, for some $a>0$, $b\in \mathbb{R}$, $%
p>1 $.

%\item[($v$)] Either $p> 2$, or $R$ has at most linear growth.
\end{enumerate}
\end{ass}

In the above statement, we denoted by $f^{\ast }$ the convex conjugate of a
convex function $f$, in particular $C_{0}^{\ast }(r)=\sup_{w\in \mathbb{R}%
}\{wr-C_{0}(w)\},$ ${c_{1}}(s,\cdot )^{\ast }(r)=\sup_{w\in \mathbb{R}}\{wr-{%
c_{1}}(s,w)\}.$ Note that no strong regularity of $C$ is required.
%instead $C $ has to grow fast enough at infinity to enhance the regularity of $%
%C^{\ast } $.
\bigskip

For example, suitable choices for the \textit{revenues} are the following:

(a) \textit{Linear-quadratic: }$R(Q)=-aQ^{2}+bQ$;

(b) \textit{Logarithmic}: $R(Q)=\ln (1+Q),$ for $Q\geq 0$ and $R(Q)=Q$ for $%
Q<0$;

(c) \textit{Power} $\gamma \in (0,1)$: $R(Q)=b[(\nu +Q)^{\gamma }-\nu^\gamma
],$with $b,\nu >0$ ($\nu $ arbitrary small), for $Q\geq 0$ and $R(Q)=b\gamma
\nu^{\gamma -1}Q$ for $Q<0.$ Note in particular that this $R$ converges as $%
\nu $ tends to 0 to $R(Q)=bQ^{\gamma },$ for $Q\geq 0$ and $R(Q)=-\infty $
for $Q<0\footnote{%
The definition of $R$ for negative values of $Q$ is needed in order to apply
the general theory, although negative values of $Q$ will never emerge in our
calculations. Note also that setting $R\left( Q\right)=-\infty$ for $Q<0$ is
equivalent to require $Q\geq 0$ in optimal solutions.}.$

\bigskip

\bigskip Suitable choices for the \textit{costs} are, once set $\beta
=(\beta _{0},\beta _{1})\in \mathbb{R} \times L^{\infty }(0,\bar{s})$, with $%
\beta _{1}(s),\beta _{0}\geq \epsilon \geq 0$, $q=(q_{0},q_{1})\in
%{\mathcal{U}{211d} }
\mathbb{R}_+\times L^{2}(0,\bar{s})$, the following:

(A) \textit{Linear-quadratic}:
\begin{equation}  \label{lqcost}
C(u)=\int_{0}^{\bar{s}}[\beta _{1}(s)u_{1}^{2}(s)+q_{1}(s)u_{1}(s)]ds+\beta
_{0}u_{0}^{2}+q_{0}u_{0}
\end{equation}

(B) \textit{Linear+quadratic with constrained control:}
\begin{eqnarray}
C(u_{0},u_{1}) &=&C_{0}(u_{0})+C_{1}(u_{1})  \label{lqc} \\
&=&q_{0}u_{0}+g_{\beta _{0},M_{0}}(u_{0})+\int_{0}^{\bar{s}}\left[ \alpha
_{1}(s)u_{1}(s)+g_{\beta _{1}(s),M_{1}}(u_{1}(s))\right] ds
\end{eqnarray}%
where
\begin{equation*}
g_{\beta ,M}(u)=\left\{
\begin{array}{c}
\beta u^{2} \\
+\infty%
\end{array}%
\begin{array}{c}
|u|\leq M \\
|u|>M%
\end{array}%
\right.
\end{equation*}%
Such a cost can be easily generalized to a case where $u$ belongs to any
compact interval and not necessarily $u\in \lbrack -M,M]$.

(C) \textit{Linear+Power} \textit{costs}:
\begin{eqnarray}  \label{pc}
C(u_{0},u_{1}) &=&C_{0}(u_{0})+C_{1}(u_{1}) \\
&=&q_{0}u_{0}+f_{\beta _{0}}(u_{0})+\int_{0}^{\bar{s}}\left[
q_{1}(s)u_{1}(s)+f_{\beta _{1}(s)}(u_{1}(s))\right] ds
\end{eqnarray}%
where,  for $p>2$,
\begin{equation*}
f_{\beta }(u)=\left\{
\begin{array}{c}
\beta \left[ (u+\theta )^{p}-\theta ^{p}\right] \\
+\infty%
\end{array}%
\begin{array}{c}
u\geq 0 \\
u<0%
\end{array}%
\right.
\end{equation*}%
which implies also positivity constraints of the controls.

We treat all of these cases in Section \ref{sec:avc}. Moreover, in Section %
\ref{sens} we treat the case of linear--quadratic revenues and costs for
which we derive analytic formulas for the long run optimal couples, and
perform a complete sensitivity analysis.

\bigskip

The reader is advised that Sections \ref{sec:math}, \ref{sec:mathbis}, and
the Appendix are devoted to the mathematics of the general problem and
require a good knowledge of functional analysis to be fully understood.
Nonetheless, they may be skipped at a first reading, as the reader will find
in Section \ref{sec:avc} the theoretical results translated in terms of the
problem of optimal investment with vintage capital.

\section{The theoretical framework}

\label{sec:math}

Here we introduce an abstract class of infinite dimensional optimal control
problems with linear evolution equation and convex payoff, in which the
control may also act on the boundary, and address it as {(P)}. Then, in
Section \ref{INTROSUBMAT} we show that the optimal investment model with
vintage capital described in the previous section is of type (P).

\subsection{Notation}

\label{SS:NOT} The expression $a\vee b$ means the maximum of the real
numbers $a$ and $b$. If $X$ is a Banach space, we indicate its norm with $%
|\cdot |_{X}$, its dual with $X^{\prime }$, with $\langle \cdot ,\cdot
\rangle_{X^{\prime },X} $ the duality pairing. When $X=V^{\prime }$ we use
for simplicity $\langle \cdot ,\cdot \rangle$ in place of $\langle \cdot
,\cdot \rangle_{V^{\prime },V}$. If $X$ is also a Hilbert space, we indicate
with $(\cdot \vert\cdot )_{X} $ the inner product in $X$.

If $X$ and $Y$ are Banach spaces, then $C^{1}(X)$ denotes all Fr\'{e}chet
differentiable functions from $X$ to $\mathbb{R}$, and $\mathcal L(X,Y)$ the set of
all linear and continuous operators from $X$ to $Y$, with associated norm $\Vert \cdot\Vert_{\mathcal L(X,Y)}$. Moreover we set
\begin{equation*}
\begin{split}
& Lip(X;Y)=\{f:X\rightarrow Y~:~[f]\kern-1pt_{_{L}}:=\sup_{x,y\in X,~x\neq y}%
\frac{|f(x)-f(y)|_{Y}}{|x-y|_{X}}<+\infty \} \\
& C_{Lip}^{1}(X):=\{f\in C^{1}(X)~:~[f^{\prime }]\kern-1pt_{_{L}}<+\infty \}
\end{split}%
\end{equation*}%
{and, for $p\ge 1$,}
\begin{equation*}
\begin{split}
& \mathcal{B}_{p}(X,Y):=\{f:X\rightarrow \mathbb{R}~:~|f|_{\mathcal{B}%
_{p}}:=\sup_{x\in X}{\frac{|f(x)|_{Y}}{1+|x|_{X}^{p}}}<+\infty \},\ \ \
\mathcal{B}_{p}(X):=\mathcal{B}_{p}(X,\mathbb{R}), \\
& C([0,T],\mathcal{B}_{p}(X,Y)):=\{f:[0,T]\rightarrow \mathcal{B}_{p}(X,Y)\
:\ f\text{ continuous}\}
\end{split}%
\end{equation*}%
Note that $\mathcal{B}_{p}$ are Banach spaces if endowed with the norm $%
|\cdot |_{\mathcal{B}_{p}}$, so that continuity is intended with respect to
such norms. Furthermore, we set
\begin{equation*}
\Sigma _{0}(X):=\{w\in C_{Lip}^{1}(X)\ :\ w\ \mathrm{is\ convex}\}.
\end{equation*}%
Finally, if $X$ is a Hilbert space and $h:X\rightarrow \mathbb{R}$ is a
convex function, then $h^{\ast }$ will denote its convex conjugate, namely $%
h^{\ast }:X\rightarrow \mathbb{R}$, $h^{\ast }(x)=\displaystyle\sup_{y\in
X}\{\langle x,y\rangle -h(y)\}$. \bigskip

%\color{red}

\subsection{The abstract optimal control problem (P)}

\label{SS:ABS}

We consider two real separable Hilbert spaces $V$ and $H$ with $V$
continuously embedded in $H$. We identify $H$ with its dual and we call $%
V^{\prime }$ the topological dual of $V$, which we do not identify with $V$
for the reasons explained in Section \ref{INTROSUBMAT}. We then get a
so-called Gelfand triple
\begin{equation*}
V \hookrightarrow H \hookrightarrow V^{\prime }
\end{equation*}
We choose $V^{\prime }$ as state space. The control space is the real
separable Hilbert space $U$ (which we identify with its dual $U^{\prime }$).
We consider the control system with state space $V^{\prime }$, control space
$U$, and varying initial time $t\ge 0$, described by
\begin{equation}
\begin{cases}
\label{eq:statoV'}y^{\prime }(\tau )=Ay(\tau )+Bu(\tau ), & \tau >t \\
y(t)=x\in V^{\prime }, &
\end{cases}%
\end{equation}
 where $A$ and $B$ are linear operators, %with $A$
possibly
unbounded. Moreover, we take a convex functional of the following type
\begin{equation}
J (t,x,u)=\int_{t}^{+\infty }e^{-\lambda \tau }\left[ g_{0}\left( y(\tau
)\right) +h_{0}\left( u(\tau )\right) \right] d\tau  \label{J in H}
\end{equation}%
where the function $g_{0}$ and $h_{0}$ are convex functions. The problem {(P)%
} is that of minimizing $J_{\infty }(t,x,u)$ with respect to $u$, over the
set of admissible controls
\begin{equation}
L_{\lambda }^{p}(t,+\infty ;U)=\{u:[t,+\infty )\rightarrow U\ ;\ \tau
\mapsto u(\tau )e^{-\frac{\lambda \tau }{p}}\in L^{p}(t,+\infty ;U)\},
\end{equation}
which is a Banach space with the norm
\begin{equation*}
\vert u\vert _{L_{\lambda }^{p}(t,+\infty ;U)}=\int_{t}^{+\infty }|u(\tau
)|_{U}^{p}e^{-\lambda \tau }d\tau = \vert e^{-\frac{{\lambda }}{p}(\cdot)}u {%
(\cdot)}\vert _{L^{p}(t,+\infty ;U)}.
\end{equation*}

\begin{rem}
\label{sc2} In the above problem no constraints on controls or on states are
assumed although, in economic applications, the state represents capital
stock, usually assumed nonnegative. Here we proceed along with the frequently  
  used idea (see e.g.\cite{FGJET}) to check {\emph{ex post}} that the
constraints are satisfied by the optimal trajectories of the unconstrained
problem, so those trajectories are optimal also for the constrained problem.\qn
\end{rem}

{The basic assumptions on the data are stated below and will hold throughout
the paper.}

\begin{ass}
\label{asst2}

\begin{enumerate}
\item[(1)] $A:D(A)\subset V^{\prime }\rightarrow V^{\prime }$ is the
infinitesimal generator of a strongly continuous semigroup $\{e^{\tau
A}\}_{\tau \geq 0}$ on $V^{\prime }$.  {Moreover there exists $\omega\in
\mathbb{R}$ such that\footnote{{When $\omega >0$, a semigroup $S(t)$ with
this property is usually called a \emph{pseudo-contraction semigroup}, as $%
e^{-\omega t}S(t)$ is a contraction semigroup with generator $A-\omega I$.}}
\begin{equation*}
\vert e^{\tau A}x\vert_{_{V^\prime}}\le e^{\omega \tau}\vert
x\vert_{_{V^\prime}},~\forall \tau\ge0;
\end{equation*}%
}

\item[(2)] $B\in L(U,V^\prime)$;

\item[(3)] $g_0\in\Sigma_0(V^\prime)$

\item[(4)] $h_0:U\to \mathbb{R}$ is convex and lower semi--continuous, $%
\partial h_0$ is injective.

\item[(5)] $h_0^*(0)=0$, $h_0^*\in \Sigma_0({U})$;

\item[(6)] $\exists a>0$, $\exists b\in \mathbb{R}$, $\exists p>1$ : $%
h_0(u)\ge a\vert u\vert_U^p+b$, $\forall u\in U$;  {}

\item[(7)] $\lambda>\omega$.
\end{enumerate}
\end{ass}

In proving some results we will need to specify the assumption (7) above as
follows.

\begin{ass}
\label{asst2bis} In addition to Assumption \ref{asst2}, we require that
either

\begin{itemize}
\item[(1)] $p> 2,$ \ \ $\lambda>(2\omega)\vee\omega$
\end{itemize}

or

\begin{itemize}
\item[(2)] $g_0 \in \mathcal{B}_1(V^\prime).$
\end{itemize}
\end{ass}

The adjoint of $A$ in the inner product of $V^{\prime }$ is denoted by $A^*$%
, while the adjoint of $A$ with respect to the duality $\langle\cdot,\cdot%
\rangle$ in $V,V^\prime$ is the unbounded operator $A_1^*$ on $V$, $A_1^*:
D(A_1^*)\subset V \to V.$

\begin{rem}
\label{coniugate} We recall that, if a function {$h:U\to {\mathbb{R}}\cup
\{+\infty\}$ is lower semicontinuous and convex (and not identically $%
+\infty $),} then the subgradient $\partial h$ is defined as $\partial
h(u)=\{u^*\in U~:~h(v)-h(u)\ge\langle u^*,v-u\rangle_U,~\forall v\in U\}.$
Moreover if $\partial h$ is injective
%(\textit{i.e.} $\partial h(u)\cap\partial h(\baru)=\emptyset $ implies $u\not=\bar u$),
then $h^*$ is Fr\'echet differentiable with $(h^*)^\prime(u)=(\partial
h)^{-1}(u)$ for all $u\in U$.\qn
\end{rem}

\subsection{{Optimal investment with vintage capital is of type {(P)}} \label%
{INTROSUBMAT}}

We end the section by showing that the problem of optimal investment with
vintage capital described in Section \ref{MODEL} falls in the general class {%
(P)} described above and refer interested readers to \cite{Fa3} for full
detail.\footnote{%
See also \cite{EN}, \cite{BDDM} or \cite{Pazy} for the general theory of
strongly continuous semigroups and evolution equations.}

We at first formulate an intermediate abstract problem in $H=L^{2}(0,%
\overline{s})$, the space of square integrable functions of variable $s$,
using the modified translation semigroup $\{e^{A_{0}t}\}_{t\geq 0}$ on $H$,
namely the linear operators $e^{A_{0}t}:H\rightarrow H$ such that
\begin{equation*}
[e^{A_{0}t}f](s)=f\left( s-t\right) e^{-\mu t}, \quad \hbox{if $s\in \lbrack
t,\overline{s}]$,} \quad and \quad \hbox{$[e^{A_{0}t}f](s)=0$ otherwise.}
\end{equation*}
If $H^{1}(0,\bar{s})=\{f\in L^{2}(0,\overline{s}):f^{\prime }\in L^{2}(0,%
\overline{s})\},$ then the generator of $\{e^{A_{0}t}\}_{t\geq 0}$ is the
operator $A_{0}:D(A_{0})\subset H\rightarrow H,$ with
\begin{equation*}
D(A_{0})=\{f\in H^{1}(0,\bar{s}):f(0)=0\},\qquad A_{0}f(s)=-f^{\prime
}(s)-\mu f(s).
\end{equation*}
The adjoint of $A_{0}$ is then $A_{0}^{\ast }:D(A_{0}^{\ast })\rightarrow H$
with
\begin{equation*}
D(A_{0}^{\ast})=\{f\in H^{1}(0,\bar{s}):f(\bar{s})=0\}, \qquad
[A_{0}^{\ast}f](s)=f^{\prime }(s)-\mu f(s),
\end{equation*}
generating itself a modified translation semigroup $\{e^{A_{0}^{\ast
}t}\}_{t\geq 0}$ on $H,$ given by
\begin{equation*}
\left[ e^{A_{0}^{\ast }t}f\right] (s)=f\left( s+t\right) e^{-\mu t}, \quad %
\hbox{if $s\in\lbrack 0,\overline{s}-t]$,} \quad and \quad \hbox{$\left[
e^{A_{0}^{\ast }t}f\right] (s)=0$ otherwise}.
\end{equation*}
The control space is $U={{\mathbb{R}}}\times H$, the control function is a
couple
\begin{equation*}
u\equiv (u_{0},u_{1}):[t,+\infty )\rightarrow {{\mathbb{R}}}\times H,
\end{equation*}
and the control operator is given by
\begin{equation*}
Bu\equiv B(u_{0},u_{1})=u_{1}+u_{0}\delta _{0}, \quad \hbox{for all
$(u_{0},u_{1})\in {{\mathbb{R}}}\times H$,}
\end{equation*}
$\delta _{0}$ being the Dirac delta at the point $0$. \color{black} With
this notation, the original state equation (\ref{ipde}) can be written as%
\begin{equation}
\begin{cases}
K^{\prime }(\tau )=A_{0}K(\tau )+Bu(\tau ), & \tau >t \\
K(t)=x, &
\end{cases}
\label{statoH}
\end{equation}
Note that $H$ and $U$ are Hilbert spaces, and that $B$ is unbounded, meaning
that it is \emph{not} a continuous operator from $U$ to $H$ (unless $u_{0}=0$%
, corresponding to identically null boundary control $u_{0}(\tau )$), for
the Dirac delta does not lie in $H$. Then (\ref{statoH}) needs to be
interpreted in a suitable way, for instance in an extended state space.
\bigskip

Then we generalize all previous notions to a wider space. We set $V\equiv
D(A_{0}^{\ast }),$ and assume $V^{\prime }$ as state space of the abstract
problem. Indeed by standard arguments (see e.g. \cite[Section II.5]{EN}) --
and in particular by replacing the scalar product in $L^{2}$ with the
duality pairing $\left\langle \phi ,\psi \right\rangle $ with $\phi \in
V^{\prime }$, $\psi \in V$ (coinciding with the {inner} product in $L^{2}$
when $\phi \in L^{2}$) -- the semigroup $\{e^{A_{0}t}\}_{t\geq 0}$ can be
extended to a strongly continuous semigroup $\{e^{At}\}_{t\geq 0}$ on $%
V^{\prime }$, by setting
\begin{equation}  \label{eq:dualitynew}
\langle e^{At}\phi, f \rangle = \langle \phi,e^{A_0^*t}f \rangle\ \ \text{%
for\ every\ } f \in V, \phi \in V^{\prime },
\end{equation}
The generator of $\{e^{At}\}_{t\geq 0}$ is the operator $A:D(A)\subset
V^{\prime }\rightarrow V^{\prime },$ with $D(A)=H$. Moreover the semigroup $%
\{e^{A_{0}^{\ast }t}\}_{t\geq 0}$ can be restricted to a strongly continuous
semigroup on $V$, with generator the restriction of $A_{0}^{\ast }$ to $%
D\left((A_{0}^{\ast})^{2}\right)$.  Such restriction is exactly the adjoint
of $A$ in the duality $\langle\cdot,\cdot\rangle$ and is then denoted, as in
the previous subsection, by $A_{1}^{\ast }$.

The role of $H$ is that of pivot space between $V$ and $V^{\prime }$, namely
$V\subset H\subset V^{\prime }$, with continuous inclusions. The control
operator $B$ is then in $L(U,V^{\prime })$. Its adjoint is given by
\begin{equation}\label{B^*}B^{\ast
}\colon V\rightarrow U, \ \textrm{with}\  B^{\ast }v=(v(0),v).\end{equation}
 It is also useful to
note that $A^{-1}$ is well defined and that
\begin{equation}
\lbrack -A^{-1}\delta _{0}](s)=e^{\mu s},\ \text{while}\
[-A^{-1}f](s)=\int_{0}^{s}e^{-\mu (s-\sigma )}f(\sigma )d\sigma ,\ \text{for
all}\ f\in H  \label{A-1}
\end{equation}

The target functional is also interpreted on extended spaces once the
production function is described as
\begin{equation}
Q(K(\tau ))=\langle \alpha ,K(\tau )\rangle  \label{Qest}
\end{equation}%
where, the duality pairing $\langle \cdot ,\cdot \rangle $ between $V$ and $%
V^{\prime }$ has replaced the scalar product in $L^{2}$ in the original
definition (\ref{Q}) of $Q$, and $c:U\rightarrow \mathcal{U}.$ Then (\ref%
{profits}) becomes
\begin{equation*}
I(t,x;u_{0},u_{1})=\int_{t}^{+\infty }e^{-\lambda \tau }[R(\langle \alpha
,K(\tau )\rangle )-C({u}_{{0}}{(\tau ),u}_{1}{(\tau )})]d\tau
\end{equation*}

The firm's optimal investment problem falls into the wider class described
in the next theoretical sections, provided it is reformulated as a
minimization problem, where the functions $g_0$ and $h_0$ there described
are chosen as
\begin{equation}  \label{dati}
g_0(x):=-R(\langle \alpha,x\rangle), \ h_0(u_0,u_1):=C(u_0,u_1).
\end{equation}
Indeed the following Lemma holds true.

{\color{blue} }

\begin{lemma}
\label{lm:verassvintage}  Assumptions \ref{asst1} imply, along with the
above definitions of $A$ and $B$ and (\ref{dati}), that Assumptions \ref%
{asst2} are satisfied with $\omega=-\mu$. Furthermore if $p>2$, then
Assumption \ref{asst2bis} (1) is satisfied. If instead if $p>1$ and $R$ has
at most linear growth, Assumption \ref{asst2bis} (2) is satisfied.
\end{lemma}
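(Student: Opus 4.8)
The plan is to verify, item by item, that the data $(A,B,g_0,h_0)$ produced by the construction of Section \ref{INTROSUBMAT} together with the identifications (\ref{dati}) satisfy Assumption \ref{asst2}(1)--(7), and then to read off the two alternatives of Assumption \ref{asst2bis}. The items fall into three groups: the \emph{state} items (1)--(2) about $A$ and $B$, which are essentially recorded by the semigroup construction already carried out; the \emph{reward} item (3) about $g_0$, which reduces to the scalar hypotheses on $R$ once one observes that $\alpha\in V$; and the \emph{cost} items (4)--(6) about $h_0=C$, which follow from the pointwise hypotheses \ref{asst1}(ii)--(iv) through the theory of convex integral functionals. Item (7) and Assumption \ref{asst2bis} are then bookkeeping on exponents and on $\lambda$.

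\textbf{State and reward items.} For (1): translation is an $L^2$-isometry, so $|e^{A_0 t}f|_{L^2}\le e^{-\mu t}|f|_{L^2}$, and the same estimate holds for $e^{A_0^* t}$; passing to the $V'$-extension defined by (\ref{eq:dualitynew}) and using $|e^{At}\phi|_{V'}=\sup_{|v|_V\le 1}|\langle\phi,e^{A_0^* t}v\rangle|\le |\phi|_{V'}\,\|e^{A_0^* t}\|_{L(V)}$ together with the fact that $e^{A_0^* t}$ commutes with $A_0^*$ (hence contracts the graph norm of $V=D(A_0^*)$ by $e^{-\mu t}$), one gets $|e^{At}\phi|_{V'}\le e^{-\mu t}|\phi|_{V'}$, i.e. $\omega=-\mu$. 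Item (2) — $B(u_0,u_1)=u_1+u_0\delta_0\in L(U,V')$ — is immediate from $H\hookrightarrow V'$ and $\delta_0\in V'$ (point evaluation is continuous on $V=D(A_0^*)\subset H^1(0,\bar s)\hookrightarrow C([0,\bar s])$), and was already noted in Section \ref{INTROSUBMAT}. For (3): since $\alpha\in H^1(0,\bar s)$ with $\alpha(\bar s)=0$, we have $\alpha\in D(A_0^*)=V$, so $L:=\langle\alpha,\cdot\rangle$ is a bounded linear functional on $V'$ of norm $|\alpha|_V$; as $R$ is concave, $C^1$, with $R'$ Lipschitz, the scalar map $-R$ is convex, $C^1$, with Lipschitz derivative, hence $g_0=(-R)\circ L$ is convex, $C^1$, with $g_0'(x)=-R'(L(x))\,\alpha$ of Lipschitz constant at most $[R^\prime]_L|\alpha|_V^2$; thus $g_0\in\Sigma_0(V')$. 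If moreover $R$ has at most linear growth, then $|g_0(x)|\le c(1+|\alpha|_V|x|_{V'})$, i.e. $g_0\in\mathcal{B}_1(V')$, which is Assumption \ref{asst2bis}(2).

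\textbf{Cost items.} Here $h_0(u_0,u_1)=C_0(u_0)+\int_0^{\bar s}c_1(s,u_1(s))\,ds$ on $U=\mathbb{R}\times L^2(0,\bar s)$, and I would invoke the theory of convex normal integrands (Rockafellar, Ioffe--Levin) to reduce (4)--(5) to \ref{asst1}(ii)--(iii): convexity of $h_0$ is pointwise; lower semicontinuity of the integral part on $L^2$ uses that $c_1$ is a convex normal integrand and that the coercive bound \ref{asst1}(iv) prevents the integral from taking the value $-\infty$; $\partial h_0$ is injective because $\partial h_0(u_0,u_1)=\partial C_0(u_0)\times\{\xi\in L^2:\xi(s)\in\partial_r c_1(s,u_1(s))\text{ a.e.}\}$, so two distinct controls cannot share a subgradient given the pointwise injectivity in (ii); and, interchanging conjugation with the integral, $h_0^*(u_0^*,u_1^*)=C_0^*(u_0^*)+\int_0^{\bar s}c_1(s,\cdot)^*(u_1^*(s))\,ds$, which by (iii) is convex and of class $C_{Lip}^1(U)$. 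For the normalization $h_0^*(0)=0$, note $h_0^*(0)=-\inf_u h_0(u)$ and that, by (iv), $h_0$ is coercive, l.s.c. and convex, hence attains a finite minimum; subtracting this constant from $C$ affects neither the optimizers nor the structural properties, so one may assume $\inf h_0=0$. Item (6) follows by combining the pointwise bounds (iv) for $C_0$ and $c_1$ with elementary inequalities and, when $p\ge 2$, the embedding $L^p(0,\bar s)\hookrightarrow L^2(0,\bar s)$. Finally (7) reads $\lambda>\omega=-\mu$, and Assumption \ref{asst2bis}(1) is exactly ``$p>2$ and $\lambda>(2\omega)\vee\omega=-\mu$'', so nothing further is needed once $p>2$ is assumed.

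\textbf{Main obstacle.} The genuinely non-routine part is the analysis of the integral functional $u_1\mapsto\int_0^{\bar s}c_1(s,u_1(s))\,ds$: one must check that $c_1$ is a convex normal integrand, that the functional is well defined and l.s.c. on $L^2$, and that subdifferentiation and conjugation commute with the integral (measurable-selection / interchange theorems), and in particular that the Lipschitz constant of $(c_1(s,\cdot)^*)^\prime$ in \ref{asst1}(iii) can be taken uniform in $s$, so that $h_0^*$ has a global Lipschitz derivative as an operator on $U=\mathbb{R}\times L^2$. A secondary subtlety is the faithful translation of the cost coercivity \ref{asst1}(iv), stated pointwise in the $L^p$-sense, into the $L^2$-coercivity (6) required on $U$. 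The remaining facts — the semigroup estimate for (1), and the identifications $\alpha\in V$, $\delta_0\in V'$, $B\in L(U,V')$ — are standard.
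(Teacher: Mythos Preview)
Your proof is correct and follows the same item-by-item route as the paper, which verifies (1)--(3) essentially as you do and then dispatches (4)--(6) in two lines by pointing to Assumption \ref{asst1}(ii)--(iv) together with Remark \ref{coniugate}. You are in fact more careful than the paper: it does not discuss the normal-integrand interchange for $C_1$, the normalization $h_0^*(0)=0$, or the $L^p\hookrightarrow L^2$ passage needed for (6), all of which you correctly flag as the nontrivial content.
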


%\color{red}
%\begin{lemma}\label{lm:verassvintage}
%The operator $A$ and $B$ defined above
%satisfy Assumption \ref{asst2}-(1) and (2) with $\omega=-\mu$.
%Hence Assumption \ref{asst2}-(7) is verified when $\lambda>-\mu$.
%Moreover assume that  $R$ and $C$ satisfy Assumptions \ref{asst1}.
%Then $g_0$ and $h_0$ described by (\ref{dati})
%satisfy Assumptions \ref{asst2} (3)-(6).
%\red{Furthermore if $p>2$, then Assumption \ref{asst2bis} (1) is satisfied
%while, if $p>1$ and $R$ has at most linear growth, Assumption \ref{asst2bis} (2) is satisfied.}
%\end{lemma}
%\color{black}

\textit{Proof.} Assumption \ref{asst2}-(1) is satisfied with $\omega=-\mu$
since, for every $\phi\in V^{\prime }$ we have, by definition of $V^{\prime }
$
\begin{equation*}
\vert e^{\tau A}\phi\vert_{V^{\prime }} = \sup_{|f|_{V}=1} \langle e^{\tau
A}\phi,f\rangle = \sup_{|f|_{V}=1} \langle \phi,e^{\tau
A_0^*}f \rangle \le \sup_{|f|_{V}=1} \vert
\phi\vert_{V^{\prime }}\vert e^{\tau A_0^*}f\vert_{V} \le e^{-\mu t}\vert
\phi\vert_{V^{\prime }}.
\end{equation*}
Assumption \ref{asst2}-(2) is trivially satisfied as pointed out above in
the definition of $B$. By Assumption \ref{asst1}, $\alpha \in V$ and $g_0$
is a Fr\'echet differentiable convex function of $x\in V^\prime$, with
Fr\'echet differential $g^\prime_0(x)$ defined by $g^\prime_0(x)[s]=-R^%
\prime(\langle \alpha, x\rangle)\alpha (s)$ . Such differential is a
Lipschitz continuous function of $x$, with Lipschitz constant $%
Lip(g^\prime_0)= Lip(R^\prime) \vert \alpha\vert_V^2$, as
\begin{multline}
\vert g^\prime_0(x)-g^\prime_0(y)\vert_{V}\le\vert R^\prime(\langle \alpha,
x\rangle)-R^\prime(\langle \alpha, y\rangle) \vert \vert \alpha\vert_V \le
Lip(R^\prime)\vert\langle \alpha, x-y\rangle\vert\vert \alpha\vert_V \\
\le Lip(R^\prime)\vert \alpha\vert_V^2\vert x-y\vert_{V^\prime}.
\end{multline}
so that Assumption \ref{asst2}-(3) holds true.  Assumptions \ref{asst1}($ii$%
) coupled with Remark \ref{coniugate} implies both Assumptions \ref{asst2}
(4) and that $h_0^*$ is convex and Fr\'echet differentiable. The fact that $%
h_0^*$ has Lipschitz differential is implied by ($iii$), so that also (5)
holds true. Clearly ($iv $) implies (6).  {The last statement is
straightforward}. \hfill $\Box$

\begin{rem}
\label{rm:noquadraticsection3} It is important to note that, in the case
when the functions $R$ and $C$ are both quadratic, neither (1) nor (2) are
satisfied in Assumption \ref{asst2bis}. Nonetheless necessary and sufficient
conditions of optimality (see Theorem \ref{Pmp}) hold true, and the value function results regular (see Remark \ref{rm:uniquefeedback} and Section \ref{sss:RS} for details).
\qn
\end{rem}

\section{Equilibrium points}

\label{sec:mathbis}

Although the core of the section is the definition of equilibrium points of
the abstract problem (P) and the investigation of their properties, some
results are needed beforehand. Those obtained via Dynamic Programming, and
contained in \cite{FaGo2}, are recalled for the reader's convenience in {%
Section \ref{DPKNOWN}.} On the other hand, Section \ref{subsec:MP} contains
new material, and in particular a version of the Maximum Principle for
problem (P). Finally Section \ref{SS:EQPT} contains the analysis of
equilibrium points.

\subsection{Dynamic Programming for problem (P)}

\label{DPKNOWN}

We here recall the main results contained in \cite{FaGo2}. If the  value
function is defined as
\begin{equation}  \label{VF}
Z (t,x)=\inf_{u \in L^p_\lambda(t,+\infty;U)}J (t,x,u),
\end{equation}
and, if one sets $Z_0(x):=Z(0,x)$, then $Z (t,x)=e^{-\lambda t}Z_0 (x)$, so
that the Hamilton--Jacobi--Bellman equation associated to the problem by
means of Dynamic Programming reduces to that with initial time $t=0$, that
is
\begin{equation}  \label{SHJB}
-\lambda \psi(x)+\langle \psi^\prime(x)\; ,\; A
x\rangle-h_0^*(-B^*\psi^\prime(x))+g_0(x)=0,\ x\in H
\end{equation}
(with $\psi$ the unknown) whose candidate solution is $Z_0(x)$. We refer to $%
p\mapsto h_0^*(-B^*p)$ as to the \emph{Hamiltonian} function.\footnote{%
Note that the function usually called {Hamiltonian} would be $(p,x)\mapsto {%
\langle p,A x\rangle}-h_0^*(-B^*p)+g_0(x)$.}

\begin{defi}
\label{defsolSHJB} A function $\psi$ is a classical solution of the
stationary HJB equation (\ref{SHJB}) if it belongs to $\Sigma_0 (V^\prime)$
and satisfies (\ref{SHJB}) for every $x\in D(A)$.
\end{defi}

\begin{theo}
\label{th:mainnew} {Let Assumptions \ref{asst2} and \ref{asst2bis} hold.}
Then there exists a unique classical solution $\Psi$ to $(\ref{SHJB})$ and
it is given by the value function of the optimal control problem, that is
\begin{equation*}
\Psi(x)=Z_0(x)=\inf_{u\in L^p_\lambda(0,+\infty;U)}J (0,x,u).
\end{equation*}
\end{theo}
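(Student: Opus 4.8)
I would follow the Dynamic Programming route of \cite{FaGo2}: first derive a priori estimates and regularity for the value function $Z_0$, then show $Z_0$ solves (\ref{SHJB}), and finally obtain uniqueness by a verification argument. I would start from the state equation (\ref{eq:statoV'}): for $x\in V^{\prime}$ and $u\in L^p_\lambda(t,+\infty;U)$ the mild solution $y(\tau)=e^{(\tau-t)A}x+\int_t^\tau e^{(\tau-s)A}Bu(s)\,ds$ is well defined in $C([t,+\infty);V^{\prime})$ by Assumption \ref{asst2}(1)--(2); the pseudo-contraction bound $\vert e^{\tau A}x\vert_{V^{\prime}}\le e^{\omega\tau}\vert x\vert_{V^{\prime}}$ together with $\lambda>\omega$, refined by Assumption \ref{asst2bis}, yields the standard estimates ensuring that $J(t,x,\cdot)$ is well defined with values in $(-\infty,+\infty]$ and, through the coercivity \ref{asst2}(6), that it is bounded below uniformly for $x$ in bounded sets. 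Hence $Z_0$ is real-valued with at most polynomial growth; and since $x\mapsto y$ is affine and $g_0,h_0$ are convex, $J$ is jointly convex, so $Z_0$ is convex, and a convex function locally bounded on the Hilbert space $V^{\prime}$ is locally Lipschitz.

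The core is to upgrade this to $Z_0\in\Sigma_0(V^{\prime})$, i.e. $Z_0\in C^1$ with a \emph{globally Lipschitz} gradient --- the step I expect to be the main obstacle. The plan is: (i) produce an optimal control $\bar{u}_x$ by the direct method, coercivity \ref{asst2}(6) with $p>1$ bounding minimizing sequences in $L^p_\lambda$ while joint convexity, lower semicontinuity, and weak continuity of the affine control-to-state map give a weak limit; (ii) get uniqueness of the optimal pair from injectivity of $\partial h_0$ (equivalently Fr\'echet differentiability of $h_0^*$, Remark \ref{coniugate}); (iii) establish Fr\'echet differentiability of $Z_0$ by a sandwich argument --- convexity gives a nonempty subdifferential, while perturbing the initial datum from $x$ to $x+k$, propagating the perturbation through the \emph{linear} dynamics with the optimal control of $x$ held fixed, produces $Z_0(x+k)-Z_0(x)\le\langle p_x(0),k\rangle+o(\vert k\vert_{V^{\prime}})$ with $p_x$ solving the adjoint (co-state) equation along $(\bar{u}_x,\bar{y}_x)$; matching the two bounds identifies $Z_0^{\prime}(x)=p_x(0)$ (the co-state representation, cf. Theorem \ref{pi^*}); (iv) obtain the global Lipschitz bound on $x\mapsto Z_0^{\prime}(x)$ from Lipschitz stability of the optimal triple in $x$, available because the problem is linear--convex and $g_0^{\prime}$, $(h_0^*)^{\prime}$ are Lipschitz (Assumptions \ref{asst2}(3),(5)). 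Equivalently, as actually done in \cite{FaGo2}, one regularizes $g_0,h_0$, solves the approximate stationary equations by a fixed-point argument on the variation-of-constants map, and passes to the limit using uniform growth and Lipschitz-gradient bounds. The unboundedness of $B$ (only $B\in L(U,V^{\prime})$, not $L(U,H)$) and the absence of strict convexity or smoothness of $h_0$ are what make both the differentiability and the global Lipschitz estimate delicate, forcing one to work with the co-state system in the $V$--$V^{\prime}$ duality.

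Granting $Z_0\in\Sigma_0(V^{\prime})$, equation (\ref{SHJB}) follows from the Bellman optimality principle $Z_0(x)=\inf_u\bigl\{\int_0^\delta e^{-\lambda s}[g_0(y(s))+h_0(u(s))]\,ds+e^{-\lambda\delta}Z_0(y(\delta))\bigr\}$ by differentiating at $\delta=0^+$: for $x\in D(A)$ the difference quotient $\delta^{-1}(y(\delta)-x)$ converges to $Ax+Bu(0)$ in $V^{\prime}$ (strong continuity of $\{e^{\tau A}\}$ and $x\in D(A)$), so, $Z_0$ being $C^1$ on $V^{\prime}$, $\delta^{-1}(Z_0(y(\delta))-Z_0(x))\to\langle Z_0^{\prime}(x),Ax+Bu(0)\rangle$; minimizing over the initial control value then turns $\inf_{w\in U}\{\langle Z_0^{\prime}(x),Bw\rangle+h_0(w)\}$ into $-h_0^*(-B^*Z_0^{\prime}(x))$, using $B^*\in L(V,U)$ and $Z_0^{\prime}(x)\in V$, which is precisely (\ref{SHJB}). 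The exchange of $\inf$ with the limit in $\delta$ is legitimate by the continuity of $h_0^*$, Assumption \ref{asst2}(5), and the coercivity \ref{asst2}(6), which confine near-optimal $w$ to a bounded set.

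Finally, uniqueness is a verification argument. Let $\psi\in\Sigma_0(V^{\prime})$ solve (\ref{SHJB}). For any admissible pair with state $y$, Fenchel's inequality $\langle\psi^{\prime}(x),Bw\rangle+h_0(w)\ge-h_0^*(-B^*\psi^{\prime}(x))$ and (\ref{SHJB}) give $\tfrac{d}{ds}\bigl(e^{-\lambda s}\psi(y(s))\bigr)\ge-e^{-\lambda s}[g_0(y(s))+h_0(u(s))]$; integrating on $[0,S]$, letting $S\to+\infty$ and using the decay $e^{-\lambda S}\psi(y(S))\to0$ (Assumption \ref{asst2bis}), we obtain $\psi(x)\le J(0,x,u)$ for every admissible $u$, hence $\psi\le Z_0$. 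For the reverse inequality run the same computation along the closed-loop control $u^*(s)=(h_0^*)^{\prime}\!\bigl(-B^*\psi^{\prime}(y^*(s))\bigr)$, whose closed-loop equation is well posed precisely because $\psi^{\prime}$ and $(h_0^*)^{\prime}$ are Lipschitz and whose trajectory is admissible by the a priori estimates: by construction Fenchel's inequality is an equality along $(y^*,u^*)$, so $\psi(x)=J(0,x,u^*)\ge Z_0(x)$, whence $\psi=Z_0$. A recurring technical point is the chain rule $\tfrac{d}{ds}\psi(y(s))$ along mild solutions that need not lie in $D(A)$ (because $B$ is a boundary operator); this is handled by a Yosida regularization of $A$ and passage to the limit, using the continuity of $\psi$ and $g_0$.
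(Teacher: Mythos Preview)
The paper does not actually contain a proof of this theorem: Section \ref{DPKNOWN} is explicitly a summary of results ``contained in \cite{FaGo2}'', and Theorem \ref{th:mainnew} is stated there as a recalled result with no proof or proof sketch. So there is nothing in the present paper to compare your argument against line by line.

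That said, your outline is a faithful reconstruction of the kind of Dynamic Programming argument one expects from \cite{FaGo2}, and it is internally coherent. One comment on strategy: you present the argument as (a) first prove directly that $Z_0\in\Sigma_0(V')$ via existence/uniqueness of optimal controls, the co-state identification $Z_0'(x)=p_x(0)$, and Lipschitz stability of optimal triples, and only then (b) check that $Z_0$ satisfies \eqref{SHJB}. The route more commonly taken in this literature (and which you yourself mention as the alternative ``as actually done in \cite{FaGo2}'') reverses the order: one constructs a solution $\psi\in\Sigma_0(V')$ of \eqref{SHJB} by approximation/fixed-point arguments on regularized data, and only afterwards identifies $\psi$ with $Z_0$ via the verification step. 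The advantage of the latter ordering is that it sidesteps the most delicate part of your plan, namely step (iv), the global Lipschitz bound on $x\mapsto Z_0'(x)$ coming from ``Lipschitz stability of the optimal triple in $x$'': proving that the optimal control depends Lipschitz-continuously on the initial datum is itself a nontrivial result in the linear--convex setting with merely injective $\partial h_0$ (no uniform convexity of $h_0$), and it is cleaner to obtain $\psi'\in Lip$ from the fixed-point construction than to extract it from the optimality system. Your verification-for-uniqueness step and the handling of the chain rule along mild solutions via Yosida approximation are both standard and correctly identified.
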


Once we have established that $\Psi$ is the unique classical solution to the
stationary HJB equation, and {since $\Psi$ is Fr\'echet differentiable with
Lipschitz derivative}, we can build optimal feedbacks and prove the
following theorem.

\begin{theo}
\label{th:uniquefeedback}  {Let Assumptions \ref{asst2} and \ref{asst2bis}
hold.} Let $t\geq 0$ and $x\in V^{\prime }$ be fixed. Then there exists a
unique optimal pair $(u^{\ast },y^{\ast })$ at $(t,x)$. The optimal state $%
y^{\ast }$ is the unique solution of the Closed Loop Equation

\begin{equation}  \label{CLE}
\begin{cases}
y^{\prime }(\tau )=Ay(\tau )+B(h_{0}^{\ast })^{\prime }(-B^{\ast }\Psi
^{\prime }(y( \tau ))), & \tau >t \\
y(t)=x\in V^{\prime }, &
\end{cases}%
\end{equation}%
while the optimal control $u^{\ast }$ is given by the feedback formula
\begin{equation*}
u^{\ast }(s)=(h_{0}^{\ast })^{\prime }(-B^{\ast }\Psi ^{\prime }(y^{\ast
}(s))).
\end{equation*}
where the optimal feedback map $x\mapsto (h_{0}^{\ast })^{\prime } (-B^{\ast
}\Psi ^{\prime}(x))$ is Lipschitz continuous.
\end{theo}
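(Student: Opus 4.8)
The plan is to run a verification argument built on the unique classical solution $\Psi$ of the stationary Hamilton--Jacobi--Bellman equation (\ref{SHJB}) provided by Theorem \ref{th:mainnew}; recall that $\Psi\in\Sigma_0(V^\prime)$ and $\Psi=Z_0$. First I would analyse the closed-loop vector field
\[
F(x):=B\,(h_0^*)^\prime\!\big(-B^*\Psi^\prime(x)\big),\qquad x\in V^\prime,
\]
and show that it is globally Lipschitz from $V^\prime$ into $V^\prime$: indeed $\Psi^\prime\colon V^\prime\to V$ is Lipschitz since $\Psi\in C^1_{Lip}(V^\prime)$, the operators $B^*\in L(V,U)$ and $B\in L(U,V^\prime)$ are bounded, and $(h_0^*)^\prime\colon U\to U$ is Lipschitz because $h_0^*\in\Sigma_0(U)$ by Assumption \ref{asst2}(5). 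Composition then makes $F$, hence also the feedback map $x\mapsto (h_0^*)^\prime(-B^*\Psi^\prime(x))$, Lipschitz continuous, which already settles the last assertion of the statement. Since $A$ generates a strongly continuous semigroup of type $\omega$ and $F$ is globally Lipschitz, the Closed Loop Equation (\ref{CLE}) has a unique mild solution $y^*$ on $[t,+\infty)$, obtained by a contraction fixed point on small intervals and extended globally via a Gronwall estimate; one then sets $u^*(\tau):=(h_0^*)^\prime(-B^*\Psi^\prime(y^*(\tau)))$.

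Next I would establish the fundamental identity. Evaluating (\ref{SHJB}) at $x=y(\tau)$ along an arbitrary state--control pair $(y,u)$ with $u$ admissible, writing $p(\tau):=-B^*\Psi^\prime(y(\tau))$ and differentiating $\tau\mapsto e^{-\lambda\tau}\Psi(y(\tau))$ (justified by the regularity of $\Psi$ and of the mild solution), one obtains
\[
\frac{d}{d\tau}\big[e^{-\lambda\tau}\Psi(y(\tau))\big]=e^{-\lambda\tau}\big[-g_0(y(\tau))-h_0(u(\tau))+\Delta(\tau)\big],
\]
where $\Delta(\tau):=h_0^*(p(\tau))+h_0(u(\tau))-\langle p(\tau),u(\tau)\rangle_U\ge0$ by the Fenchel--Young inequality, with $\Delta(\tau)=0$ precisely when $p(\tau)\in\partial h_0(u(\tau))$, equivalently $u(\tau)=(h_0^*)^\prime(p(\tau))$ (here I use that $\partial h_0$ is injective, so by Remark \ref{coniugate} $h_0^*$ is differentiable with $(h_0^*)^\prime=(\partial h_0)^{-1}$). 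Integrating on $[t,T]$ gives
\[
e^{-\lambda t}\Psi(x)=e^{-\lambda T}\Psi(y(T))+\int_t^T e^{-\lambda\tau}\big[g_0(y(\tau))+h_0(u(\tau))\big]\,d\tau-\int_t^T e^{-\lambda\tau}\Delta(\tau)\,d\tau.
\]

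The hard part will be the transversality analysis as $T\to+\infty$, and this is exactly the place where Assumption \ref{asst2bis} enters. For the closed-loop trajectory $\Delta\equiv0$, so the identity reduces to $e^{-\lambda t}\Psi(x)=e^{-\lambda T}\Psi(y^*(T))+\int_t^T e^{-\lambda\tau}[g_0(y^*)+h_0(u^*)]\,d\tau$, and one must prove $e^{-\lambda T}\Psi(y^*(T))\to0$. I would get this by combining the a priori growth estimate for mild solutions of (\ref{CLE}) with the growth bound on $\Psi=Z_0$ (at most quadratic under Assumption \ref{asst2bis}(1); at most linear, i.e. $\Psi\in\mathcal{B}_1(V^\prime)$, under Assumption \ref{asst2bis}(2)) and with the a priori estimates recalled from \cite{FaGo2}. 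The identity then yields $J(t,x,u^*)=e^{-\lambda t}\Psi(x)<+\infty$, and $u^*\in L^p_\lambda(t,+\infty;U)$ follows from the coercivity $h_0(u)\ge a\vert u\vert_U^p+b$ (Assumption \ref{asst2}(6)) together with the affine lower bound for $g_0$, so $u^*$ is admissible. Conversely, for any admissible $u$ with $J(t,x,u)<+\infty$ the same estimates give $\liminf_{T\to+\infty}e^{-\lambda T}\Psi(y(T))\ge0$, while $\Delta\ge0$, so the identity forces $J(t,x,u)\ge e^{-\lambda t}\Psi(x)=J(t,x,u^*)$; hence $(u^*,y^*)$ is optimal at $(t,x)$.

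Finally, for uniqueness, if $(\hat u,\hat y)$ is another optimal pair then equality holds in the last inequality, which propagated through the fundamental identity forces $\int_t^{+\infty}e^{-\lambda\tau}\Delta(\tau)\,d\tau=0$, hence $\Delta(\tau)=0$ for a.e. $\tau$, i.e. $\hat u(\tau)=(h_0^*)^\prime(-B^*\Psi^\prime(\hat y(\tau)))$ a.e.; then $\hat y$ is a mild solution of the Closed Loop Equation (\ref{CLE}), and uniqueness of that solution gives $\hat y=y^*$ and $\hat u=u^*$. The feedback formula and the Lipschitz continuity of the feedback map are then immediate from Step~1.
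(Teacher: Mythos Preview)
The paper does not give a proof of this theorem in the body of the text; Section~\ref{DPKNOWN} opens by stating that the results there, including Theorems~\ref{th:mainnew} and \ref{th:uniquefeedback}, are recalled from \cite{FaGo2}, and Remark~\ref{rm:uniquefeedback} confirms that the proof is ``identical to that provided in \cite{FaGo2}.'' The only indication of strategy is the sentence just before the statement: once $\Psi$ is a classical solution with Lipschitz gradient, one builds optimal feedbacks. Your verification argument --- Lipschitz feedback map, global mild solution of (\ref{CLE}) by contraction/Gronwall, fundamental identity obtained by differentiating $\tau\mapsto e^{-\lambda\tau}\Psi(y(\tau))$ along trajectories, transversality via Assumption~\ref{asst2bis}, and uniqueness through the equality case in Fenchel--Young --- is precisely the standard route and is what the proof in \cite{FaGo2} does.

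One small slip worth noting. For an arbitrary admissible control, your identity gives
\[
\int_t^T e^{-\lambda\tau}\big[g_0(y)+h_0(u)\big]\,d\tau \;=\; e^{-\lambda t}\Psi(x)-e^{-\lambda T}\Psi(y(T))+\int_t^T e^{-\lambda\tau}\Delta(\tau)\,d\tau,
\]
so to conclude $J(t,x,u)\ge e^{-\lambda t}\Psi(x)$ you would need $\limsup_T e^{-\lambda T}\Psi(y(T))\le 0$, not the $\liminf\ge 0$ you wrote (the latter is the easy direction, from the affine lower bound on the convex function $\Psi$). However this step is redundant in your setting: since Theorem~\ref{th:mainnew} already identifies $\Psi=Z_0$, the inequality $J(t,x,u)\ge e^{-\lambda t}\Psi(x)$ is immediate from the definition of the value function, and you only genuinely need the transversality $e^{-\lambda T}\Psi(y(T))\to 0$ along trajectories with finite cost --- once for the closed-loop pair (to show it attains the infimum) and once for any other optimal pair (in the uniqueness step). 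With that adjustment your outline is complete and matches the paper's cited approach.
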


\begin{rem}
\label{rm:uniquefeedback} %{\bf(The quadratic case)}
There are relevant cases when Assumption \ref{asst2bis} is not satisfied.
One such example is the case, important for the applications, when costs $g_0
$ and $h_0$ are quadratic (or linear + quadratic). Nonetheless Theorems \ref%
{th:uniquefeedback} remains true, with identical proof to that provided in
\cite{FaGo2}, if the value function $Z_0$ is in $C^1_{Lip}(V^{\prime })$.
Indeed the regularity of $Z_0$ implies that $Z_0$ is a classical solution of
the associated HJB equation \eqref{SHJB} (to this extent see e.g. \cite%
{LiYong95}, ch. 6, Proposition 1.2, p. 225). In the case of quadratic costs,
for instance, one proves that $Z_0$ is itself quadratic, and hence in $%
C^1_{Lip}(V^{\prime })$. Note also that if $Z_0$ is not differentiable, then
the closed loop equation (\ref{CLE}) holds in the weaker sense of (\ref%
{eq:weakCLE}), as specified in the next section.\qn
\end{rem}

\subsection{Maximum Principle for Problem (P)}

\label{subsec:MP}

The results contained in this section, namely Theorems \ref{Pmp} and \ref%
{pi^*}, %and in the next
are new to literature and add to the theory developed in \cite%
{Fa1,Fa2,Fa3,FaGo,FaGo2}. They establish a Maximum Principle for the problem
at hand, and connect it to the results on Dynamic Programming contained in
those papers. The reader may find all of the proofs in the Appendix, as well
as some additional results.   We advise the reader that,
differently from \cite{FaGo2} and in view of Remark \ref{rm:uniquefeedback},
the new results are proved avoiding Assumption \ref{asst2bis}. As a
consequence, if on the one hand the regularity of the value function $Z_0$
of (P) does not necessarily hold true, on the other hand we are able to
treat the case of the limit exponent $p=2$, and hence of quadratic costs $g_0
$ and $h_0$, so important for the applications.

In order to establish a maximum principle, we first need to define a dual
system associated to the mimimization problem. For all fixed $x\in V^{\prime
}$ and $t\geq 0$, we consider the equation
\begin{equation}
\pi ^{\prime }(\tau )=(\lambda -A_{1}^{\ast })\pi (\tau )-g_{0}^{\prime
}(y(\tau )),\quad \ \tau \in \lbrack t,+\infty )  \label{cost}
\end{equation}%
where $\pi :[t,+\infty )\rightarrow V$ (the dual variable, or co-state of
the system) is the unknown, and $y=y(\cdot ;t,x,u)$ is the trajectory
starting at $x$ at time $t$ and driven by control $u$, given by (\ref%
{eq:statoV'}). We assume such equation is also subject to the following
transversality condition
\begin{equation}
\lim_{T\rightarrow +\infty }e^{\left( \omega -\lambda \right) T}\pi (T)=0.
\label{tc}
\end{equation}%
When necessary, we denote any solution of (\ref{cost})(\ref{tc}) also by $%
\pi (\cdot ;t,x,u)$ or by $\pi (\cdot ;t,x)$ to remark its dependence on the
data.

Heuristically speaking, the candidate conditions of optimality associated to
the problem are the following:

\begin{equation}
\begin{cases}
y^{\prime }(\tau )=Ay(\tau )+Bu(\tau ), & \tau \geq t \\
y(t)=x &  \\
\pi ^{\prime }(\tau )=(\lambda -A_{1}^{\ast })\pi (\tau )-g_{0}^{\prime
}(y(\tau )), & \tau \geq t \\
\displaystyle\lim_{T\rightarrow +\infty }e^{(\lambda -\omega )T}\pi (T)=0, &
\\
-B^{\ast }\pi (\tau )\in \partial h_{0}(u(\tau )),\  & \tau \geq t.%
\end{cases}
\label{CO}
\end{equation}
{The ODEs for $y$ and $\pi$ appearing in \eqref{CO} are intended, as it is
usual in these cases, in \emph{mild} sense, see Definition \ref{df:mild} in
Appendix A.} Moreover, by conjugation formula, we have
\begin{equation}
-B^{\ast }\pi (\tau )\in \partial h_{0}(u(\tau ))\iff u(\tau )=
(h_{0}^{\ast})^{\prime }(-B^{\ast }\pi (\tau )).  \label{mp2}
\end{equation}%
We refer to (\ref{mp2}) as to \textit{maximum condition}. It has to be
satisfied for a.a. $\tau \geq t$.

 The conditions listed in (\ref{CO}) prove to be necessary and
sufficient for optimality for all $p\ge2$, in the sense specified next.

\begin{theo}
\label{Pmp} \emph{\textbf{(Maximum Principle).}} Let Assumptions \ref{asst2}
%and \ref{asst2bis}
be satisfied. Let $p\ge2,$ $q=\frac{p}{p-1}$ and $\lambda >(2\omega)\vee
\omega $. Let $t\ge 0$, $x \in V^{\prime }$.

\begin{itemize}
\item[$(i)$] Let $(u,y)\in L^p_\lambda(t,+\infty;U)\times
L^1_{loc}(t,+\infty;V^\prime)$ be a given admissible pair at $(t,x)$. If
there exists a function $\pi\in L^q_\lambda(t,+\infty;V)$ satisfying, along
with $u$ and $y$, the system $(\ref{CO}),$ then $(u,y)$ is optimal at $(t,x)$
for the problem of minimizing $(\ref{eq:statoV'})(\ref{J in H})$.

\item[$(ii)$] Assume further that, either $p>2$ and $\lambda>0$, or $p=2$, $%
\lambda>0$ and $\omega<0$. Then the viceversa of (i) holds, i.e., any couple
$(u^*,y^*)$ optimal at $(t,x)$ necessarily admits a costate $\pi\in
L^q_\lambda(t,+\infty;V)$ satisfying, along with $u^*$ and $y^*$, system $(%
\ref{CO}).$ %
%\item[$(iii)$] When $p=2$, any optimal couple $(u^*,y^*)$ at $(t,x)\in
%\mathbb{R}_+ \times H$ which is also constant in time, necessarily
%admits a constant costate $\pi\in L^q_\lambda(t,+\infty;V)$
%satisfying, along with $u^*$ and $y^*$, the system (\ref{CO}).
\end{itemize}
\end{theo}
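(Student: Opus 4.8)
I would run the classical convex argument. Fix $(u,y)$ and a costate $\pi\in L^q_\lambda(t,+\infty;V)$ satisfying $(\ref{CO})$, take an arbitrary competing admissible pair $(\tilde u,\tilde y)$ at $(t,x)$ --- we may assume $J(t,x,\tilde u)<+\infty$ --- and set $v:=\tilde u-u$, $w:=\tilde y-y$, so that $w$ is the mild solution of $w'=Aw+Bv$, $w(t)=0$. Pointwise convexity of $g_0$ and $h_0$ together with the maximum condition $-B^*\pi(\tau)\in\partial h_0(u(\tau))$ give, after integration on $[t,T]$ and letting $T\to+\infty$ on the cost side,
\begin{equation*}
J(t,x,\tilde u)-J(t,x,u)\ \ge\ \limsup_{T\to+\infty}\int_t^{T}e^{-\lambda\tau}\big[\langle g_0'(y(\tau)),w(\tau)\rangle-\langle B^*\pi(\tau),v(\tau)\rangle_U\big]\,d\tau .
\end{equation*}
On the other hand, differentiating $\tau\mapsto e^{-\lambda\tau}\langle\pi(\tau),w(\tau)\rangle$ via the equations for $w$ and $\pi$ in $(\ref{CO})$ and using that $A_1^*$ is the adjoint of $A$ in the $V,V'$ duality, the unbounded-operator terms cancel and the integrand above turns out to be exactly $-\frac{d}{d\tau}\big(e^{-\lambda\tau}\langle\pi(\tau),w(\tau)\rangle\big)$. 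As $w,\pi$ are only mild solutions this identity must be justified by approximation (Yosida regularisation of $A$, or mollification in time); that and the companion integrability bookkeeping belong in the Appendix. Since $w(t)=0$, the integral then equals $-e^{-\lambda T}\langle\pi(T),w(T)\rangle$, so $(i)$ is reduced to showing $\limsup_{T\to+\infty}\big(-e^{-\lambda T}\langle\pi(T),w(T)\rangle\big)\ge 0$.

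\textbf{Boundary term and conclusion of $(i)$.} To that end I would estimate the growth of $|w(T)|_{V'}$ from the variation-of-constants formula $w(T)=\int_t^T e^{(T-\sigma)A}Bv(\sigma)\,d\sigma$, the bound $\|e^{\tau A}\|_{\mathcal L(V')}\le e^{\omega\tau}$ and Hölder's inequality (using $v\in L^p_\lambda$), and then combine this with $\pi\in L^q_\lambda$ and the transversality condition $(\ref{tc})$. Exploiting $\lambda>(2\omega)\vee\omega$, one obtains $e^{-\lambda T_n}\langle\pi(T_n),w(T_n)\rangle\to 0$ along some sequence $T_n\to+\infty$; hence the $\limsup$ is $\ge 0$, so $J(t,x,\tilde u)\ge J(t,x,u)$, and, $(\tilde u,\tilde y)$ being arbitrary, $(u,y)$ is optimal at $(t,x)$.

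\textbf{Necessity $(ii)$.} Let $(u^*,y^*)$ be optimal at $(t,x)$; we may assume $V(t,x)<+\infty$, so the coercivity bound in Assumption~\ref{asst2}(6) forces $u^*\in L^p_\lambda$, and the variation-of-constants formula gives an exponential growth bound for $|y^*(\cdot)|_{V'}$. The plan is to construct the costate explicitly as
\begin{equation*}
\pi(\tau):=\int_\tau^{+\infty}e^{\lambda(\tau-\sigma)}\,e^{(\sigma-\tau)A_1^*}\,g_0'(y^*(\sigma))\,d\sigma ,
\end{equation*}
which is precisely the backward mild solution of $(\ref{cost})$ with the decay $(\ref{tc})$ built in, and to verify it works. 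Using $\|e^{\tau A_1^*}\|_{\mathcal L(V)}\le e^{\omega\tau}$, the linear growth of $g_0'$ (Assumption~\ref{asst2}(3)), the growth bound on $y^*$, the inequality $\lambda>(2\omega)\vee\omega$, and the extra hypotheses ($\lambda>0$, and $\omega<0$ when $p=2$), I would check by exponential bookkeeping that the integral converges in $V$, that $\pi$ is a mild solution of $(\ref{cost})$ satisfying $(\ref{tc})$, and that $\pi\in L^q_\lambda(t,+\infty;V)$. (In the subcase $p>2$, Assumption~\ref{asst2bis}(1) holds and $\pi$ can alternatively be read off from the Dynamic Programming results of Section~\ref{DPKNOWN}; the direct construction is meant to also cover $p=2$, where the value function need not be differentiable.) What remains is the maximum condition $-B^*\pi(\tau)\in\partial h_0(u^*(\tau))$ a.e., i.e.\ that for a.e.\ $\tau$ the point $u^*(\tau)$ minimises $w\mapsto h_0(w)+\langle B^*\pi(\tau),w\rangle_U$ on $U$.

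\textbf{The maximum condition, and the main obstacle.} This last step is the crux, and the only place where optimality of $(u^*,y^*)$ is actually used. I would argue by contradiction: if the minimality just stated failed on a set of positive measure, then --- after a Lusin-type reduction --- there would be $\delta>0$, a \emph{bounded} set $E\subset[t,+\infty)$ of positive measure, and a bounded measurable $\phi:E\to U$ with $h_0(\phi(\tau))+\langle B^*\pi(\tau),\phi(\tau)\rangle_U\le h_0(u^*(\tau))+\langle B^*\pi(\tau),u^*(\tau)\rangle_U-\delta$ on $E$. For $\varepsilon\in(0,1)$ take the admissible variation $u_\varepsilon:=u^*+\varepsilon(\phi-u^*)\mathbf 1_E$, with trajectory $y_\varepsilon=y^*+\varepsilon w$, where $w$ solves $w'=Aw+B(\phi-u^*)\mathbf 1_E$, $w(t)=0$; since the forcing is compactly supported, $w$ grows eventually at most like $e^{\omega\tau}$, so the boundary term $e^{-\lambda T}\langle\pi(T),w(T)\rangle$ vanishes at once by $(\ref{tc})$. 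Combining convexity of $h_0$ on $E$ with the second-order estimate $g_0(y^*+\varepsilon w)-g_0(y^*)\le\varepsilon\langle g_0'(y^*),w\rangle+\frac{\varepsilon^2}{2}[g_0']_L\,|w|^2_{V'}$ (valid since $g_0'$ is Lipschitz) and the integration-by-parts identity of part $(i)$, one arrives at $J(t,x,u_\varepsilon)-J(t,x,u^*)\le -\varepsilon\,\delta\!\int_E e^{-\lambda\tau}\,d\tau+O(\varepsilon^2)<0$ for $\varepsilon$ small, contradicting optimality. This establishes $(\ref{CO})$ for $(u^*,y^*,\pi)$ and completes $(ii)$. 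I expect the genuinely delicate points to be (a) the rigorous treatment of the mild-solution integration-by-parts identity and of the vanishing of the boundary term at $+\infty$ --- which is exactly where the interplay of $\lambda$, $\omega$ and $p$ (and, in $(ii)$, the hypotheses rescuing $p=2$ without the Dynamic Programming regularity of Section~\ref{DPKNOWN}) is consumed --- and (b) the convergence and $L^q_\lambda$-summability of the costate $\pi$ constructed above.
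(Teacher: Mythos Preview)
Your approach is correct in outline but takes a genuinely different route from the paper. The paper avoids integration by parts and the boundary-term analysis altogether by casting everything in subdifferential calculus: it writes $J=K+G$ with $K(u)=\int e^{-\lambda\tau}h_0(u)\,d\tau$ and $G(u)=\int e^{-\lambda\tau}g_0(y(\cdot;u))\,d\tau$, first characterises $\partial K(u)$ pointwise, and then --- this is the key simplification --- shows $B^*\pi\in\partial G(u)$ directly by inserting the mild formula $y(\tau;v)-y(\tau;u)=\int_t^\tau e^{(\tau-\sigma)A}B(v-u)(\sigma)\,d\sigma$ into the convexity inequality for $G$ and applying Fubini (justified precisely by $\pi\in L^q_\lambda$). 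There is no differentiation of $\langle\pi,w\rangle$, no Yosida regularisation, no boundary term; $0\in\partial K(u)+\partial G(u)\subseteq\partial J(u)$ gives optimality. For necessity the paper proceeds symmetrically: it proves $G$ is G\^ateaux differentiable at $u^*$ with $G'(u^*)=B^*\pi^*$ (again via the mild formulas and a direct $\varepsilon$-expansion, where the hypotheses on $p,\lambda,\omega$ are consumed purely in integrability estimates for the remainder), so that $\partial J(u^*)=B^*\pi^*+\partial K(u^*)$ holds with equality; then $0\in\partial J(u^*)$ yields $-B^*\pi^*\in\partial K(u^*)$, i.e.\ the maximum condition a.e. Your integration-by-parts plus needle-variation route is the textbook Pontryagin argument and does go through, but the mild-solution approximation and the boundary-term control you flag as ``genuinely delicate'' are real extra work that the paper's Fubini/subdifferential approach sidesteps entirely; conversely, your method would transfer more readily to a nonconvex setting, which the paper's subdifferential argument would not.
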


%For the proof, we refer the reader to the appendix at the end of the paper.

The next theorem containes the so-called \emph{co-state inclusion}. Note
that the case of $p=2$ is discussed separately, as the value function is not
necessarily Fr\'echet differentiable (unless Assumption \ref{asst2bis} holds
or ad hoc regularity results are given).

\begin{theo}
\label{pi^*} \emph{\textbf{(Co-state inclusion).}} In Assumptions \ref{asst2}%
, for $\lambda>\max\{0,\omega,2\omega\}$, suppose that either $p>2$, or $p=2$
and $\omega<0$. Let $(u^*,y^*)$ be optimal at $(t,x)\in [0,+\infty)\times
V^{\prime }$, and let $\pi^*(\cdot;t,x)\in L^q_\lambda(t,+\infty;V)$ be the
associated co-state. Let also $Z_0$ be the value function of problem (P).
Then
\begin{equation*}
\pi^*(\tau;t,x)=\pi^*(\tau;\tau,y^*(\tau)) \in \partial Z_0(y^*(\tau)),
\qquad \forall \tau\ge t.
\end{equation*}
where $\partial Z_0$ is the subdifferential of the convex function $Z_0$. If
in addition $p>2$, then $Z_0\in\Sigma_0(V^{\prime })$ and $Z_0$ coincides
with $\Psi$, so that
\begin{equation*}
\pi^*(\tau;t,x)=\pi^*(\tau;\tau,y^*(\tau))=\Psi^\prime(y^*(\tau)), \qquad
\forall \tau\ge t.
\end{equation*}
\end{theo}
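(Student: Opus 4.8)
**Proof plan for the co-state inclusion (Theorem \ref{pi^*}).**

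The plan is to deduce the inclusion from the two preceding results: the Maximum Principle (Theorem \ref{Pmp}) and the Dynamic Programming characterization (Theorem \ref{th:mainnew} and Theorem \ref{th:uniquefeedback}), using the convexity of the value function as the conceptual bridge. First I would establish the \emph{time-consistency} identity $\pi^*(\tau;t,x)=\pi^*(\tau;\tau,y^*(\tau))$ for every $\tau\ge t$. This should follow from uniqueness: by Theorem \ref{Pmp}(ii) any optimal pair admits a co-state in $L^q_\lambda$, and the restriction of $(u^*,y^*,\pi^*)$ to $[\tau,+\infty)$ still satisfies system \eqref{CO} with initial datum $y^*(\tau)$ at time $\tau$ — here one checks that the mild-form equation for $\pi$ and the transversality condition \eqref{tc} are unaffected by shifting the initial time, since \eqref{tc} is a condition at $T\to+\infty$. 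Combined with the Bellman principle (the tail of an optimal trajectory is optimal for the problem started at $y^*(\tau)$) and, where needed, uniqueness of the co-state solving \eqref{CO}\eqref{tc} for a fixed admissible pair, this yields the identity.

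Next I would prove the membership $\pi^*(\tau;\tau,y^*(\tau))\in\partial Z_0(y^*(\tau))$. By time-consistency it suffices to treat $\tau=t$, i.e.\ to show $\pi^*(t;t,x)\in\partial Z_0(x)$ for arbitrary $x\in V'$. Fix $\tilde x\in V'$ and let $(\tilde u,\tilde y)$ be optimal at $(t,\tilde x)$. Writing $y=y(\cdot;t,x,u^*)$ and $\tilde y=y(\cdot;t,\tilde x,\tilde u)$, I would compare $J(t,\tilde x,\tilde u)-J(t,x,u^*)$ from below. The standard device is to differentiate $\tau\mapsto e^{-\lambda\tau}\langle\pi^*(\tau),\tilde y(\tau)-y^*(\tau)\rangle$ (in mild/weak form), using the dual equation \eqref{cost} for $\pi^*$, the state equations for $\tilde y$ and $y^*$, and the fact that $A_1^*$ is the $\langle\cdot,\cdot\rangle$-adjoint of $A$ so the $A$-terms cancel against the $A_1^*$-terms. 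The boundary term at $T\to+\infty$ vanishes by the transversality condition \eqref{tc} together with the $\omega$-growth bound on $e^{\tau A}$ in Assumption \ref{asst2}(1) and the integrability $\pi^*\in L^q_\lambda$, $\tilde y-y^*$ having controlled growth; the boundary term at $\tau=t$ is $\langle\pi^*(t),\tilde x-x\rangle$. Using convexity of $g_0$ (so $g_0(\tilde y(\tau))-g_0(y^*(\tau))\ge\langle g_0'(y^*(\tau)),\tilde y(\tau)-y^*(\tau)\rangle$) and the maximum condition $-B^*\pi^*(\tau)\in\partial h_0(u^*(\tau))$ (so $h_0(\tilde u(\tau))-h_0(u^*(\tau))\ge\langle-B^*\pi^*(\tau),\tilde u(\tau)-u^*(\tau)\rangle$), all the integrand cross-terms assemble, after the cancellation, into exactly the telescoping derivative above. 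This gives
\[
Z_0(\tilde x)-Z_0(x)\le J(t,\tilde x,\tilde u)-J(t,x,u^*)=\cdots\ge \langle\pi^*(t),\tilde x-x\rangle,
\]
wait — one must be careful with the direction: $J(t,x,u^*)=Z_0(x)$ by optimality, $J(t,\tilde x,\tilde u)=Z_0(\tilde x)$, and the inequality chain is arranged so that $Z_0(\tilde x)-Z_0(x)\ge\langle\pi^*(t),\tilde x-x\rangle$, which is precisely $\pi^*(t)\in\partial Z_0(x)$. (One may equally run the comparison with a \emph{non}-optimal competitor $\tilde u$ at $\tilde x$, getting $J(t,\tilde x,\tilde u)-Z_0(x)\ge\langle\pi^*(t),\tilde x-x\rangle$ and then taking the infimum over $\tilde u$.)

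Finally, for the case $p>2$, Assumption \ref{asst2bis}(1) holds (since $\lambda>(2\omega)\vee\omega$ is assumed), so Theorem \ref{th:mainnew} applies: $\Psi=Z_0\in\Sigma_0(V')$ is a classical solution of \eqref{SHJB}, in particular $Z_0$ is Fréchet differentiable with Lipschitz derivative, whence $\partial Z_0(y^*(\tau))=\{Z_0'(y^*(\tau))\}=\{\Psi'(y^*(\tau))\}$ and the inclusion becomes the stated equality $\pi^*(\tau;t,x)=\Psi'(y^*(\tau))$. \textbf{The main obstacle} I anticipate is the rigorous justification of the integration-by-parts / telescoping step in infinite dimension with unbounded $A$, $A_1^*$ and a boundary control operator $B\notin\mathcal L(U,H)$: one has only mild solutions, $\pi^*$ lives in $V$ while $y^*$ lives in $V'$, and the pairing $\langle\pi^*(\tau),\tilde y(\tau)-y^*(\tau)\rangle$ must be shown absolutely continuous with the expected derivative — this requires an approximation argument (e.g.\ Yosida approximants of $A$, or smooth approximation of controls, or working first on finite horizons $[t,T]$ and passing to the limit using \eqref{tc}), and the vanishing of the terminal boundary term is where the precise interplay between $\lambda$, $\omega$, and the exponents $p,q$ is used. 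The separate treatment of $p=2$ (with only $\partial Z_0$, not $Z_0'$) is handled by keeping the subdifferential formulation throughout, which is why the theorem is phrased with $\partial Z_0$ in general.
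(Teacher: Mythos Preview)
Your plan is correct and follows the same conceptual strategy as the paper: establish the subdifferential membership at the initial time via a convexity comparison, then propagate to all $\tau\ge t$ by Bellman's principle. The order is reversed (the paper does the subdifferential step first, then time-consistency), but that is immaterial.

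The substantive difference is in how the key estimate $Z_0(\tilde x)-Z_0(x)\ge\langle\pi^*(t),\tilde x-x\rangle$ is obtained. You propose differentiating the pairing $\tau\mapsto e^{-\lambda\tau}\langle\pi^*(\tau),\tilde y(\tau)-y^*(\tau)\rangle$ and integrating back, and you rightly flag the regularity obstacle this creates (mild solutions only, unbounded $A$ and $A_1^*$, $B\notin\mathcal L(U,H)$), to be handled by Yosida approximation or the like. The paper sidesteps this entirely: it substitutes the mild formula
\[
\tilde y(\tau)-y^*(\tau)=e^{(\tau-t)A}(\tilde x-x)+\int_t^\tau e^{(\tau-\sigma)A}B(\tilde u(\sigma)-u^*(\sigma))\,d\sigma
\]
directly into $\int_t^\infty\langle g_0'(y^*(\tau)),\tilde y(\tau)-y^*(\tau)\rangle e^{-\lambda(\tau-t)}d\tau$, applies Fubini (legitimate precisely because $\pi^*\in L^q_\lambda$), and reads off $\langle\pi^*(t;t,x),\tilde x-x\rangle$ plus a $B^*\pi^*$ term that cancels exactly against the $h_0$ contribution coming from the maximum condition. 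It is the same computation, but carried out at the integral level, so no differentiation of the pairing---and hence no approximation argument or vanishing-boundary-term analysis---is required. Your ``main obstacle'' simply does not arise on this route.

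Similarly, for the time-consistency identity the paper does not invoke uniqueness of the co-state: it computes $\pi^*(0;0,y^*(\tau))$ from the explicit mild formula \eqref{pmild} using the shifted optimal control $\sigma\mapsto u^*(\sigma+\tau)$, and a change of variable shows this equals $\pi^*(\tau;t,x)$ directly.
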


\begin{rem}
\label{rm:weakCLE} Note that, for $p\ge2$ and $\lambda>0$, and by making use
of Theorem \ref{Pmp}, and of equations \eqref{CO} and \eqref{mp2}, one
obtains
\begin{equation*}
y^{\prime }(\tau )=Ay(\tau ) +B(h_{0}^{\ast})^{\prime }(-B^{\ast }\pi (\tau
)), \qquad \tau \geq t
\end{equation*}
so that the general version of the closed loop equation \eqref{CLE} becomes
a differential inclusion
\begin{equation}  \label{eq:weakCLE}
y^{\prime }(\tau )\in Ay(\tau ) +B(h_{0}^{\ast})^{\prime }(-B^{\ast
}\partial Z_0(y(\tau))), \qquad \tau \geq t.
\end{equation}
also to be intended in mild sense.\qn
\end{rem}

\color{black}

\subsection{Equilibrium points}

\label{SS:EQPT} We give two different definitions of equilibrium points for
problems (P), and later show to which extent they are equivalent.

%\red{We recall that $\Psi_0(x):=Z_0(x)$ denotes
%the value function of problem $(P)$ when $t=0$.}

\begin{defi}
\label{def:eq} A \emph{MP-equilibrium point} of problem $(P)$ is any
stationary solution $\left( x,\pi,u \right) \in V^{\prime }\times V\times U$
of $(\ref{CO})$. This is equivalent to require that $\left( x,\pi,u \right)$
belongs to $D(A)\times D(A_1^*)\times U$ and satisfies
\begin{equation}  \label{def:MPEP}
\begin{cases}
Ax+Bu=0 \\
(\lambda -A_{1}^{\ast })\pi -g_{0}^{\prime }(x)=0, \\
u=(h_{0}^{\ast })^{\prime }(-B^{\ast }\pi )).%
\end{cases}%
\end{equation}
 A \emph{CLE-equilibrium point} of problem $(P)$ is any $x\in
V^{\prime }$ that is a stationary solution of the closed loop equation
\eqref{eq:weakCLE}. This is equivalent to require $x\in D(A)$ and
\begin{equation}  \label{equili1newweak}
Ax+B(h_{0}^{\ast })^{\prime }(-B^{\ast }\partial Z_0 (x))\ni 0.
\end{equation}%

%When the value function $\Psi_0$ is Fr\'echet differentiable at $x$, \eqref{equili1newweak}
%becomes
%\begin{equation}\label{equili1new}
%Ax+B(h_{0}^{\ast })^{\prime }(-B^{\ast }\Psi_0' (x))= 0.
%\end{equation}
%This holds, in particular, when $p>2$, so $\Psi_0=\Psi\in \Sigma_0(V')$.
\end{defi}

\begin{rem}
When $0\in \rho(A)$ and $\lambda\in \rho(A_1^*)$, then (\ref{def:MPEP}) is
equivalent to
\begin{equation}  \label{eq:PMP-1}
\begin{cases}
x=-A^{-1}Bu \\
\pi =(\lambda -A_{1}^{\ast })^{-1}g_{0}^{\prime }(x), \\
u=(h_{0}^{\ast })^{\prime }(-B^{\ast }\pi )).%
\end{cases}%
\end{equation}
and %When $0\in \rho(A)$, then (\ref{equili1new}) is equivalent to
%\begin{equation}\label{equili1}
%x=-A^{-1}B(h_{0}^{\ast })^{\prime }(-B^{\ast }\Psi ^{\prime }(x)).
%\end{equation}
 (\ref{equili1newweak}) is equivalent to
\begin{equation}  \label{equili1weak}
x\in-A^{-1}B(h_{0}^{\ast })^{\prime }(-B^{\ast }\partial Z_0(x)).
\end{equation}
 As a consequence of Remark \ref{rm:uniquefeedback}, the equations
\eqref{eq:weakCLE}, \eqref{equili1newweak} and \eqref{equili1weak} hold as
equalities with $\Psi^\prime(x)$ in place of $\partial Z_0(x)$ when $Z_0$
Fr\'echet differentiable in $V^{\prime }$ (e.g. when $p>2$, or when regularity can be proven separately).\qn
\end{rem}

 The proof of the equivalences in the above definition is
straightforward as they are based on standard regularity of convolutions of
semigroups. We omit them for brevity.

\bigskip

We have the following result.

{\color{blue} }

\begin{theo}
\label{th:equiv} Let Assumptions \ref{asst2} be satisfied, $p\ge2$, $%
\lambda>(2\omega)\vee\omega$.

\begin{itemize}
\item[$(i)$] Let $(\bar x,\bar \pi, \bar u)\in D(A) \times D(A_1^*)\times U$
be any MP-equilibrium point. Then the constant control $\bar u$ is optimal
at $(0,\bar x)$ and
\begin{equation}  \label{eqlts}
A\bar x+B(h_0^*)^\prime(-B^*(\lambda-A_1^*)^{-1}g_0^\prime(\bar x))=0,
\end{equation}
moreover $\bar x$ is a CLE-equilibrium point and
\begin{equation}  \label{eq:MPisCLEweak}
\partial Z_0 (\bar x)\ni (\lambda-A_1^*)^{-1}g_0^\prime(\bar x).
\end{equation}
%If, furthermore, if $\Psi_0$ is Fr\'echet differentiable at $\bar x$, then
%\begin{equation}\label{eq:MPisCLE}
%\Psi_0^\prime(\bar x) = (\lambda-A_1^*)^{-1}g_0^\prime(\bar x).
%\end{equation}

\item[$(ii)$] Let $\hat x\in D(A)$ be a CLE-equilibrium point, $\lambda>0$.
Let either $p>2$, or $p=2$ and $\omega<0$. Assume that $Z_0 $ is Fr\'echet
differentiable in $V^{\prime }$. Then $(\hat x,\hat \pi, \hat u)$, where
\begin{equation*}
\hat \pi:=(\lambda-A_1^*)^{-1}g_0^\prime(\hat x) \quad and \quad \hat
u:=(h_{0}^{\ast })^{\prime }(-B^{\ast }\hat{\pi}),
\end{equation*}
is an MP-equilibrium point, the control $\hat u$ is optimal at $(0,{\color{blue}\hat x})$ and $%
Z_0^\prime(\hat x)=\hat \pi = (\lambda-A_1^*)^{-1}g_0^\prime(\hat x)$.
\end{itemize}
\end{theo}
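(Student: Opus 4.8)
The strategy is to interpret a stationary solution of the optimality system as a constant-in-time admissible process and to invoke the two tools already at our disposal: the Maximum Principle (Theorem \ref{Pmp}, both directions) and the co-state inclusion (Theorem \ref{pi^*}). I would first record two elementary facts used throughout: since $\lambda>\omega$, the operator $\lambda-A_1^*$ is boundedly invertible on $V$, so $(\lambda-A_1^*)^{-1}g_0'(\cdot)$ makes sense; and constant functions belong to the weighted spaces $L^p_\lambda(0,+\infty;U)$ and $L^q_\lambda(0,+\infty;V)$ when $\lambda>0$ (this is the point where positivity of $\lambda$ enters).

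\emph{Part $(i)$.} Given an MP-equilibrium $(\bar x,\bar\pi,\bar u)$, i.e. a solution of \eqref{def:MPEP}, I would check that the constant triple $y(\tau)\equiv\bar x$, $\pi(\tau)\equiv\bar\pi$, $u(\tau)\equiv\bar u$ is a mild solution of the full system \eqref{CO}: for constant $y,\pi$ the two evolution equations reduce exactly to the first two lines of \eqref{def:MPEP} (so their right-hand sides vanish), the transversality condition holds because $\pi$ is constant and $\lambda>\omega$, and the maximum condition is the third line of \eqref{def:MPEP} rewritten through \eqref{mp2}. Theorem \ref{Pmp}$(i)$ then gives at once that $\bar u$ is optimal at $(0,\bar x)$. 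Identity \eqref{eqlts} is algebra: the second line of \eqref{def:MPEP} gives $\bar\pi=(\lambda-A_1^*)^{-1}g_0'(\bar x)$, which substituted into the third line and then into $A\bar x+B\bar u=0$ yields \eqref{eqlts}. Finally, applying the co-state inclusion (Theorem \ref{pi^*}) to the optimal pair $(\bar u,\bar x)$ with its constant co-state $\bar\pi$ gives, at $\tau=0$, that $\bar\pi\in\partial Z_0(\bar x)$, which is \eqref{eq:MPisCLEweak}; combined with $A\bar x+B(h_0^*)'(-B^*\bar\pi)=0$ this shows $A\bar x+B(h_0^*)'(-B^*\partial Z_0(\bar x))\ni 0$, i.e. $\bar x$ solves \eqref{equili1newweak} and is a CLE-equilibrium point. (Equivalently, $\bar\pi\in\partial Z_0(\bar x)$ can be produced directly, bounding $g_0$ and $h_0$ along an arbitrary admissible trajectory $y$ from $z$ by their affine minorants at $\bar x$, $\bar u$ with slope $-B^*\bar\pi\in\partial h_0(\bar u)$ and integrating $\frac{d}{d\tau}[e^{-\lambda\tau}\langle\bar\pi,y(\tau)-\bar x\rangle]$, using $(\lambda-A_1^*)\bar\pi=g_0'(\bar x)$, to get $J(0,z,v)\ge Z_0(\bar x)+\langle\bar\pi,z-\bar x\rangle$.)

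\emph{Part $(ii)$.} Let $\hat x\in D(A)$ be a CLE-equilibrium with $Z_0$ Fr\'echet differentiable (a regularity that is automatic when $p>2$, where $Z_0\in\Sigma_0(V')$ and $Z_0=\Psi$ by Theorem \ref{pi^*}); then \eqref{equili1newweak} reads $A\hat x+B(h_0^*)'(-B^*Z_0'(\hat x))=0$. Hence the constant function $y\equiv\hat x$ solves the closed loop equation \eqref{CLE}, and by uniqueness of its solutions (Theorem \ref{th:uniquefeedback}, in the form of Remark \ref{rm:uniquefeedback}) the optimal trajectory and control from $(0,\hat x)$ are the constants $y^*\equiv\hat x$ and $u^*\equiv(h_0^*)'(-B^*Z_0'(\hat x))$. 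Applying Theorem \ref{pi^*} to this optimal pair produces a co-state $\pi^*\in L^q_\lambda(0,+\infty;V)$ that is simultaneously a mild solution of \eqref{cost} and equal to $Z_0'(y^*(\cdot))\equiv Z_0'(\hat x)$; being constant, it forces $Z_0'(\hat x)\in D(A_1^*)$ and $(\lambda-A_1^*)Z_0'(\hat x)=g_0'(\hat x)$, i.e. $Z_0'(\hat x)=(\lambda-A_1^*)^{-1}g_0'(\hat x)=\hat\pi$. Therefore $\hat u=(h_0^*)'(-B^*\hat\pi)=u^*$ is optimal at $(0,\hat x)$, and the triple $(\hat x,\hat\pi,\hat u)$ satisfies the three relations in \eqref{def:MPEP} — the first since it is just $A\hat x+Bu^*=0$, the second by the identity just obtained, the third by definition of $\hat u$ — so it is an MP-equilibrium point, with $Z_0'(\hat x)=\hat\pi$ as claimed.

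The substantial analysis has already been discharged into Theorems \ref{Pmp} and \ref{pi^*}; what is left is a matter of care rather than of new ideas. The two delicate points are: making sure that a stationary solution of \eqref{CO} really defines an \emph{admissible} constant pair and that a constant function qualifies as a mild solution of the co-state equation \eqref{cost} — which forces one to keep track of the sign of $\lambda$, of the weighted spaces, and of exactly which sharper hypothesis ($p>2$ versus $p=2$, sign of $\omega$) each invocation of Theorems \ref{Pmp} and \ref{pi^*} actually requires — and, in part $(ii)$, the structural step of first identifying the optimal trajectory issued from $\hat x$ as the constant one via uniqueness for the closed loop equation, since only then can the co-state inclusion be used to pin down $\hat\pi=Z_0'(\hat x)$ and close the argument.
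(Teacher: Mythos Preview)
Your proposal is correct and follows essentially the same route as the paper: for $(i)$ both arguments substitute $\bar\pi=(\lambda-A_1^*)^{-1}g_0'(\bar x)$ to obtain \eqref{eqlts} and then invoke the co-state inclusion (Theorem \ref{pi^*}) to get \eqref{eq:MPisCLEweak}; for $(ii)$ both arguments first use Theorem \ref{th:uniquefeedback}/Remark \ref{rm:uniquefeedback} to certify that the constant pair $(\hat x,\hat u)$ is optimal, then produce the stationary co-state via the Maximum Principle machinery and identify it with $Z_0'(\hat x)$ through Theorem \ref{pi^*}. Your treatment is in fact more careful than the paper's about tracking where $\lambda>0$ and the sharper hypotheses of Theorem \ref{pi^*} are actually needed (the paper's proof of $(i)$ silently uses Theorem \ref{pi^*} without restating those conditions), and your parenthetical direct argument for $\bar\pi\in\partial Z_0(\bar x)$ is a useful fallback precisely when those extra hypotheses are not available.
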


%The proof of this fact can be found in the Appendix.

\medskip

One important consequence of the above theorem is that it provides the
following equation {for} a CLE-equilibrium point (or for the first component
of an MP-equilibrium point)
\begin{equation}  \label{eq:eqptx}
A x+B(h_0^*)^\prime(-B^*(\lambda-A_1^*)^{-1}g_0^\prime(x))=0.
\end{equation}
In addition, whenever $0 \in \rho(A)$ (this assumption is satisfied in the
optimal investment problem with vintage capital described in Section 2)
solutions of \eqref{eq:eqptx} can be regarded as fixed points of the
operator $T:V^\prime\to V^\prime$, defined by
\begin{equation}  \label{Tlambda}
Tx:=-A^{-1}B(h_0^*)^\prime(-B^*(\lambda-A_1^*)^{-1}g_0^\prime( x)).
\end{equation}
For the applications, the most efficient way of making use of such relations
is to rewrite them in terms of the specific sets of data, and compute when
possible the optimal equilibrium {distributions}. In particular, in Section %
\ref{sec:avc} we will see how (\ref{Tlambda}) is interpreted in terms of the
data of optimal investment with vintage capital, so that fixed points of $T$
may be directly computed by solving a numeric equation.

However, in the general case, it is possible to provide sufficient
conditions for the existence and uniqueness of a fixed point of the operator
$T$ using well known fixed point theorems although, as one expects, such
conditions may hardly be very sharp. To this extent, we provide here only
Lemma \ref{fixedpoint}, which is a straightforward application of the
contraction mapping principle.

\begin{lemma}
\label{fixedpoint} Let Assumptions of Theorem \ref{th:equiv} be satisfied.
Assume moreover that
\begin{equation*}
{\lambda-\omega}>\Vert (A)^{-1}\Vert_{\mathcal L(V^{\prime })}\Vert
B\Vert^2_{\mathcal L(U,V^\prime)} [(h_0^*)^\prime][g_0^\prime].
\end{equation*}
Then there exists a unique solution $\bar x\in D(A)$ to the equation %
\eqref{Tlambda}.
\end{lemma}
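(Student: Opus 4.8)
The plan is to apply the Banach fixed point theorem (contraction mapping principle) to the operator $T:V^\prime\to V^\prime$ defined in \eqref{Tlambda}, exploiting the smallness condition on $\lambda-\omega$. First I would note that $T$ is well defined on all of $V^\prime$: indeed $g_0^\prime:V^\prime\to V$ is Lipschitz by Assumption \ref{asst2}-(3), the resolvent $(\lambda-A_1^*)^{-1}$ maps $V$ into $V$ and is bounded because $\lambda>\omega$ (so $\lambda$ lies in the resolvent set of $A_1^*$ — recall $A_1^*$ is the generator of a semigroup with the same exponential bound $\omega$, being the adjoint of $A$ in the $V,V^\prime$ duality), then $-B^*$ maps $V$ continuously into $U$, then $(h_0^*)^\prime:U\to U$ is Lipschitz by Assumption \ref{asst2}-(5) via Remark \ref{coniugate}, then $B$ maps $U$ into $V^\prime$, and finally $A^{-1}$ is bounded on $V^\prime$ since $0\in\rho(A)$ in the present setting (this is implicit in the statement, as the equation \eqref{Tlambda} only makes sense then; alternatively it follows from the hypothesis, which presumes $\|A^{-1}\|_{\mathcal L(V^\prime)}$ finite).

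Second I would estimate the Lipschitz constant of $T$. For $x,y\in V^\prime$, chaining the above maps gives
\begin{equation*}
|Tx-Ty|_{V^\prime}\le \|A^{-1}\|_{\mathcal L(V^\prime)}\,\|B\|_{\mathcal L(U,V^\prime)}\,[(h_0^*)^\prime]\,\|B^*\|_{\mathcal L(V,U)}\,\|(\lambda-A_1^*)^{-1}\|_{\mathcal L(V)}\,[g_0^\prime]\,|x-y|_{V^\prime}.
\end{equation*}
The key quantitative input is the resolvent bound $\|(\lambda-A_1^*)^{-1}\|_{\mathcal L(V)}\le (\lambda-\omega)^{-1}$, which is the standard Hille–Yosida estimate for the generator $A_1^*$ of a semigroup satisfying $\|e^{\tau A_1^*}\|_{\mathcal L(V)}\le e^{\omega\tau}$ (obtained from the Laplace-transform representation $(\lambda-A_1^*)^{-1}=\int_0^\infty e^{-\lambda\tau}e^{\tau A_1^*}\,d\tau$, valid for $\lambda>\omega$). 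Together with $\|B^*\|_{\mathcal L(V,U)}=\|B\|_{\mathcal L(U,V^\prime)}$, the Lipschitz constant of $T$ is bounded by
\begin{equation*}
\frac{\|A^{-1}\|_{\mathcal L(V^\prime)}\,\|B\|^2_{\mathcal L(U,V^\prime)}\,[(h_0^*)^\prime]\,[g_0^\prime]}{\lambda-\omega},
\end{equation*}
which is strictly less than $1$ precisely under the hypothesis of the lemma. Hence $T$ is a strict contraction on the complete metric space $V^\prime$ and admits a unique fixed point $\bar x\in V^\prime$.

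Third, I would upgrade the fixed point to a solution of \eqref{Tlambda} lying in $D(A)$: since $\bar x=Tx$ is of the form $-A^{-1}(\text{something in }V^\prime)$, it automatically belongs to $D(A)=\{A^{-1}z : z\in V^\prime\}$, and applying $A$ to $\bar x=T\bar x$ gives exactly $A\bar x+B(h_0^*)^\prime(-B^*(\lambda-A_1^*)^{-1}g_0^\prime(\bar x))=0$, i.e.\ $\bar x$ solves \eqref{eq:eqptx}. Uniqueness of the solution in $D(A)$ follows because any such solution is necessarily a fixed point of $T$ (apply $-A^{-1}$ to \eqref{eq:eqptx}), and the fixed point is unique. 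I do not expect any serious obstacle here; the only mildly delicate point is justifying the resolvent bound $\|(\lambda-A_1^*)^{-1}\|_{\mathcal L(V)}\le(\lambda-\omega)^{-1}$, which requires knowing that $A_1^*$ generates a $C_0$-semigroup on $V$ with growth bound $\le\omega$ — this is recorded in Section \ref{INTROSUBMAT} (where $A_1^*$ is identified as the generator of the restricted semigroup, with the same pseudo-contraction estimate) and in Assumption \ref{asst2}-(1) together with duality, so it may be invoked directly.
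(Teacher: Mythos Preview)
Your proposal is correct and follows essentially the same approach as the paper: estimate the Lipschitz constant of $T$ on $V'$ via the chain of bounds (using the Hille--Yosida resolvent estimate $\|(\lambda-A_1^*)^{-1}\|_{\mathcal L(V)}\le(\lambda-\omega)^{-1}$ and $\|B^*\|=\|B\|$), conclude $T$ is a strict contraction under the hypothesis, and observe that $T(V')\subseteq D(A)$ forces the fixed point into $D(A)$. The paper's proof is terser but identical in substance; your write-up simply makes each step explicit. (Minor slip: in your opening sentence you call it a ``smallness condition on $\lambda-\omega$'' --- it is of course a largeness condition, as you use correctly thereafter.)
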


\begin{rem}
The operator $T$ above is considered as an operator from $V^{\prime }$ to
itself. Since its image is contained in $D(A)$, when looking for fixed
points, it is also equivalent to look at it as an operator from $D(A)$ to
itself, considered as a subspace of $V'$, as
done in Lemma \ref{lm:fxptvintage}.\qn
\end{rem}

\begin{rem}
{All above results} could be generalized to the case in which we have state
constraints and the function $g_0$ is   convex  but not
necessarily Fr\'echet differentiable. This could be done using the results
of \cite{FaSC} and generalizing them to the infinite horizon case, using the
same arguments in \cite{G3}. Clearly, at points where $g_0$ is not Fr\'echet
differentiable, one would have to choose an element of the
subdifferential  of $g_0$.\qn
\end{rem}

\subsection{Stability}

\label{SS:STABILITY} Once existence (and possibly uniqueness) of equilibrium
points is proven, it is possible to study their stability properties
adapting known results such as those in \cite{Kato95} or in chapter 9 in
\cite{Lunardibook}, or by direct proof, as we see next.
In all cases, stability will be proven with respect to the topology of $V'$. For the reader's convenience we recall the definition here below.

\begin{defi}\label{def:stab}
A CLE-equilibrim point $\bar x\in D(A)$ is \emph{stable} in the
topology of $V^{\prime }$ if, $\forall\epsilon >0, \exists\delta>0$
such that, if $x\in V^{\prime }$ and $x^*(\cdot)$ is the optimal
trajectory starting at $x$, then $|x-\bar x|_{V^{\prime
}}< \delta\Rightarrow|x^*(t)-\bar x|_{V^{\prime }}< \epsilon$. If in addition $ \lim_{t\to\infty}\vert x^*(t)-\bar x\vert_{V'}=0$ then $\bar x$ is \emph{asymptotically} stable. Finally, if the same property hold true for all $x\in V'$, then $\bar x$ is
\emph{globally}  asymptotically stable.
\end{defi}

The first criterium to establish stability is contained in the following proposition and makes use of
 the linearization method. The proof follows from Corollary 2.2 in \cite{Kato95}.

\begin{prop}
\emph{(Stability by linearization)} \label{pr:stability}  {Let Assumption %
\ref{asst2} be satisfied and $\Psi\in C^1_{Lip}$. For $x \in V^\prime$ set $%
f(x):=B(h_0^*)^\prime(-B^*\Psi^\prime(x))$,} and assume that $\bar x \in D(A)
$ is a CLE-equilibrium point for (P), that $f$ is continuously Fr\'echet
differentiable at a neighborhood of $\bar x$, and denote by $\sigma_{\bar x}$
the spectrum of the operator $A+f^\prime(\bar x)$. If $\sup (Re \sigma_{\bar
x}) <0$, then $\bar x$ is stable in the topology of $V^{\prime }$. Moreover,
it is also asymptotically stable in the topology of $V^{\prime }$. If $\sup
(Re \sigma_{\bar x})>0$, $\bar x$ is unstable in the topology of $V^{\prime }
$.
\end{prop}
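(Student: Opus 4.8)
The plan is to reduce the statement to an application of a standard linearized stability/instability theorem for semilinear evolution equations in a Banach space (here $V^\prime$), namely Corollary 2.2 in \cite{Kato95}, and to check that its hypotheses are met by the closed loop equation \eqref{CLE} near the equilibrium $\bar x$. The closed loop equation, in the regular case $\Psi\in C^1_{Lip}$, reads $y^\prime(\tau)=Ay(\tau)+f(y(\tau))$ with $f(x)=B(h_0^*)^\prime(-B^*\Psi^\prime(x))$; by Theorem \ref{th:uniquefeedback} this generates the optimal trajectory $x^*(\cdot)$ through any datum $x\in V^\prime$, so stability of $\bar x$ as a CLE-equilibrium point in the sense of Definition \ref{def:stab} is exactly stability of $\bar x$ as a rest point of this semilinear equation.

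First I would record the ingredients needed to invoke \cite{Kato95}: $A$ generates a strongly continuous (pseudo-contraction) semigroup on $V^\prime$ by Assumption \ref{asst2}-(1); $f:V^\prime\to V^\prime$ is Lipschitz (this is the Lipschitz feedback map of Theorem \ref{th:uniquefeedback}, composed with $B\in L(U,V^\prime)$), and by hypothesis $f$ is continuously Fr\'echet differentiable in a neighborhood of $\bar x$, with $f^\prime(\bar x)\in L(V^\prime,V^\prime)$ bounded; and $\bar x\in D(A)$ is a stationary point, $A\bar x+f(\bar x)=0$, by Definition \ref{def:eq}. Since $f^\prime(\bar x)$ is bounded, $A+f^\prime(\bar x)$ with domain $D(A)$ is again the generator of a strongly continuous semigroup on $V^\prime$ (bounded perturbation of a generator), and $\sigma_{\bar x}$ denotes its spectrum. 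Writing $z(\tau)=y(\tau)-\bar x$, the equation becomes $z^\prime=(A+f^\prime(\bar x))z+r(z)$ with remainder $r(z)=f(\bar x+z)-f(\bar x)-f^\prime(\bar x)z=o(|z|_{V^\prime})$ as $|z|_{V^\prime}\to0$, which is precisely the structure to which the cited corollary applies.

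Then I would quote the two conclusions of Corollary 2.2 in \cite{Kato95}. If $\sup(\mathrm{Re}\,\sigma_{\bar x})<0$, then $0$ is an exponentially asymptotically stable rest point of $z^\prime=(A+f^\prime(\bar x))z+r(z)$: there are $M\ge1$, $\eta>0$, $\rho>0$ such that $|z(\tau)|_{V^\prime}\le M e^{-\eta\tau}|z(0)|_{V^\prime}$ whenever $|z(0)|_{V^\prime}<\rho$. Translating back, for every $\varepsilon>0$ choosing $\delta=\min\{\rho,\varepsilon/M\}$ gives $|x-\bar x|_{V^\prime}<\delta\Rightarrow|x^*(\tau)-\bar x|_{V^\prime}<\varepsilon$ for all $\tau\ge0$ and $|x^*(\tau)-\bar x|_{V^\prime}\to0$, i.e. stability and asymptotic stability in the topology of $V^\prime$. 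If instead $\sup(\mathrm{Re}\,\sigma_{\bar x})>0$, the instability part of the same corollary (a stable/unstable manifold or Lyapunov-type argument for sectorial-type or merely $C_0$-generators as treated there) yields that $\bar x$ is unstable. The only genuine care needed is to make sure the hypotheses of \cite{Kato95} match ours: specifically that the cited result is stated for a $C_0$-semigroup generator plus a $C^1$, locally-Lipschitz nonlinearity with the differentiability/remainder condition at the equilibrium, and does not require analyticity of the semigroup or the stronger spectral-gap/spectral-determined-growth hypotheses; if it does, one invokes instead the corresponding result in \cite{Lunardibook}, Chapter 9, after noting that in the vintage-capital application $A$ can be taken so that the relevant conditions hold. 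I would note this is the one place where the argument is ``by adaptation'' rather than direct, as flagged in the text preceding the proposition.

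The main obstacle I anticipate is therefore not computational but a matter of matching abstract frameworks: verifying that the spectral condition on $A+f^\prime(\bar x)$ alone (with no sectoriality of $A$) is enough for the chosen stability theorem, and that the growth bound of the perturbed semigroup is controlled by $\sup(\mathrm{Re}\,\sigma_{\bar x})$ in the regime we use — for a general $C_0$-semigroup the spectral bound need not equal the growth bound, so one must either appeal to a version of the theorem phrased directly in terms of the spectral bound of the generator (as \cite{Kato95} Cor. 2.2 is), or note that in the application the relevant operator has the spectral-determined-growth property. Everything else — Lipschitz continuity of the feedback, boundedness of $B$ and of $f^\prime(\bar x)$, the stationarity relation, and the $o(|z|)$ estimate on the remainder — is immediate from the results already proved, in particular Theorems \ref{th:uniquefeedback} and \ref{th:equiv}. $\qn$
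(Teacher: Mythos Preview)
Your proposal is correct and follows exactly the paper's own approach: the paper simply states that the proof follows from Corollary~2.2 in \cite{Kato95}, and you have spelled out precisely the verification of hypotheses (generator $A$, bounded Lipschitz perturbation $f$, $C^1$ near $\bar x$, remainder $o(|z|_{V'})$) needed to invoke that result. Your discussion of the spectral-bound-versus-growth-bound subtlety is a useful caveat that the paper does not make explicit.
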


Another result that can be used is the following. Recall that $%
\langle\cdot,\cdot\rangle_{V^{\prime }}$ indicates the inner product in $%
V^{\prime }$.

\begin{prop}
\emph{(Stability by dissipativity)} \label{pr:stabilitybis} {Let Assumption %
\ref{asst2} be satisfied and let $\Psi \in C_{Lip}^{1}(V^{\prime })$. For $%
x\in V^{\prime }$, and $f(x)=B(h_{0}^{\ast })^{\prime }(-B^{\ast }\Psi
_{0}^{\prime }(x))$,} and assume that, when $x\in D(A)$, the solution of the
closed loop equation
\begin{equation}
\begin{cases}
y^{\prime }(t)=Ay(t)+f(y(t)), & t>0 \\
y(0)=x, &
\end{cases}
\label{CLEf}
\end{equation}%
belongs to $D(A)$ for all $t\geq 0$. Let $\bar{x}\in D(A)$ be a
CLE-equilibrium point for (P). Assume that $A+f$ is dissipative {near $\bar{x%
}$}, i.e. there exists an open ball $I(\bar{x})$ in $V^{\prime }$ centered
at $\bar{x}$, and  $\xi\le 0$
  such that, for every $x\in I(\bar{x})\cap D(A)$,
\begin{equation}
\big(A(x-\bar{x})+f(x)-f(\bar{x})\big\vert x-\bar{x}\big)_{V^{\prime }}\leq
 {\xi} |x-\bar{x}|_{V^{\prime }}^{2}.  \label{eq:AdissV'}
\end{equation}%
Then $\bar{x}$ is stable in the topology of $V^{\prime }$. If $\xi <0$ then
$\bar{x}$ is asymptotically stable in the topology of $V^{\prime }$. If $A+f$
is dissipative on the whole $V^{\prime }$ and $\xi <0$ then $\bar{x}$ is
globally asymptotically stable.
\end{prop}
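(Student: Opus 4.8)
The plan is to treat the shifted process $z(t):=y(t)-\bar x$, where $y$ solves the closed loop equation \eqref{CLEf}, and to control the quantity $t\mapsto |z(t)|_{V'}^2=|y(t)-\bar x|_{V'}^2$ via a differential inequality obtained from the dissipativity hypothesis \eqref{eq:AdissV'}. Since $\bar x$ is a CLE-equilibrium point, $A\bar x+f(\bar x)=0$, so formally
\begin{equation*}
z'(t)=A z(t)+\big(f(y(t))-f(\bar x)\big),\qquad z(0)=x-\bar x.
\end{equation*}
First I would justify that $t\mapsto |z(t)|_{V'}^2$ is (locally) absolutely continuous with
\begin{equation*}
\tfrac12\tfrac{d}{dt}|z(t)|_{V'}^2=\big(Az(t)+f(y(t))-f(\bar x)\,\big\vert\, z(t)\big)_{V'}
\end{equation*}
for a.e.\ $t$; this is the standard computation for mild/strong solutions of a linear evolution equation perturbed by a Lipschitz term (recall $f$ is Lipschitz, being the composition of the Lipschitz feedback map with $B^*$ and $\Psi'$), using the hypothesis that the solution stays in $D(A)$ for all $t\ge0$, so that $z(\cdot)$ is in fact a strong solution and $z'(t)=Az(t)+f(y(t))-f(\bar x)$ holds in $V'$ pointwise a.e.

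Once this is in hand, as long as $y(t)\in I(\bar x)\cap D(A)$ the dissipativity inequality \eqref{eq:AdissV'} gives
\begin{equation*}
\tfrac12\tfrac{d}{dt}|z(t)|_{V'}^2\le \xi\,|z(t)|_{V'}^2,
\end{equation*}
hence, by Gronwall, $|z(t)|_{V'}^2\le e^{2\xi t}|z(0)|_{V'}^2$ for all $t$ for which the trajectory has not left $I(\bar x)$. For stability, fix $\epsilon>0$, shrink $\epsilon$ so that the closed ball of radius $\epsilon$ around $\bar x$ lies in $I(\bar x)$, and take $\delta=\epsilon$: a standard continuation/bootstrap argument (the trajectory is continuous, starts within $\delta=\epsilon$, and the a priori bound $e^{2\xi t}\le1$ since $\xi\le0$ prevents it from ever reaching the boundary of the ball of radius $\epsilon$) shows $|z(t)|_{V'}<\epsilon$ for all $t\ge0$. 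If $\xi<0$ the same bound gives $|z(t)|_{V'}\le e^{\xi t}|z(0)|_{V'}\to0$, which is asymptotic stability; and if \eqref{eq:AdissV'} holds on all of $V'$ with $\xi<0$, the estimate is global, with no need to keep the trajectory confined, yielding global asymptotic stability.

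The main obstacle is the rigorous justification of the energy identity for $|z(t)|_{V'}^2$ in the Gelfand-triple setting: $A$ is unbounded on $V'$, the pairing $(Az\,|\,z)_{V'}$ must be interpreted correctly, and one must know the solution is regular enough (strong, valued in $D(A)$) for the chain rule to apply — this is exactly why the hypothesis ``the solution of \eqref{CLEf} belongs to $D(A)$ for all $t\ge0$'' is imposed. I would handle this either by invoking the regularity of mild solutions of \eqref{CLEf} (the feedback term is Lipschitz, $\bar x\in D(A)$, and by assumption the trajectory stays in $D(A)$, so bootstrapping gives $y\in C^1((0,\infty);V')\cap C((0,\infty);D(A))$), or by approximating via Yosida regularization $A_n=nA(nI-A)^{-1}$ of $A$, deriving the inequality for the smooth approximants and passing to the limit. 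The remaining steps — the Gronwall estimate and the $\epsilon$–$\delta$ continuation argument — are routine, as is the localization to $I(\bar x)$.
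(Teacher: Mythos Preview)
Your core argument---the energy identity for $|y(t)-\bar x|_{V'}^2$, the dissipativity bound \eqref{eq:AdissV'}, Gronwall, and the continuation argument keeping the trajectory inside $I(\bar x)$---is exactly the paper's approach, and your treatment of the bootstrap is in fact more explicit than the paper's.

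There is, however, one point where your proposal and the paper diverge, and it matters. Stability in Definition~\ref{def:stab} is required for \emph{all} initial data $x\in V'$, not only $x\in D(A)$. Your direct route to the energy identity invokes the hypothesis ``when $x\in D(A)$, the solution stays in $D(A)$'', so it only covers $D(A)$ initial data. The paper closes this gap by a density argument: it picks $x_n\in I(\bar x)\cap D(A)$ with $x_n\to x$ in $V'$, runs your argument for each $y_n^*$ to get $|y_n^*(t)-\bar x|_{V'}\le e^{\xi t}|x_n-\bar x|_{V'}$, and then passes to the limit using a standard Gronwall estimate on $y_n^*(t)-x^*(t)$ (Lipschitz $f$ gives continuous dependence on initial data). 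Your alternative---Yosida approximation of $A$---is in principle viable, but note that the hypothesis \eqref{eq:AdissV'} is stated for $A+f$ as a whole, not separately for $A$ and $f$, so replacing $A$ by $A_n$ does not immediately give you the analogous inequality; you would need an extra argument there. The paper's approximation-of-initial-data route avoids this issue entirely and is the cleaner fix.
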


\begin{cor}
\label{cor:Adiss} Let the assumptions of Proposition $\ref{pr:stabilitybis}$
be verified, except \eqref{eq:AdissV'}. Let the operator $A$ satisfy
%be dissipative in $V'$ with
$( Ax\vert x)_{V^{\prime }}\le -\theta |x|_{V^{\prime }}^2$, for all $x$ in $%
V^{\prime }$, with $\theta>0$ a fixed constant. If there exists a
neighborhood $I$ of $\bar x$ where $f$ is Lipschitz continuous (in the
topology of $V^{\prime }$) with Lipschitz constant strictly smaller than $%
\theta$, then $\bar x$ is asymptotically stable in the topology of $%
V^{\prime }$. If $f$ is Lipschitz continuous in $V^{\prime }$ with Lipschitz
constant strictly smaller than $\theta$ then $\bar x$ is globally
asymptotically stable.
\end{cor}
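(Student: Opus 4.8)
The plan is to derive this corollary directly from Proposition \ref{pr:stabilitybis}, by checking that the coercivity of $A$ together with the Lipschitz bound on $f$ force the local dissipativity estimate \eqref{eq:AdissV'} with a strictly negative constant, so that the conclusions of that proposition apply verbatim.

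First I would fix $\bar x\in D(A)$ and shrink the given neighborhood $I$ to an open ball $I(\bar x)$ centered at $\bar x$ on which $f$ is still Lipschitz with some constant $L<\theta$. For $x\in I(\bar x)\cap D(A)$, writing $z=x-\bar x\in D(A)$, I would split
\begin{equation*}
\big(A(x-\bar x)+f(x)-f(\bar x)\,\big\vert\,x-\bar x\big)_{V^{\prime}}=(Az\vert z)_{V^{\prime}}+\big(f(x)-f(\bar x)\,\big\vert\,z\big)_{V^{\prime}},
\end{equation*}
bound the first term by $-\theta|z|_{V^{\prime}}^{2}$ using the standing hypothesis $(Ax\vert x)_{V^{\prime}}\le-\theta|x|_{V^{\prime}}^{2}$, and bound the second by $|f(x)-f(\bar x)|_{V^{\prime}}\,|z|_{V^{\prime}}\le L|z|_{V^{\prime}}^{2}$ via the Cauchy--Schwarz inequality in $V^{\prime}$ and the Lipschitz bound on $f$. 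Adding the two estimates gives exactly \eqref{eq:AdissV'} on $I(\bar x)$ with $\xi:=L-\theta<0$, and since all the remaining hypotheses of Proposition \ref{pr:stabilitybis} are assumed in the corollary, that proposition applies and yields that $\bar x$ is asymptotically stable in the topology of $V^{\prime}$.

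For the global assertion the same computation, now carried out for every $x\in D(A)$ with $f$ globally Lipschitz of constant $L<\theta$, shows that $A+f$ is dissipative on the whole of $V^{\prime}$ with constant $\xi=L-\theta<0$; the last statement of Proposition \ref{pr:stabilitybis} then gives global asymptotic stability.

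I do not expect any real obstacle here: the whole argument is Cauchy--Schwarz combined with the coercivity of $A$. The only mild points of care are that $x-\bar x$ indeed lies in $D(A)$ — which holds because both $x$ and $\bar x$ are taken in $D(A)$, so the coercivity estimate for $A$ is legitimately applied to it — and that passing to a smaller ball does not increase the Lipschitz constant of $f$, so the strict inequality $L<\theta$ is preserved.
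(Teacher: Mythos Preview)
Your proof is correct and follows essentially the same approach as the paper: verify the dissipativity estimate \eqref{eq:AdissV'} by combining the coercivity of $A$ with Cauchy--Schwarz and the Lipschitz bound on $f$, obtaining $\xi=L-\theta<0$, and then invoke Proposition~\ref{pr:stabilitybis}. Your write-up is in fact slightly more careful than the paper's (you explicitly shrink $I$ to a ball and note that $x-\bar x\in D(A)$), but the argument is the same.
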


In particular, the above corollary may be applied to the examples  in
Section 5, see e.g. subsection \ref{sss:RS}. 
%To get global stability results one should specify further the function $f$.

\section{Application to Optimal Investment with Vintage Capital}

\label{sec:avc}

The aim of this section is to show how valuable our general theory can be
when analyzing specific applications, and in particular when trying to
derive analytic formulas for {equilibrium distributions}. This process
unfolds by computing {MP-equilibrium/CLE-equilibrium} points for that
problem rephrased in abstract form as in Section \ref{INTROSUBMAT}. 

%In this section Assumption \ref{asst1} are supposed satisfied, even when not explicitely mentioned.

\smallskip

We begin by noting that the value function $V(t,x)=e^{-\lambda
t}V(0,x)=e^{-\lambda t}V_0(x)$ of the optimal control problem described in
Section \ref{MODEL} (see \eqref{eq:VFsec2}), satisfies, as a consequence of (%
\ref{profits}), (\ref{dati}) and (\ref{VF}),
\begin{equation*}
V_0(x)=-Z_0(x)
\end{equation*}
where $Z_0$ is the value function, defined in
Section \ref{DPKNOWN},  of  the abstract problem (P)  for $t=0$. Note that $V_0$ is a concave function, as
$Z_0$ is convex. Note also that, under additional assumptions (e.g.
Assumption \ref{asst2bis}, or regularity assumptions on $Z_0 $), $Z_0 $ is the unique classical solution of
HJB equation (\ref{SHJB}) in the sense of Definition \ref{defsolSHJB}. As a
consequence, the natural co-state for the maximization problem would be $$
\zeta(\tau,s)=-\pi(\tau)[s],$$ with $\pi$ the co-state of the abstract problem whose properties are
described in Theorem \ref{Pmp} and \ref{pi^*}. Then the optimality
conditions (\ref{CO}) for a triplet $(K^*,\zeta^*, u^*)$ can be written as the following set of equations
\begin{equation}
u_{0}^{\ast }(\tau )=(C_{0}^{\ast })^{\prime }(\zeta^{\ast }(\tau ,0)),\ \ \
u_{1}^{\ast }(\tau ,s)=\big( (C_{1}^{\ast })^{\prime }(\zeta ^{\ast }(\tau
,\cdot ))\big) [s]=[{c_{1}}(s,\cdot )^{\ast }]^{\prime }(\zeta ^{\ast
}(\tau ,s))  \label{u^*}
\end{equation}%
\begin{equation}  \label{p^*}
\zeta ^{\ast }(\tau ,s)=\int_{s}^{\overline{s}}e^{-(\lambda +\mu )(\xi
-s)}\,R^{\prime }\left( \int_{0}^{\overline{s}}\alpha (\theta )K^{\ast
}(\tau +\xi -s,\theta )d\theta \right) \,\alpha (\xi )d\xi ;
\end{equation}
\begin{equation} \displaystyle\lim_{T\rightarrow +\infty }e^{(\lambda -\omega )T}\zeta (T,s)=0, \ a.a.\ s\in[0,\bar s]. \end{equation}
\begin{equation}
K^{\ast }(\tau ,s)=%
\begin{cases}
e^{-\mu (\tau -s)}x(s-\tau +t)+\int_{0}^{\tau -t}e^{-\mu \sigma }u_{1}^{\ast
}(\tau -\sigma ,s-\sigma )d\sigma & s\in \lbrack \tau -t,\overline{s}],\text{
}\tau \in \lbrack t,\overline{s}+t] \\
e^{-\mu (\tau -s)}u_{0}^{\ast }(\tau -s)+\int_{0}^{s}e^{-\mu \sigma
}u_{1}^{\ast }(\tau -\sigma ,s-\sigma )d\sigma & s\in \lbrack 0,\tau -t],%
\text{ }\tau \in \lbrack t,\overline{s}+t] \\
0 & s\in \lbrack 0,\overline{s}],\text{ \ }\tau \in (\overline{s}+t,+\infty )
\end{cases}
\label{K^*}
\end{equation}

Note that (\ref{u^*}) and (\ref{p^*}) are derived from (\ref{CO}) by making
use of (\ref{mp2}) and (\ref{addi}), while (\ref{K^*}) is well known and can
be obtained by means of characteristics method (see e.g.  \cite{BG2}).

The following proposition is an immediate consequence of Theorem \ref{Pmp} and of
Lemma \ref{lm:verassvintage}.

\begin{prop}
\label{lm:vintagepmp} Under Assumptions \ref{asst1}, with $p\ge 2$ and $%
\lambda >0$, the optimality conditions (\ref{u^*})(\ref{p^*})(\ref{K^*}) are
necessary and sufficient for a couple $(u^*, K^*)$ to be optimal at $x$ for
the problem of optimal investment with vintage capital described in Section %
\ref{MODEL}. %If moreover $p>2$, the conditions are also necessary.
\end{prop}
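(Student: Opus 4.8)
The plan is to read off the result from the abstract Maximum Principle, Theorem \ref{Pmp}, after identifying the vintage-capital model with an instance of problem (P) and translating the abstract optimality system \eqref{CO} into the explicit equations \eqref{u^*}, \eqref{p^*}, \eqref{K^*}. By Section \ref{INTROSUBMAT} the model of Section \ref{MODEL} is of type (P), with state space $V^{\prime}$, control space $U=\mathbb{R}\times H$, the operators $A$, $B$ of \eqref{B^*} (so that $B^{\ast}v=(v(0),v)$), and data $g_0(x)=-R(\langle\alpha,x\rangle)$, $h_0(u)=C(u)$ as in \eqref{dati}. By Lemma \ref{lm:verassvintage}, Assumptions \ref{asst1} imply Assumptions \ref{asst2} with $\omega=-\mu$.

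The first thing to check is that the hypotheses of Theorem \ref{Pmp} hold under the standing assumptions $p\ge 2$, $\lambda>0$. Since $\mu>0$ we have $\omega=-\mu<0$, hence $(2\omega)\vee\omega=\omega=-\mu<0<\lambda$, so $\lambda>(2\omega)\vee\omega$ holds automatically; this already gives the sufficiency part, Theorem \ref{Pmp}$(i)$. For the necessity part, Theorem \ref{Pmp}$(ii)$ requires either $p>2$ and $\lambda>0$, or $p=2$, $\lambda>0$ and $\omega<0$: both cases are covered since $\omega=-\mu<0$. Therefore Theorem \ref{Pmp} applies in both directions: a pair $(u^{\ast},K^{\ast})$ is optimal at $x$ if and only if there is a co-state $\pi^{\ast}\in L^q_\lambda(t,+\infty;V)$, $q=p/(p-1)$, satisfying the abstract system \eqref{CO} together with $u^{\ast}$ and $K^{\ast}$.

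It then remains to verify that, upon setting $\zeta^{\ast}(\tau,s)=-\pi^{\ast}(\tau)[s]$ (the minus sign accounting for the passage from minimizing $Z_0$ to maximizing $V_0=-Z_0$), the system \eqref{CO} coincides with \eqref{u^*}--\eqref{K^*}. There are three items. First, the mild form of the state equation $y'=Ay+Bu^{\ast}$, $y(t)=x$, unfolds, using the modified translation semigroup $e^{A_0 t}$ and the representation of $Bu$ as $u_1+u_0\delta_0$ together with \eqref{A-1}, into the explicit formula \eqref{K^*}: this is the classical method-of-characteristics representation recalled in the text (cf.\ \cite{BG2}). Second, the maximum condition $-B^{\ast}\pi^{\ast}(\tau)\in\partial h_0(u^{\ast}(\tau))$, equivalently $u^{\ast}(\tau)=(h_0^{\ast})'(-B^{\ast}\pi^{\ast}(\tau))$ by \eqref{mp2}, combined with $B^{\ast}v=(v(0),v)$ and the additive structure of the conjugate $h_0^{\ast}(p_0,p_1)=C_0^{\ast}(p_0)+\int_0^{\bar s}c_1(s,\cdot)^{\ast}(p_1(s))\,ds$ (formula \eqref{addi}), yields componentwise $u_0^{\ast}(\tau)=(C_0^{\ast})'(\zeta^{\ast}(\tau,0))$ and $u_1^{\ast}(\tau,s)=[c_1(s,\cdot)^{\ast}]'(\zeta^{\ast}(\tau,s))$, i.e.\ \eqref{u^*}; Assumption \ref{asst1}$(iii)$ is what guarantees these derivatives exist and are Lipschitz. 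Third, the co-state equation $\pi^{\ast\prime}(\tau)=(\lambda-A_1^{\ast})\pi^{\ast}(\tau)-g_0'(K^{\ast}(\tau))$, with $g_0'(x)[s]=-R'(\langle\alpha,x\rangle)\alpha(s)$, read in mild sense and subject to the transversality condition $\lim_{T\to+\infty}e^{(\lambda-\omega)T}\pi^{\ast}(T)=0$ of \eqref{CO}, is solved by integrating forward against the semigroup generated by $\lambda-A_1^{\ast}$; since $A_1^{\ast}$ generates the modified translation semigroup $e^{A_0^{\ast}t}$ on $V$, the resulting convolution, after the change of variable identifying the semigroup time with $\xi-s$, produces exactly \eqref{p^*}, and the boundedness of $R'$ (Assumption \ref{asst1}$(i)$) together with $\lambda>\omega$ ensures $\pi^{\ast}\in L^q_\lambda(t,+\infty;V)$.

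The main obstacle is the last item: one must verify carefully that the unique mild solution of the (backward-stable) co-state ODE selected by the transversality condition is given by the closed integral formula \eqref{p^*}, keeping track of the two opposite translation directions of $e^{A_0 t}$ and $e^{A_0^{\ast}t}$, of the discount weight $e^{-(\lambda+\mu)(\xi-s)}$ arising from the combination of $\lambda$ and $\omega=-\mu$, and of the overall sign coming from $\zeta=-\pi$ and $g_0=-R(\langle\alpha,\cdot\rangle)$; one must also confirm the membership $\pi^{\ast}\in L^q_\lambda(t,+\infty;V)$ so that Theorem \ref{Pmp} genuinely applies. Everything else is routine bookkeeping in the Gelfand triple $V\hookrightarrow H\hookrightarrow V^{\prime}$, and the statement follows.
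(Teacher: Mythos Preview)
Your proposal is correct and follows essentially the same approach as the paper, which states the result as ``an immediate consequence of Theorem \ref{Pmp} and of Lemma \ref{lm:verassvintage}''; you simply expand the translation step (from \eqref{CO} to \eqref{u^*}--\eqref{K^*}) that the paper records in the sentence preceding the proposition. One small correction: the label \eqref{addi} in the paper refers to the variation-of-constants representation of the costate (used to derive \eqref{p^*}), not to the additive splitting of $h_0^{\ast}$; your argument for \eqref{u^*} relies on the latter, which is correct but unlabeled in the paper.
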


\subsection{Characterization of Equilibrium Points}
It is  natural to define an equilibrium point for the problem  consistently with Section \ref{SS:EQPT}. For the reader's
convenience,  Definition \ref{def:eq}  is  reformulated below in terms of the problem of optimal investment with vintage capital.

It is important to note that an equilibrium point is actually a function of the variable $s$ (although independent of $t$) dependent on the variable $s$, hence an \emph{%
equilibrium distribution}.

\begin{defi}\label{defeqapp} In reference to the  the optimal investment  {problem}
with vintage capital:
\begin{itemize}
\item[$(i)$] a \emph{MP-equilibrium
point} is  a stationary solution $({x}, \zeta,(u_0,u_1) )\in L^2{(0,\bar s)} \times
H^2(0,\bar s)\times (\mathbb{R}\times L^2{(0,\bar s)})$ of the system of
equations $(\ref{u^*})(\ref{p^*})(\ref{K^*})$;
\item[$(ii)$] for $V_0$   Fr\'echet differentiable, a CLE-equilibrium point  is any ${x}\in L^2{(0,\bar s)}$ which is a stationary
solution of  equation \eqref{K^*} when
\begin{equation*}
u_{0}^{\ast }(\tau )= [(C_{0}^{\ast })^{\prime }(V'_0(K^*(\tau,\cdot))][0],\ \ \
u_{1}^{\ast }(\tau ,s)= \left[ (C_{1}^{\ast})^{\prime }( V'_0(K^*(\tau,\cdot)))\right] [s]. \end{equation*}\end{itemize}
\end{defi}

\begin{rem} Several remarks are here due.
 \begin{enumerate}
\item Theorem \ref{th:equiv} implies, in the assumptions of Proposition \ref{lm:vintagepmp} that $(i)$ and $(ii)$ are equivalent in the following sense:
 the first component $x$ of a MP-equilibrium point $(x,p,(u_0,u_1))$ is also a
CLE-equilibrium point; conversely, when  $V_0$ is Fr\'echet differentiable, a CLE-equilibrium point $x$ can be used to build  a MP-equilibrium point  by means of  $(\ref{u^*})(\ref{p^*})(\ref{K^*})$, having $x$ as   first component.
\item  If  $V_0$ is  not Fr\'echet differentiable, the closed loop equation (as well as the definition above) may be generalized  to a differential inclusion in the sense of \eqref{eq:weakCLE}, where $V_0^{\prime }$ is replaced by  the superdifferential $\partial V_0$.
\item  Definition \ref{defeqapp} is consistent with Definition \ref{def:eq} as here $D(A)=L^2{(0,\bar s)}$ and $%
D(A_1^*)\subseteq H^2{(0,\bar s)}$.\color{blue}
\end{enumerate}\qn
\end{rem}

We further characterize {MP-equilibrium}/CLE-equilibrium points as fixed
points of a suitable operator. To this extent we define
\begin{equation}  \label{alphabar}
\bar{\alpha}(s)=\int_{s}^{\bar{s}%
}e^{-(\mu +\lambda )(\sigma -s)}\alpha (\sigma )d\sigma
\end{equation}%
\textit{i.e.} $\bar{\alpha}(s)$ is the \textit{discounted return} associated
with a unit of capital of vintage $s$.

\begin{lemma}
\label{lm:fxptvintage} Under Assumptions \ref{asst1}, $\bar x$ is a CLE-equilibrium point if and only if it is a
 fixed point of the operator $T:L^{2}{(0,\bar{s})%
}\rightarrow L^{2}{(0,\bar{s})}$ defined by
\begin{equation}
(Tx)[s]=\left( C_{0}^{\ast }\right) ^{\prime }\left( R^{\prime
}(\left\langle \alpha ,x\right\rangle )\bar{\alpha}(0)\right) e^{-\mu
s}+\int_{0}^{s}e^{-\mu (s-\sigma )}[{c_{1}}(\sigma ,\cdot )^{\ast }]^{\prime
}\left( R^{\prime }(\left\langle \alpha ,x\right\rangle )\bar{\alpha}(\sigma
)\right) d\sigma,  \label{Texpl}
\end{equation}
that is, if and only if $(T\bar x)[s]=\bar x(s)$ for {a.e.} $s$ in $[0,\bar{%
s}]$. Moreover $(\bar x,\bar \zeta,(\bar
u_0,\bar u_1))$ where
\begin{equation}  \label{eq:pu}
\bar \zeta(s)=R^\prime (\langle \alpha, \bar x\rangle)\bar\alpha(s), \ \
\bar u(s) =(h_0^*)^\prime( {B^*\bar \zeta})(s), \ \text{for\ a.e.\ }s\in[0,\bar{%
s}]
\end{equation}
is  a MP-equilibrium point.
\end{lemma}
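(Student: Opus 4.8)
The plan is to derive \eqref{Texpl} by writing out, for the concrete data of Section \ref{INTROSUBMAT}, the abstract operator whose fixed points are the CLE-equilibrium points of (P). Recall from Section \ref{SS:EQPT} (equations \eqref{eq:eqptx} and \eqref{Tlambda}) that, once $0\in\rho(A)$ and $\lambda\in\rho(A_1^{*})$, a CLE-equilibrium point is exactly a fixed point in $D(A)$ of $x\mapsto-A^{-1}B(h_0^{*})'(-B^{*}(\lambda-A_1^{*})^{-1}g_0'(x))$. Both facts hold here: $-A^{-1}$ is the bounded operator displayed in \eqref{A-1}; and, since by Lemma \ref{lm:verassvintage} $\omega=-\mu$ and $\lambda>\omega$, while $\|e^{A_0^{*}t}\|_{\mathcal L(V)}\le e^{-\mu t}$, the resolvent admits the Laplace representation $(\lambda-A_1^{*})^{-1}=\int_0^{\infty}e^{-\lambda t}e^{A_0^{*}t}\,dt$. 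So the proof amounts to a computation. (If $V_0$, hence $Z_0$, fails to be Fréchet differentiable one argues instead with the inclusion \eqref{eq:weakCLE}, replacing $g_0'$ by a selection of the subdifferential; the computation is unaffected.)

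I would carry out that composition term by term, using \eqref{dati} and Lemma \ref{lm:verassvintage}. First, $g_0'(x)=-R'(\langle\alpha,x\rangle)\,\alpha$. Inserting the explicit modified translation semigroup $[e^{A_0^{*}t}f](s)=f(s+t)e^{-\mu t}$ (for $s+t\le\bar s$, zero otherwise) into the resolvent integral and substituting $\sigma=s+t$ gives
\[
\big[(\lambda-A_1^{*})^{-1}g_0'(x)\big](s)=-R'(\langle\alpha,x\rangle)\int_s^{\bar s}e^{-(\mu+\lambda)(\sigma-s)}\alpha(\sigma)\,d\sigma=-R'(\langle\alpha,x\rangle)\,\bar\alpha(s),
\]
with $\bar\alpha$ as in \eqref{alphabar}; a short check shows $\bar\alpha\in D((A_0^{*})^{2})\subset H^{2}(0,\bar s)$, precisely because $\alpha\in H^{1}(0,\bar s)$ and $\alpha(\bar s)=0$ force $\bar\alpha(\bar s)=\bar\alpha'(\bar s)=0$ (Assumption \ref{asst1}$(i)$). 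Next, $-B^{*}$ of this vector is $R'(\langle\alpha,x\rangle)(\bar\alpha(0),\bar\alpha)$ since $B^{*}v=(v(0),v)$. Because $h_0(u_0,u_1)=C_0(u_0)+\int_0^{\bar s}c_1(s,u_1(s))\,ds$, the conjugate splits as $h_0^{*}(r_0,r_1)=C_0^{*}(r_0)+\int_0^{\bar s}c_1(s,\cdot)^{*}(r_1(s))\,ds$, so $(h_0^{*})'$ acts componentwise, yielding the control
\[
\bar u=\Big((C_0^{*})'\big(R'(\langle\alpha,x\rangle)\bar\alpha(0)\big),\ s\mapsto[c_1(s,\cdot)^{*}]'\big(R'(\langle\alpha,x\rangle)\bar\alpha(s)\big)\Big).
\]
Finally $B\bar u=\bar u_1+\bar u_0\delta_0$, and applying $-A^{-1}$ through \eqref{A-1} turns this into exactly the right-hand side of \eqref{Texpl}. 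Since the image of $-A^{-1}B$ lies in $D(A)=L^{2}(0,\bar s)$, fixed points are automatically in $L^{2}(0,\bar s)$, the restriction of the abstract operator there coincides with the $T$ of the statement, and $T$ is a genuine self-map of $L^{2}(0,\bar s)$ by the Lipschitz bounds in Assumption \ref{asst1}$(iii)$. This proves the ``if and only if''.

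For the final claim I would apply Theorem \ref{th:equiv}$(ii)$ (in the range of parameters where it is available, cf. Proposition \ref{lm:vintagepmp}): a fixed point $\bar x$ of $T$ is a CLE-equilibrium point, hence gives rise to the abstract MP-equilibrium point $(\bar x,\hat\pi,\hat u)$ with $\hat\pi=(\lambda-A_1^{*})^{-1}g_0'(\bar x)$ and $\hat u=(h_0^{*})'(-B^{*}\hat\pi)$. Recalling that the co-state of the original maximization problem is $\zeta=-\pi$, the computations just performed identify $\hat\pi=-\bar\zeta$ and $\hat u=\bar u$ with $\bar\zeta$, $\bar u$ exactly as in \eqref{eq:pu}. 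The three stationarity relations in \eqref{def:MPEP} then read as the stationary forms of \eqref{u^*}, \eqref{p^*} and \eqref{K^*} --- the last being $A\bar x+B\bar u=0$, i.e. the fixed-point identity $\bar x=-A^{-1}B\bar u=T\bar x$ --- while $\hat\pi\in D(A_1^{*})$ is precisely the regularity $\bar\zeta\in H^{2}(0,\bar s)$ demanded by Definition \ref{defeqapp}$(i)$. Hence $(\bar x,\bar\zeta,(\bar u_0,\bar u_1))$ is an MP-equilibrium point of the vintage problem.

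The main obstacle is bookkeeping rather than any substantial analysis: evaluating $(\lambda-A_1^{*})^{-1}$ so as to recognize $\bar\alpha$, keeping straight the two sign conventions and the change of dual variable $\zeta=-\pi$, and verifying that each object lands in its proper space --- in particular that $\bar\alpha\in D((A_0^{*})^{2})$ (which is where $\alpha(\bar s)=0$ is used), so that $\bar\zeta\in H^{2}(0,\bar s)$, and that a fixed point of $T$ automatically sits in $D(A)=L^{2}(0,\bar s)$ despite the abstract state space $V^{\prime}$ being strictly larger. Everything else is a direct transcription of Theorem \ref{th:equiv} through the identifications of $A$, $B$, $g_0$ and $h_0$ set up in Section \ref{INTROSUBMAT}.
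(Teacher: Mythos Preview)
Your proposal is correct and follows essentially the same route as the paper: compute each factor of the abstract operator \eqref{Tlambda} in the concrete data of Section~\ref{INTROSUBMAT} (first $g_0'(x)=-R'(\langle\alpha,x\rangle)\alpha$, then the resolvent producing $\bar\alpha$, then $B^*$, the splitting of $(h_0^*)'$, and finally $-A^{-1}$ via \eqref{A-1}), and invoke Theorem~\ref{th:equiv} for the MP-equilibrium statement. You supply more detail than the paper's proof---in particular the Laplace representation of $(\lambda-A_1^*)^{-1}$ and the check that $\bar\alpha\in D((A_0^*)^2)$ via $\alpha(\bar s)=0$---where the paper simply asserts $\bar\alpha=(\lambda-A_0^*)^{-1}\alpha$ ``by definition of $A_0^*$'' and closes with ``By Theorem~\ref{th:equiv} we derive the remaining statements.''
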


The proof of the lemma is contained in Appendix \ref{AC}.

\smallskip

Note that solving the equation $Tx=x$ within a space of functions is not particularly handy. Nonetheless solving such functional equation is equivalent - and   in the generality of cases -  to solving a \emph{numeric equation}.
In this sense, the following theorem contains the most interesting result of the section.

\begin{theo}
\label{equgen} Let Assumptions \ref{asst1} be satisfied, and let $T$ be given
by \eqref{Texpl}. Moreover,   for any $\eta\in\mathbb R$ and $s\in[0,\bar s]$,  consider the function
\begin{equation}  \label{eq:defFeta}
F(\eta )[s]=\left( C_{0}^{\ast }\right) ^{\prime }\left( \eta \bar{\alpha}%
(0)\right) e^{-\mu s}+\int_{0}^{s}e^{-\mu (s-\sigma )}[{c_{1}}(\sigma ,\cdot
)^{\ast }]^{\prime }\left( \eta \bar{\alpha}(\sigma )\right) d\sigma .
\end{equation}
%Assume that $R^{\prime }(\left\langle \alpha ,F(0)\right\rangle )<0$.}
Then ${\bar x}\in L^{2}(0,\bar{s})$ is a solution of $Tx=x$, if and only if
\begin{equation}
{\bar x}(s)=F(\bar{\eta })[s]  \label{punteqgen}
\end{equation}%
with $\bar{\eta }$   a solution  in $\mathbb R$  of
\begin{equation}
\eta =R^{\prime }(\left\langle \alpha ,F(\eta )\right\rangle ).
\label{eta=R}
\end{equation}
\end{theo}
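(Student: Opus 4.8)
The plan is to show that the functional equation $Tx = x$ is equivalent to a scalar equation by observing that the right-hand side of \eqref{Texpl} depends on $x$ only through the single real number $\eta := R'(\langle \alpha, x\rangle)$. Indeed, comparing \eqref{Texpl} with \eqref{eq:defFeta}, we see immediately that $(Tx)[s] = F(\eta)[s]$ with $\eta = R'(\langle \alpha, x\rangle)$. So the whole vector-valued map $T$ factors through $\mathbb{R}$: $T = F \circ \big(R' \circ \langle \alpha, \cdot\rangle\big)$.

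\textbf{The two implications.} First I would prove the ``only if'' direction. Suppose $\bar x \in L^2(0,\bar s)$ solves $T\bar x = \bar x$. Set $\bar\eta := R'(\langle \alpha, \bar x\rangle)$. Then by the factorization above, $\bar x = T\bar x = F(\bar\eta)$, which is \eqref{punteqgen}. It remains to check that $\bar\eta$ solves \eqref{eta=R}. Applying $\langle \alpha, \cdot\rangle$ to the identity $\bar x = F(\bar\eta)$ gives $\langle \alpha, \bar x\rangle = \langle \alpha, F(\bar\eta)\rangle$, and then applying $R'$ yields $\bar\eta = R'(\langle \alpha, \bar x\rangle) = R'(\langle \alpha, F(\bar\eta)\rangle)$, which is exactly \eqref{eta=R}. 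For the converse, suppose $\bar\eta$ solves \eqref{eta=R} and define $\bar x := F(\bar\eta)$. Then by \eqref{eta=R}, $\bar\eta = R'(\langle \alpha, F(\bar\eta)\rangle) = R'(\langle \alpha, \bar x\rangle)$, so $T\bar x = F\big(R'(\langle \alpha, \bar x\rangle)\big) = F(\bar\eta) = \bar x$; thus $\bar x$ is a fixed point of $T$.

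\textbf{Technical points to verify.} I would pause to confirm that all the quantities appearing are well-defined: that $F(\eta) \in L^2(0,\bar s)$ for each $\eta \in \mathbb{R}$, that $\langle \alpha, F(\eta)\rangle$ makes sense (using $\alpha \in H^1(0,\bar s) \subset L^2(0,\bar s)$ from Assumption \ref{asst1}), and that $R'$, $(C_0^*)'$ and $[c_1(\sigma,\cdot)^*]'$ are genuine single-valued Lipschitz functions so the compositions are legitimate — all of which follow from Assumptions \ref{asst1} together with Remark \ref{coniugate} (the injectivity of the subdifferentials gives Fréchet differentiability of the conjugates). These are the same facts already used in Lemma \ref{lm:fxptvintage}, so this is routine bookkeeping rather than a genuine difficulty.

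\textbf{Main obstacle.} There is essentially no deep obstacle here: the theorem is a reduction statement, and the heart of it is the algebraic observation that $T$ factors through a scalar. The only place requiring minor care is making sure the scalar $\langle \alpha, F(\eta)\rangle$ is exactly the argument that gets fed back, i.e. that the bracket pairing $\langle \cdot, \cdot\rangle$ used in \eqref{eta=R} is consistent with the one implicit in the definition of $T$ via \eqref{Texpl}; since both reduce to the $L^2(0,\bar s)$ inner product when the arguments lie in $L^2$, and $F(\eta), \bar x, \alpha$ all lie in $L^2(0,\bar s)$, this consistency is immediate. I would therefore expect the proof to be short, with the bulk of the writing devoted to spelling out the factorization and the two-line computations in each direction.
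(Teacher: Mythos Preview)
Your proposal is correct and follows essentially the same argument as the paper's proof: both hinge on the factorization $Tx = F(\eta(x))$ with $\eta(x) = R'(\langle \alpha, x\rangle)$, and both derive the two implications by applying $\eta$ (i.e., $R'\circ\langle\alpha,\cdot\rangle$) to the fixed-point identity in one direction and substituting $\bar x = F(\bar\eta)$ back in the other. Your additional remarks on well-definedness are accurate bookkeeping that the paper leaves implicit.
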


The proof of the theorem is contained in Appendix \ref{AC}.

\begin{rem}\label{rm:scuni}
Note that the solution of \eqref{eta=R} is unique and
nonnegative when, for instance,   $R^{\prime }(\left\langle \alpha ,F(0 )\right\rangle )\ge0.$
Indeed  $R^\prime$ is decreasing and $C_{0}^{\ast }$ and ${c_{1}}%
(\sigma ,\cdot )^{\ast }$ are convex functions, then the right hand side of (%
\ref{eta=R}) is a positive decreasing function of $\eta$. Hence the function
\begin{equation}  \label{eq:theta}
\theta(\eta):= \eta -R^{\prime }(\left\langle \alpha ,F(\eta )\right\rangle )
\end{equation}
satisfies $\theta(0)\le0$, is strictly increasing to $+\infty$, and hence has exactly one
nonnegative zero.\qn
\end{rem}

\medskip

Note that (\ref{Texpl}) and (\ref{punteqgen}) are general formulas, holding
for any choice of costs and revenues, as long as they satisfy Assumptions %
\ref{asst1}. More explicit formulas for the optimal equilibrium distribution  may be
derived once costs and revenues are further specified.

\subsection{Linear-quadratic costs}

\label{sec:lqc} Now we make formulas more explicit in the case of cost
functions satisfying $(\ref{lqcost})$. We   derive
\begin{equation}
C_{0}^{\ast }(p_{0})=\frac{(p_{0}-q_{0})^{2}}{4\beta _{0}},\text{ \ \ }%
C_{1}^{\ast }(p_{1})=\int_{0}^{\bar{s}}\frac{(p_{1}(s)-q_{1}(s))^{2}}{4\beta
_{1}(s)}ds  \label{ipo3}
\end{equation}%
so that
\begin{equation*}
\left( C_{0}^{\ast }\right) ^{\prime }(p_{0})=\frac{p_{0}-q_{0}}{2\beta _{0}}%
,\text{ \ \ }\left( C_{1}^{\ast }\right) ^{\prime }(p_{1})(s)=\frac{%
p_{1}(s)-q_{1}(s)}{2\beta _{1}(s)}
\end{equation*}%
In this case, (\ref{Texpl}) becomes
\begin{eqnarray}
(Tx)[s] &=&\frac{R^{\prime }(\left\langle \alpha ,x\right\rangle )\overline{%
\alpha }(0)-q_{0}}{2\beta _{0}}e^{-\mu s}+\int_{0}^{s}e^{-\mu (s-\sigma )}%
\frac{R^{\prime }(\left\langle \alpha ,x\right\rangle )\overline{\alpha }%
(\sigma )-q_{1}(\sigma )}{2\beta _{1}(\sigma )}d\sigma  \notag
\label{Texpl1} \\
&=&R^{\prime }(\left\langle \alpha ,x\right\rangle )w_{1}(s)-w_{2}(s)
\end{eqnarray}%
where $w_{1}$ and $w_{2}$ are the positive functions
\begin{equation}
w_{1}(s)=\frac{\bar{\alpha}(0)}{2\beta _{0}}e^{-\mu s}+\int_{0}^{s}e^{-\mu
(s-\sigma )}\frac{\bar{\alpha}(\sigma )}{2\beta _{1}(\sigma )}d\sigma
\label{w_1}
\end{equation}%
\begin{equation}
w_{2}(s)=\frac{q_{0}}{2\beta _{0}}e^{-\mu s}+\int_{0}^{s}e^{-\mu (s-\sigma )}%
\frac{q_{1}(\sigma )}{2\beta _{1}(\sigma )}d\sigma .  \label{w_2}
\end{equation}%
If in addition we define the positive coefficients
\begin{equation}
c_{1}=\langle w_{1},\alpha \rangle =\int_{0}^{\bar{s}}\alpha (s)w_{1}(s)ds,\
\ c_{2}=\langle w_{2},\alpha \rangle =\int_{0}^{\bar{s}}\alpha (s)w_{2}(s)ds.
\label{c12}
\end{equation}%
then the following result follows as a consequence of Theorem \ref{equgen}.

\begin{cor}
\label{equil1} Let Assumption \ref{asst1} and \eqref{lqcost}  be satisfied.
Let $w_{1},$ $w_{2}$, $c_{1},$ and $c_{2}$ be defined respectively by \eqref{w_1}, \eqref{w_2}  and \eqref{c12}.
Then $(\bar x, \bar \zeta, \bar u)$ is a MP-equilibrium point
if and only if $\eta \in  \mathbb{R}$ is  a solution of
\begin{equation}
\eta =R^{\prime }(\eta c_{1}-c_{2}).  \label{eq:eta}
\end{equation}
and
\begin{equation*}
\bar x(s)=-w_{2}(s)+\eta w_{1}(s).
\end{equation*}
 and moreover $\bar \zeta$ and $\bar u$ are given by \eqref{eq:pu}.
The constant control $\bar u$ is optimal at ${\color{red}\bar x}$, % $(0,{\color{red}\bar x})$.
  $\bar x$ is also a CLE-equilibrium point and,
if $V_0$ is Fr\'echet differentiable, then it is also the unique CLE-equilibrium point. \color{black}
\end{cor}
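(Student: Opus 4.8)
The plan is to obtain Corollary~\ref{equil1} as a direct specialization of Theorem~\ref{equgen} (together with Lemma~\ref{lm:fxptvintage}) to the cost~\eqref{lqcost}, after computing the convex conjugates explicitly, and then to read off the optimality and CLE-equilibrium statements from Theorem~\ref{th:equiv}. First I would record the conjugates. Since $C_0(u_0)=\beta_0 u_0^2+q_0 u_0$ is a one-dimensional strictly convex quadratic (recall $\beta_0>0$), maximizing $w\mapsto wp_0-\beta_0 w^2-q_0 w$ at $w=(p_0-q_0)/(2\beta_0)$ yields $C_0^\ast(p_0)=(p_0-q_0)^2/(4\beta_0)$ and hence $(C_0^\ast)^\prime(p_0)=(p_0-q_0)/(2\beta_0)$; the same computation carried out pointwise in $s$ on the integrand $\beta_1(s)r^2+q_1(s)r$ gives $c_1(s,\cdot)^\ast(p)=(p-q_1(s))^2/(4\beta_1(s))$ and $[c_1(s,\cdot)^\ast]^\prime(p)=(p-q_1(s))/(2\beta_1(s))$, which are the formulas quoted in~\eqref{ipo3}. (These derivatives are Lipschitz, the relevant subdifferentials are injective, and~\eqref{lqcost} is bounded below by a quadratic, so Assumptions~\ref{asst1}(ii)-(iv) hold with $p=2$; by Lemma~\ref{lm:verassvintage} then Assumptions~\ref{asst2} hold with $\omega=-\mu$.)

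Next I would plug these derivatives into the function $F$ of~\eqref{eq:defFeta}. Comparing with~\eqref{w_1}-\eqref{w_2} one gets
\begin{equation*}
F(\eta)[s]=\frac{\eta\bar\alpha(0)-q_0}{2\beta_0}e^{-\mu s}+\int_0^s e^{-\mu(s-\sigma)}\frac{\eta\bar\alpha(\sigma)-q_1(\sigma)}{2\beta_1(\sigma)}\,d\sigma=\eta\,w_1(s)-w_2(s),
\end{equation*}
and then, pairing with $\alpha$ and using~\eqref{c12}, $\langle\alpha,F(\eta)\rangle=\eta c_1-c_2$, so that the numeric equation~\eqref{eta=R} of Theorem~\ref{equgen} becomes exactly $\eta=R^\prime(\eta c_1-c_2)$, i.e.~\eqref{eq:eta}. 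Theorem~\ref{equgen} and Lemma~\ref{lm:fxptvintage} then give the equivalence: $\bar x\in L^2(0,\bar s)$ is a fixed point of $T$ (equivalently, a CLE-equilibrium point) if and only if $\bar x=-w_2+\eta w_1$ for a solution $\eta$ of~\eqref{eq:eta}, and in that case $(\bar x,\bar\zeta,\bar u)$ with $\bar\zeta,\bar u$ as in~\eqref{eq:pu} is an MP-equilibrium point. For the converse (an arbitrary MP-equilibrium point has this form), its first component $\bar x$ is a fixed point of $T$ by Lemma~\ref{lm:fxptvintage} and Theorem~\ref{th:equiv}(i) (or, directly, by writing the stationary form of~\eqref{u^*},~\eqref{p^*},~\eqref{K^*}), hence $\bar x=-w_2+\eta w_1$ with $\eta$ solving~\eqref{eq:eta}, and substituting the stationary trajectory back into~\eqref{p^*} and~\eqref{u^*} forces $\bar\zeta$ and $\bar u$ to be those of~\eqref{eq:pu}.

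Finally, optimality of the constant control $\bar u$ at $(0,\bar x)$ and the fact that $\bar x$ is a CLE-equilibrium point are exactly Theorem~\ref{th:equiv}(i), which applies here since $p=2$ and $\lambda>(2\omega)\vee\omega=\omega=-\mu$ by Assumption~\ref{asst2}(7) (note Theorems~\ref{Pmp},~\ref{pi^*} and~\ref{th:equiv} are stated for $p\ge 2$ precisely to cover the quadratic case, cf.\ Remark~\ref{rm:noquadraticsection3}, so no appeal to Assumption~\ref{asst2bis} is needed). For the last claim, when $V_0$ is Fr\'echet differentiable Lemma~\ref{lm:fxptvintage} identifies CLE-equilibrium points with fixed points of $T$, and by Theorem~\ref{equgen} these correspond bijectively to solutions of~\eqref{eq:eta}; but $R$ concave makes $R^\prime$ nonincreasing and $c_1>0$, so $\eta\mapsto\eta-R^\prime(\eta c_1-c_2)$ is strictly increasing (the argument of Remark~\ref{rm:scuni}) and~\eqref{eq:eta} has at most one root, whence there is at most one CLE-equilibrium point, necessarily $\bar x$. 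The routine part is the conjugate computation and the identification $F(\eta)=\eta w_1-w_2$; the only points that take a little care are the converse direction above (checking that every MP-equilibrium point carries exactly the co-state and control of~\eqref{eq:pu}) and, for the uniqueness statement, the precise role of the Fr\'echet differentiability of $V_0$ in Lemma~\ref{lm:fxptvintage}.
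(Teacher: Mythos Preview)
Your proposal is correct and follows essentially the same route as the paper: the paper computes the conjugates \eqref{ipo3}, derives \eqref{Texpl1} (i.e.\ $Tx=R'(\langle\alpha,x\rangle)w_1-w_2$, which is your identity $F(\eta)=\eta w_1-w_2$), and then simply states that the corollary follows from Theorem~\ref{equgen}. Your added details---verifying the Assumptions via Lemma~\ref{lm:verassvintage}, invoking Theorem~\ref{th:equiv}(i) for optimality and the CLE property, and the strict monotonicity argument of Remark~\ref{rm:scuni} for uniqueness---are exactly the fill-ins the paper leaves implicit.
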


\begin{rem}
If the CLE-equilibrium point $\bar x$ identified by Corollary \ref{equil1}
is such that $\bar x(s)\ge0$ at all $s$, then $\bar x$ is also a
CLE-equilibrium point for the problem with state constraints $k(\tau,s)\ge0$
for all $s$ and $\tau$ (see also Remarks \ref{sc1} and \ref{sc2}).\qn
\end{rem}

\begin{rem}
Note that Assumptions \ref{asst1} are satisfied here with $p=2$, so that $V_0$
 is not necessarily Fr\'echet differentiable. That implies that, although the first component $\bar x$ of a
MP-equilibrium point $(\bar x,\bar \zeta, \bar u)$ is also a CLE-equilibrium point, the viceversa may fail:  there may be  CLE-equilibrium points which do not derive as first components
of a MP- equilibrium point, i.e. solutions of the stationary closed loop equation which  fail to be optimal. For a further discussion on regularity of $V_0$, the reader is referrred to Section \ref{sss:RS}.\qn
\end{rem}

Once $R$ is chosen, the results in Corollary \ref{equil1} leads to an
explicit formula for that CLE-equilibrium point, as illustrated in the next
lemma.

\begin{lemma}
\label{theo:eq1} In the assumptions of Corollary \ref{equil1}, there exists a unique
 CLE-equilibrium point $\bar {x}$, described by the formuals below, for the associated choices of the revenue $R$:
\begin{enumerate}
\item[(i)] If $R(Q)=-aQ^2+bQ$, then
\begin{equation*}
\bar x=-w_2-\frac{2ac_2+b}{1+2ac_1} w_1;
\end{equation*}

\item[(ii)] If $R(Q)=\ln(1+Q),$ for $Q\ge0$ and $R(Q)=Q$ for $Q<0$, then
\begin{equation*}
\bar x=-w_2+\frac{\sqrt{(1-c_2)^2+4c_1}-(1-c_2)}{2c_1}\; w_1
\end{equation*}

\item[(iii)] If $R(Q)=b[(\nu+Q)^\gamma-\nu],$ with $\gamma\in(0,1)$, $%
b,\nu>0 $, for $Q\ge0$ and $R(Q)=\gamma\nu^{\gamma -1} Q$ for $Q<0$, then $%
\bar x=-w_2+\bar \eta w_1$ where $\bar \eta$ is the unique positive solution of
\begin{equation*}
\eta=\frac{b\gamma}{(\nu+c_1\eta-c_2)^{1-\gamma}}.
\end{equation*}

\item[(iv)] If $R(Q)=bQ^{\gamma },$ with $\gamma \in (0,1)$, $b>0$, for $%
Q\geq 0,$ and $R(Q)=-\infty $ for $Q<0$ (case with state constraints) then ${%
\bar x}=-w_{2}+\bar{\eta}w_{1}$ where $\bar{\eta}$ is the unique positive
solution of
\begin{equation*}
\eta =\frac{b\gamma }{(c_{1}\eta -c_{2})^{1-\gamma }}.
\end{equation*}
\end{enumerate}
\end{lemma}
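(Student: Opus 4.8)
The plan is to push the whole statement back to the scalar equation \eqref{eq:eta} and then dispose of the four revenue functions one at a time. By Corollary \ref{equil1} (equivalently, by Lemma \ref{lm:fxptvintage} combined with Theorem \ref{equgen} specialised to linear--quadratic costs), in this setting the CLE-equilibrium points are exactly the functions $\bar x = -w_2 + \bar\eta\, w_1$, with $w_1,w_2$ as in \eqref{w_1}--\eqref{w_2} and $\bar\eta\in\mathbb R$ solving $\eta = R'(\eta c_1 - c_2)$, the constants $c_1,c_2$ being those of \eqref{c12}; moreover such a point is unique as soon as $V_0$ is Fr\'echet differentiable. I would supply that differentiability by recalling that in the linear--quadratic case $Z_0=-V_0$ is itself quadratic, hence lies in $C^1_{Lip}(V^\prime)$ (Remark \ref{rm:uniquefeedback} and Section \ref{sss:RS}). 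After that, for each prescribed $R$ it remains to (a) insert $R'$ into \eqref{eq:eta}, (b) show the resulting scalar equation has a unique root $\bar\eta$ in the range where the chosen branch of $R$ is active, and (c) substitute $\bar\eta$ into $\bar x=-w_2+\bar\eta\,w_1$; the associated $\bar\zeta,\bar u$ and the optimality of the constant control $\bar u$ at $\bar x$ then come for free from Corollary \ref{equil1} and \eqref{eq:pu}.

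Case (i) is immediate: $R'(Q)=-2aQ+b$, so \eqref{eq:eta} is the linear equation $(1+2ac_1)\eta = 2ac_2+b$, which, since $a>0$ and $c_1>0$, has the unique solution $\bar\eta=(2ac_2+b)/(1+2ac_1)$; substituting into $\bar x=-w_2+\bar\eta\,w_1$ gives the formula in (i).

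For cases (ii)--(iv) I would use the monotonicity argument already recorded in Remark \ref{rm:scuni}. Since $R$ is concave, $R'$ is nonincreasing, and composing with the increasing affine map $\eta\mapsto\eta c_1-c_2$ (recall $c_1>0$) keeps it nonincreasing in $\eta$; hence $\theta(\eta):=\eta-R'(\eta c_1-c_2)$ is strictly increasing on the interval where $R'$ is finite, so it has at most one zero there, and one checks it runs from a nonpositive value (or $-\infty$) at the left endpoint up to $+\infty$, which pins the unique zero $\bar\eta$. Concretely: in (ii), $R'(Q)=(1+Q)^{-1}$ on $Q\ge0$ (continued by $R'\equiv1$ on $Q<0$), so on the relevant branch \eqref{eq:eta} becomes the quadratic $c_1\eta^2+(1-c_2)\eta-1=0$ whose only positive root is $\bar\eta=\big(\sqrt{(1-c_2)^2+4c_1}-(1-c_2)\big)/(2c_1)$; in (iii), $R'(Q)=b\gamma(\nu+Q)^{\gamma-1}$ and \eqref{eq:eta} is literally $\eta=b\gamma/(\nu+c_1\eta-c_2)^{1-\gamma}$ with strictly decreasing, positive right-hand side; in (iv), $R'(Q)=b\gamma Q^{\gamma-1}$ is finite only for $Q>0$, so I would work on $\eta>c_2/c_1$, where $\theta\to-\infty$ as $\eta\to(c_2/c_1)^+$ (because $R'(Q)\to+\infty$ as $Q\to0^+$) and $\theta\to+\infty$, giving the unique zero solving $\eta=b\gamma/(c_1\eta-c_2)^{1-\gamma}$. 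In each case substitution of $\bar\eta$ into $\bar x=-w_2+\bar\eta\,w_1$ produces the stated expression.

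I expect the only genuinely delicate point to be a piece of bookkeeping: one must check that the root $\bar\eta$ produced really lands in the branch of $R$ used to obtain the corresponding formula --- e.g. that $\bar\eta c_1-c_2\ge0$ in case (ii) (which is what makes the quadratic branch the right one) and $\bar\eta c_1-c_2>0$ in cases (iii)--(iv) (automatic in (iv) from the blow-up of $\theta$ at the left endpoint, and consistent with the positivity constraint imposed there by $R=-\infty$ on the negatives). Apart from this, and apart from invoking the regularity of $V_0$ in the linear--quadratic case for the uniqueness clause, the argument carries no functional-analytic content beyond what Corollary \ref{equil1} and Remark \ref{rm:scuni} already provide: it is a sequence of elementary substitutions.
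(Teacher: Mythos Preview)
Your approach is exactly the paper's: the paper's entire proof is the single sentence ``The proof follows from straightforward computations,'' and you have simply written out those computations --- reducing via Corollary~\ref{equil1} to the scalar equation $\eta=R'(\eta c_1-c_2)$ and then solving it case by case with the monotonicity observation of Remark~\ref{rm:scuni}. One small caveat: your appeal to $Z_0$ being quadratic (hence $V_0\in C^1_{Lip}$) for the uniqueness clause is valid only in case~(i), where both costs and revenue are quadratic; in cases (ii)--(iv) the paper does not supply an independent differentiability argument either, so your proof matches the paper's level of justification there as well.
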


\begin{proof}
The proof {follows from straightforward computations.}
\end{proof}

\subsubsection{Stability of Equilibrium Distributions} \label{sss:RS}
We close the section on linear-quadratic costs \eqref{lqcost} by briefly discussing  stability of equilibrium distributions and, in some subcases, the regularity of the value function,   by applying the results contained in Section \ref{SS:STABILITY}.
The concept of stability here used  is that of Definition \ref{def:stab}, 
% (and used in the results of Section \ref{SS:STABILITY}), 
%(namely Proposition \ref{pr:stability}, Proposition \ref{pr:stabilitybis}
%and Corollary \ref{cor:Adiss}). 
which is natural in this context. We remark though that the  convergence of functions there mentioned (i.e. in the topology of $V'$)  is \emph{not} a convergence  in the space $L^2(0,\bar s)$ but, roughly speaking,  the (weaker) convergence of their primitive functions.   
\smallskip
 
\begin{lemma}\label{stablin}
In the assumptions of Corollary \ref{equil1}, suppose in addition that the value function $V_0$ is Fr\'echet differentiable, and set $\xi=-\mu+  {[V_0]_L \vert \delta
_{0}\vert_{V^{\prime }}^2} /({4\beta_0})$, where $[V_0]_L$ indicates the Lipschitz constant of the gradient $V_0'$.
If $\xi\le0$ (respectively, $\xi<0$) then  $\bar x$ is stable (resp.,  asimptotically stable) 
in the sense of  Definition \ref{def:stab}.
\end{lemma}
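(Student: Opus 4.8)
The plan is to deduce Lemma \ref{stablin} from Proposition \ref{pr:stabilitybis} (and its Corollary \ref{cor:Adiss}) by verifying the dissipativity estimate \eqref{eq:AdissV'} for the operator $A+f$ of the vintage capital problem with linear-quadratic costs. First I would recall that, under the present assumptions, the abstract driving operator is $A$ (the extension of $A_0$ to $V'$ with $D(A)=H=L^2(0,\bar s)$) and that, by Lemma \ref{lm:verassvintage}, Assumption \ref{asst2} holds with $\omega=-\mu$; moreover from the discussion around Remark \ref{rm:uniquefeedback} one knows that $e^{\tau A}$ is a pseudo-contraction, in fact $(Ax\mid x)_{V'}\le -\mu\,|x|_{V'}^2$ for all $x\in D(A)$ (this is the infinitesimal version of $|e^{\tau A}x|_{V'}\le e^{-\mu\tau}|x|_{V'}$). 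This identifies the constant $\theta=\mu$ in Corollary \ref{cor:Adiss}.

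Next I would compute the feedback map $f$. With the linear-quadratic costs \eqref{lqcost} we have from \eqref{ipo3} that $(C_0^*)'(p_0)=(p_0-q_0)/(2\beta_0)$ and $(C_1^*)'(p_1)(s)=(p_1(s)-q_1(s))/(2\beta_1(s))$, hence $(h_0^*)'(u_0,u_1)$ is an affine map of its argument with linear part the bounded operator $(r_0,r_1)\mapsto (r_0/(2\beta_0),\,r_1/(2\beta_1(\cdot)))$. Since $B^*v=(v(0),v)$ and $\Psi'=V_0'$ has Lipschitz constant $[V_0]_L$ (measured, as throughout, in $V'$), the map $f(x)=B(h_0^*)'(-B^*\Psi'(x))$ is the composition $x\mapsto \Psi'(x)\mapsto -B^*\Psi'(x)\mapsto(\hbox{affine})\mapsto B(\cdot)$. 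The only unbounded ingredient is the trace $v\mapsto v(0)$, which reappears multiplied by $\beta_0^{-1}/2$ and then sent back through $B$ as $u_0\delta_0$; its contribution to the Lipschitz constant of $f$ therefore picks up a factor $|\delta_0|_{V'}^2$, one factor $|\delta_0|_{V'}$ for the evaluation $v\mapsto v(0)=\langle\delta_0,v\rangle_{V',V}$ and one for the reinsertion $u_0\mapsto u_0\delta_0$. Carefully bounding the Lipschitz constant of $f$ in the $V'$-topology one gets a bound $\le [V_0]_L\,|\delta_0|_{V'}^2/(2\beta_0)\cdot\frac12$ from the boundary part, i.e. the relevant quantity is $[V_0]_L|\delta_0|_{V'}^2/(4\beta_0)$; the distributed part is a bounded perturbation that, by the structure of the computation (it does not involve the trace), can be absorbed or is harmless — this is exactly why $\xi$ has the stated form $-\mu+[V_0]_L|\delta_0|_{V'}^2/(4\beta_0)$.

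Then I would assemble: if $\xi\le 0$, i.e. $[V_0]_L|\delta_0|_{V'}^2/(4\beta_0)\le\mu$, the estimate $(A(x-\bar x)+f(x)-f(\bar x)\mid x-\bar x)_{V'}\le \xi|x-\bar x|_{V'}^2$ holds for all $x\in D(A)$, so $A+f$ is (globally) dissipative with constant $\xi\le0$ and Proposition \ref{pr:stabilitybis} gives stability; if $\xi<0$ the same proposition gives asymptotic stability (and in fact global asymptotic stability, since the dissipativity is global here). One also must check the standing hypothesis of Proposition \ref{pr:stabilitybis} that solutions of the closed loop equation \eqref{CLEf} stay in $D(A)=L^2(0,\bar s)$; this follows from the explicit representation \eqref{K^*} of the optimal trajectory, whose two branches are $L^2$ whenever $x$ and the feedback controls are, and the feedback controls are affine-continuous images of $\Psi'(K^*(\tau,\cdot))\in V'$ — in fact $\bar x$ itself being in $D(A)$ by Corollary \ref{equil1}, and the whole CLE being well-posed on $D(A)$.

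The main obstacle I expect is the careful bookkeeping of the Lipschitz constant of $f$ in the $V'$-norm, and in particular pinning down why only the boundary ($u_0$) channel contributes the $|\delta_0|_{V'}^2/(4\beta_0)$ term while the distributed ($u_1$) channel does not worsen the constant beyond what is already absorbed — this hinges on the precise action of $B$, $B^*$ and $(h_0^*)'$ on the Gelfand triple and on the fact that $|B|_{\mathcal L(U,V')}$ and the norm of the bounded part of $(h_0^*)'$ combine with $[V_0]_L$ in a way that, after the $V'\hookrightarrow$ pairings are unwound, reduces to the single scalar in the statement. A secondary but routine point is confirming $(Ax\mid x)_{V'}\le-\mu|x|_{V'}^2$ from the pseudo-contraction property, i.e. differentiating $\tfrac{d}{d\tau}|e^{\tau A}x|_{V'}^2\big|_{\tau=0}\le -2\mu|x|_{V'}^2$; once these two ingredients are in place the lemma is an immediate instance of Corollary \ref{cor:Adiss} with $\theta=\mu$.
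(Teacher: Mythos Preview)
Your overall architecture is right (apply Proposition \ref{pr:stabilitybis} with $(Ax\mid x)_{V'}\le-\mu|x|_{V'}^2$ and estimate $(f(x)-f(\bar x)\mid x-\bar x)_{V'}$), but the place you flag as the ``main obstacle'' is exactly where the argument you sketch would fail. The distributed ($u_1$) channel is \emph{not} disposed of by any Lipschitz/operator-norm bookkeeping; if you try to bound it that way you pick up an extra term of size roughly $[V_0]_L/(2|\beta_1|_{L^\infty})$ and you cannot reduce to the clean $\xi=-\mu+[V_0]_L|\delta_0|_{V'}^2/(4\beta_0)$. In particular, routing the argument through Corollary \ref{cor:Adiss} (which only sees the full Lipschitz constant of $f$) gives a strictly worse constant than the one in the statement.

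The mechanism the paper uses is different and is the missing idea: write out
\[
(f(x)-f(\bar x)\mid x-\bar x)_{V'}
=-\frac{\langle\delta_0,\Psi'(x)-\Psi'(\bar x)\rangle}{4\beta_0}\,(\delta_0\mid x-\bar x)_{V'}
-\Big(\tfrac{1}{4\beta_1(\cdot)}\big(\Psi'(x)-\Psi'(\bar x)\big)\,\Big|\,x-\bar x\Big)_{V'}
\]
and then observe that the second (distributed) term has a \emph{sign}: since $\Psi=-V_0$ is convex on $V'$, its gradient $\Psi'$ is a monotone operator, hence $(\Psi'(x)-\Psi'(\bar x)\mid x-\bar x)_{V'}\ge 0$, and dividing by the positive function $4\beta_1(\cdot)$ preserves this. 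Thus the distributed term can simply be dropped in the upper bound, leaving only the boundary contribution, which is estimated by Cauchy--Schwarz and the Lipschitz bound on $\Psi'$ to give $\le [V_0]_L|\delta_0|_{V'}^2/(4\beta_0)\,|x-\bar x|_{V'}^2$. Combining with the $A$-dissipativity yields \eqref{eq:AdissV'} with the stated $\xi$, and Proposition \ref{pr:stabilitybis} finishes. So the key point you need to add is the monotonicity of $\Psi'$ coming from convexity of the value function, not a norm cancellation.
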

 
The proof of the lemma is contained in Appendix \ref{AC}.

\begin{rem}\label{stabqc}
In particular, the previous Lemma applies when $R$ is of the type
described in Lemma \ref{theo:eq1} $(i)$. Indeed with some extra work   one shows that in this case
the value function of the abstract problem is of type $$%
\Psi(x)=\langle Cx,x\rangle+\langle d, x\rangle+e,$$ for a suitable linear
operator $C:V^\prime\to V$, $d\in V$ and $e\in\mathbb{R}$, where $V$ and $V'$ are the spaces introduced in Subsection \ref{INTROSUBMAT}. Hence $\Psi$ is differentiable with Fr\'echet differential
$\Psi^\prime(x)=Cx+d$, and $[\Psi]_L=\Vert C\Vert_{\mathcal L(V',V)}$  (for a proof, we refer the reader to  \cite{LT}, vol 1, ch.2).   This applies in particular to the linear-quadratic examples of Section \ref{sens}.
%A different sufficient condition for asymptotic stability is the following.
%By linearity of $\Psi'(x)$ and \eqref{dissstima} it is readily shown that
%dissipativity of the operator $A+f(x)$ in Proposition \ref{pr:stability} is also implied by the condition
%\begin{equation}  \label{Cx0}
%\langle \delta_0,Cx\rangle ( \delta_0\vert x)_{V'}
%\ge 0, \ \textrm{for\ all\ } x \in H^1(0,\bar s) \ \textrm{such\ that\ }x(\bar s)=0.
%\end{equation}\color{red}
%This condition can be verified by simple computations since
%$\langle\delta_0,Cx\rangle=(Cx)(0)$ and
%$(\delta_0|x)_{V'}=R_x(0)$ where $R_x$ is the solution of a suitable $ODE$ depending on $x$. 
%\color{black}
%In such case (\ref{eq:AdissV'}) holds with $\xi=-\mu$. Hence, under assumption  (\ref{Cx0})
%the equilibrium distribution $\bar x$ described in Lemma \ref{theo:eq1} $(i)$
%is globally asymptotically stable.
\qn
 \end{rem}

\subsection{Linear-quadratic costs, constrained control}

We now choose costs as in $(\ref{lqc})$. We then derive
\begin{equation*}
g_{\beta ,M}^{\ast }(v)=\sup_{|u|\leq M}\left\{ vu-\beta u^{2}\right\}
=\left\{
\begin{array}{c}
\frac{v^{2}}{4\beta } \\
M|v|-\beta M^{2}%
\end{array}%
\begin{array}{c}
|v|\leq 2\beta \ M \\
otherwise\
\end{array}%
\right.
\end{equation*}%
Note that $g_{\beta ,M}^{\ast }$ is a $C^{1}$ function, with Lipschitz
derivative%
\begin{equation*}
\left( g_{\beta ,M}^{\ast }\right) ^{\prime }(v)=\left\{
\begin{array}{c}
\frac{v}{2\beta } \\
M \\
-M%
\end{array}%
\begin{array}{c}
|v|\leq 2\beta \ M \\
v>2\beta \ M \\
v<2\beta \ M\
\end{array}%
\right.
\end{equation*}%
As a consequence, the Legendre transform of $C$ is
\begin{equation*}
C^{\ast }(v)=g_{\beta _{0},M_{0}}^{\ast }(\beta _{0}-q_{0})+\int_{0}^{\bar{s}%
}g_{\beta _{1}(s),M_{1}}^{\ast }(v_{1}(s)-q_{1}(s))ds
\end{equation*}%
which is Fr\'echet differentiable with differential
\begin{equation*}
\left( C^{\ast }\right) ^{\prime }(v)(s)=\left( \left( g_{\beta
_{0},M_{0}}^{\ast }\right) ^{\prime }(v_{0}-q_{0});\left( g_{\beta
_{1}(s),M_{1}}^{\ast }\right) ^{\prime }(v_{1}(s)-q_{1}(s))\right)
\end{equation*}

while the operator (\ref{Texpl}) is given by
\begin{equation*}
T^{M}x(s)=\left( g_{\beta _{0},M_{0}}^{\ast }\right) ^{\prime }(R^{\prime
}(\left\langle \alpha ,x\right\rangle )\overline{\alpha }(0)-q_{0})e^{-\mu
s}+\int_{0}^{s}e^{-\mu (s-\sigma )}\left( g_{\beta _{1}(\sigma
),M_{1}}^{\ast }\right) ^{\prime }(R^{\prime }(\left\langle \alpha
,x\right\rangle )\overline{\alpha }(\sigma )-q_{1}(\sigma ))d\sigma
\end{equation*}

\begin{rem}
\label{rm:V0diff} Note that, with this choice of costs $C$, Assumptions \ref%
{asst1} are satisfied with $p>2$ so that, by Theorem \ref{th:mainnew}, the
value function $V_0$ is in $C^1_{Lip}$. By Lemma \ref{lm:fxptvintage} and
Theorem \ref{equgen} we then get that there exists a unique CLE-equilibrium
point, coinciding with the first component of the unique MP-equilibrium
point.\qn
\end{rem}

From this point on, one may {{procede}} as in the proof of
Theorem \ref{equgen} and Lemma \ref{theo:eq1} and compute CLE-equilibrium
points, once the data $\alpha ,q_{1},q_{0}$ are further specified.

\subsection{Power costs}

We now choose costs as in $(\ref{pc})$ and set $q=\frac{p}{p-1}$ . Note that
$p>2$ implies $q\in(1,2)$. The convex conjugate of the costs are then
\begin{equation}  \label{f^*}
f_{\beta }^{\ast }(v)=\left\{
\begin{array}{c}
(\beta p)^{1-q}q^{-1}v^{q}-\theta v+\beta \theta ^{p} \\
0%
\end{array}%
\begin{array}{c}
v\geq \beta p\theta ^{p-1} \\
v<\beta p\theta ^{p-1}%
\end{array}%
\right.
\end{equation}
with Lipschitz derivative
\begin{equation}  \label{f^*'}
\left( f_{\beta }^{\ast }\right) ^{\prime }(v)=\left\{
\begin{array}{c}
(\beta p)^{1-q}v^{q-1}-\theta \\
0%
\end{array}%
\begin{array}{c}
v\geq \beta p\theta ^{p-1} \\
v<\beta p\theta ^{p-1}.%
\end{array}%
\right.
\end{equation}

As a consequence the Legendre transform of $C$ is
\begin{equation*}
C^{\ast }(v)=f_{\beta _{0}}^{\ast }(v_{0}-q_{0})+\int_{0}^{\overline{s}%
}f_{\beta _{1}(s)}^{\ast }(v_{1}(s)-q_{1}(s))ds,
\end{equation*}%
which is a $C^{1}$ function with Lipschitz differential
\begin{equation*}
\left( C^{\ast }\right) ^{\prime }(v)(s)=\left( \left( f_{\beta _{0}}^{\ast
}\right) ^{\prime }(v_{0}-q_{0});\left( f_{\beta _{1}(s)}^{\ast }\right)
^{\prime }(v_{1}(s)-q_{1}(s))\right).
\end{equation*}%
Moreover
\begin{equation*}
T^{\theta }x(s)=\left( f_{\beta _{0}}^{\ast }\right) ^{\prime }(R^{\prime
}(\left\langle \alpha ,x\right\rangle )\overline{\alpha }(0)-q_{0})e^{-\mu
s}+\int_{0}^{s}e^{-\mu (s-\sigma )}\left( f_{\beta _{1}(\sigma )}^{\ast
}\right) ^{\prime }(R^{\prime }(\left\langle \alpha ,x\right\rangle )%
\overline{\alpha }(\sigma )-q_{1}(\sigma ))d\sigma .
\end{equation*}

\begin{rem}
  Note that Remark \ref{rm:V0diff} applies also to this case.\qn
\end{rem}

\bigskip

\section{Sensitivity analysis in two special cases}

\label{sens}

We here analyze further the case {{of}} linear-quadratic
costs discussed in Section \ref{sec:lqc}, and develop sensitivity analysis
accordingly. In particular we assume
\begin{equation}
\alpha (s)\equiv \alpha ,\ \ \ \beta _{1}(s)\equiv \beta _{0},\ \ \
q_{1}(s)=q_{0}e^{-ws}.  \label{acquisition_cost}
\end{equation}
Summing up, the objective functional of the profit maximizing firm is
\begin{eqnarray}
&&\int_{0}^{\infty }e^{-\lambda t}\left( R\left( Q\left( K(t)\right) \right)
-\int_{0}^{\bar{s}}\left( q_{1}\left( s\right) u_{1}\left( t,s\right) +\frac{%
1}{2}\beta _{0}u_{1}^{2}\left( t,s\right) \right) ds\right) dt
\label{objective} \\
&&\ \ \ \ \ \ \ \ \ \ \ \ \ \ \ \ \ \ \ \ \ \ \ \ \ \ \ \ \ \ \ \ \ \ \ \
-\int_{0}^{\infty }e^{-\lambda t}\left( q_{0}u_{0}\left( t\right) +\frac{1}{2%
}\beta _{0}u_{0}^{2}\left( t\right) \right) dt.  \notag
\end{eqnarray}%
We study separately the cases oflinear-quadratic and power revenues, depicted respectively in Lemma \ref{theo:eq1} $(i)$ and $(iii)$.
\subsection{Linear-Quadratic Revenues} \label{lqr} We here assume
\begin{equation}
R(Q)=bQ-aQ^{2}  \label{lin_quadr_R}
\end{equation}%
as in Lemma \ref{theo:eq1} $(i)$, so that the equilibrium distribution there described
equals
\begin{equation}
K^{\ast }\left( s\right) =-w_{2}\left( s\right) +\eta \;w_{1}\left( s\right)
,  \label{steadystate/general}
\end{equation}%
in which%
\begin{eqnarray}
w_{1}\left( s\right) &=&\frac{\alpha }{2\beta _{0}\left( \mu +\lambda
\right) }\left( e^{-\mu s}-e^{-\mu s}e^{-\left( \mu +\lambda \right) \bar{s}%
}+\frac{1-e^{-\mu s}}{\mu }+\right.  \label{w1} \\
&&\ \ \ \ \ \ \ \ \ \ \ \ \ \ \ \ \ \ \ \ \ \ \ \ \ \ \ \ \ \ \ \ \ \ \ \
\left. -\frac{e^{-\left( \mu +\lambda \right) \bar{s}}}{2\mu +\lambda }%
\left( e^{\left( \mu +\lambda \right) s}-e^{-\mu s}\right) \right) ,  \notag
\\
w_{2}\left( s\right) &=&\frac{q_{0}}{2\beta _{0}\left( \mu -w\right) }\left(
e^{-ws}-e^{-\mu s}\left( 1-\mu +w\right) \right) .  \label{w2}
\end{eqnarray}%
and
\begin{equation*}
\eta =\frac{b-2ac_{2}}{1+2ac_{1}}
\end{equation*}%
where
\begin{multline}  \label{k1}
c_{1} =\frac{\alpha ^{2}}{2\beta _{0}\left( \mu +\lambda \right) }\left(
\frac{1-\mu }{\mu ^{2}}\left( e^{-\mu \bar{s}}-1\right) +\frac{2\mu +\lambda
-1}{\mu \left( 2\mu +\lambda \right) }e^{-\left( 2\mu +\lambda \right) \bar{s%
}} \right. \\
\left. +\frac{\bar{s}}{\mu }-\frac{1}{\left( 2\mu +\lambda \right) \left(
\mu +\lambda \right) }+\frac{1-\left( \mu +\lambda \right) }{\mu \left( \mu
+\lambda \right) }e^{-\left( \mu +\lambda \right) \bar{s}}\right) ,
\end{multline}
\begin{equation}
c_{2} =\frac{\alpha q_{0}}{2\beta _{0}\left( \mu -w\right) }\left( \frac{\mu
-w-1}{\mu }\left( 1-e^{-\mu \bar{s}}\right) +\frac{1}{w}\left( 1-e^{-w\bar{s}%
}\right) \right) .  \label{k2}
\end{equation}

The explicit expression for the equilibrium distribution capital stock for every age, $%
K^*\left( s\right) ,$ allows us to obtain interesting economic implications.
To illustrate, we establish some numerical results, which mostly are
analytically proved as well. We start out from the following parameter
values:%
\begin{equation}
\alpha=3, \beta _{0}=0.5, \mu =0.2, \lambda=0.1, \bar s=10, q_{0}=5, w=0.25,
b=1, a=0.00004.  \label{benchmark}
\end{equation}%
The equilibrium distribution capital stock is depicted in Figure 1.

\begin{figure}
\begin{center}
\includegraphics[width=8truecm]{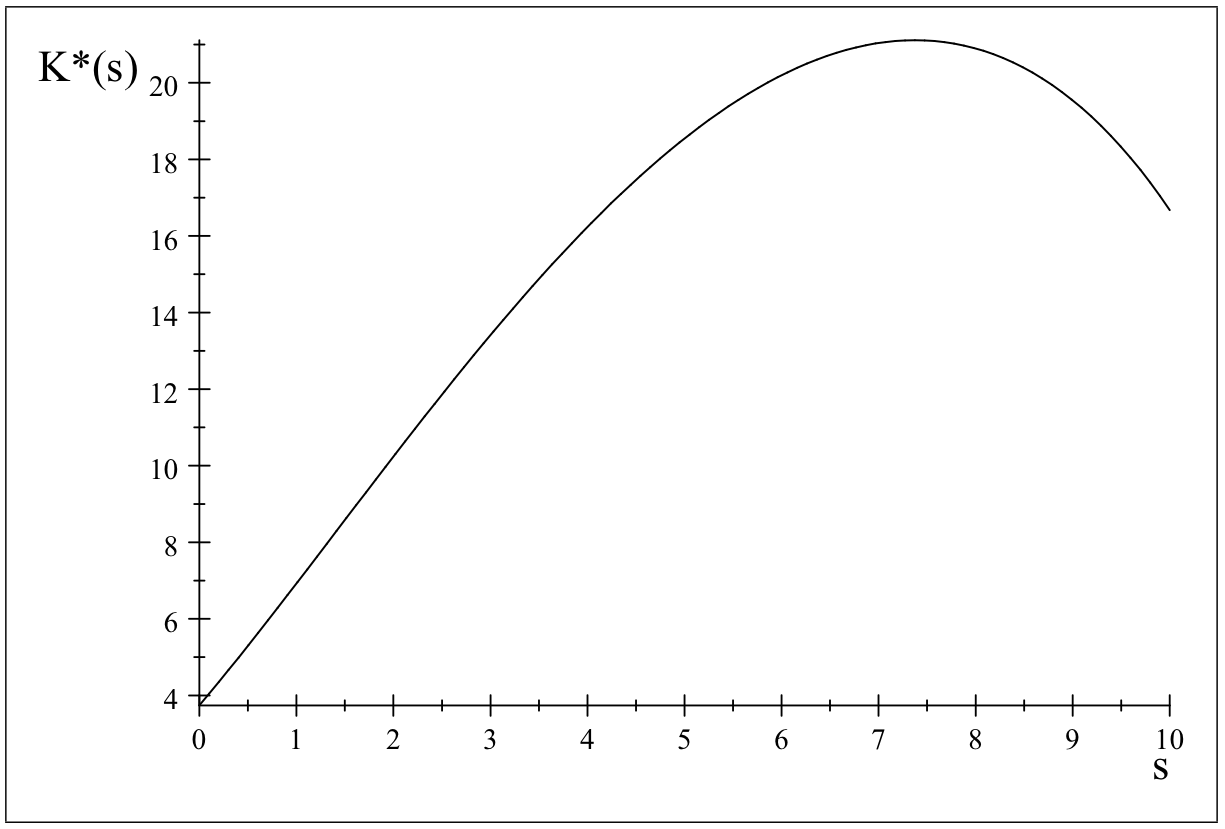}
\caption{Capital stock in equilibrium distribution for all ages, $s\in \left[ 0,\bar{s}\right] ,$ based on the parameter
values $\protect\alpha =3,\protect\beta _{0}=0.5,\protect\mu =0.2,\protect%
\lambda =0.1,\bar{s}=10,q_{0}=5,w=0.25,b=1,a=0.00004.$}
\end{center}
\end{figure}

We see that capital goods are non-monotonic with respect to age. To
understand this, Figure 2 depicts equilibrium distribution investment behavior, where
investment is given by
\begin{equation*}
u_{1}^{\ast }\left( s\right) =\frac{1}{2\beta _{0}}\left( \int_{s}^{\bar{s}%
}e^{-\left( \lambda +\mu \right) \left( j-s\right) }\left( b-2aQ^{\ast
}\right) \alpha dj-q_{0}e^{-ws}\right) ,
\end{equation*}%
with $Q^{\ast }$ being the production quantity in the equilibrium distribution.

\begin{figure}
\begin{center}
\includegraphics[width=8truecm]{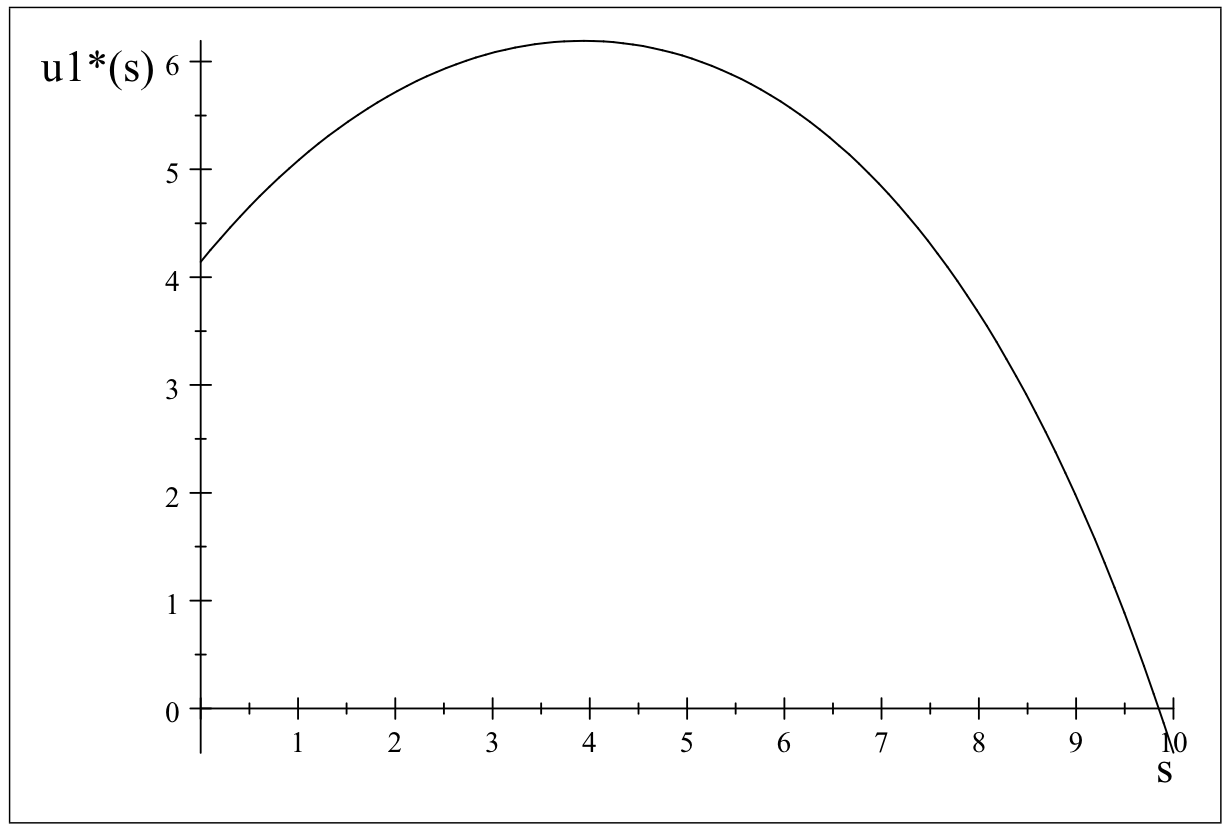}
\caption{Investment in equilibrium distribution for all ages, $s\in \left[ 0,\bar{s}\right] ,$ based on the parameter
values $\protect\alpha =3,\protect\beta _{0}=0.5,\protect\mu =0.2,\protect%
\lambda =0.1,\bar{s}=10,q_{0}=5,w=0.25,b=1,a=0.00004.$}
\end{center}
\end{figure}

Acquiring capital goods of older age is more attractive because they are
cheaper (see (\ref{acquisition_cost})). On the other hand their lifetime is
shorter, so they generate less revenue, which make older capital goods less
attractive. Figure 2 shows that the last effect dominates for the older
ages. The first effect plays a major role for younger ages. This makes sense
because the convexity of the unit cost of acquisition with respect to age,
as expressed in (\ref{acquisition_cost}), makes that these capital goods get
cheaper very quickly for slightly older age$.$

At first sight it is strange that $K^{\ast }\left( \bar{s}\right) >0$,
because $\bar{s}$ is the age capital goods are scrapped. However, the
presence of the adjustment costs,$\frac{1}{2}\beta _{0}\left[ u_{1}\left(
t,s\right) \right] ^{2}$, makes that it is not optimal to sell all capital
goods of age $\bar{s}$. In fact, convex adjustment costs make investments
continuous over time, and thus also over age since age and time go together.
Therefore, some of the capital goods of older age are still left. This is
confirmed in the investment graph of Figure 2, where we also see that $%
I^{\ast }\left( \bar{s}\right) $ is negative.

If we leave out the effect that older capital goods are less costly, 
the effect of having a shorter lifetime when capital goods get older remains, and steady state investments decrease with age.
This holds when we put%
\begin{equation}
q_{0}=q_{1}\left( s\right) =0,  \label{purely_quadratic}
\end{equation}%
and, combining this with (\ref{steadystate/general}), (\ref{w1}), and (\ref%
{w2}), we obtain, when revenue is not specified, that%
\begin{eqnarray}
K^{\ast }\left( s\right) &=&\eta w_{1}\left( s\right)  \label{K*(a)pqgeneral}
\\
&=&\frac{\eta \alpha }{2\beta _{0}\left( \mu +\lambda \right) }\left(
e^{-\mu s}-e^{-\mu s}e^{-\left( \mu +\lambda \right) \bar{s}}+\frac{1}{\mu }%
\left( 1-e^{-\mu s}\right) \right)  \notag \\
&&-\frac{\eta \alpha }{2\beta _{0}\left( \mu +\lambda \right) }\left(
e^{-\left( \mu +\lambda \right) \bar{s}}\frac{1}{2\mu +\lambda }\left(
e^{\left( \mu +\lambda \right) s}-e^{-\mu s}\right) \right) .  \notag
\end{eqnarray}%
In the specific case of a quadratic revenue function we get%
\begin{eqnarray}
&&  \label{K*(a)pq} \\
K^{\ast }\left( s\right) &=&\frac{bw_{1}\left( s\right) }{1+2ac_{1}}  \notag
\\
&=&\frac{b\left( \left( e^{-\mu s}-e^{-\mu s}e^{-\left( \mu +\lambda \right)
\bar{s}}+\frac{1}{\mu }\left( 1-e^{-\mu s}\right) -e^{-\left( \mu +\lambda
\right) \bar{s}}\frac{1}{2\mu +\lambda }\left( e^{\left( \mu +\lambda
\right) s}-e^{-\mu s}\right) \right) \right) }{\frac{2\beta _{0}\left( \mu
+\lambda \right) }{\alpha }+2a\alpha \left( \frac{1-\mu }{\mu ^{2}}\left(
e^{-\mu \bar{s}}-1\right) +\frac{2\mu +\lambda -1}{\mu \left( 2\mu +\lambda
\right) }e^{-\left( 2\mu +\lambda \right) \bar{s}}+\frac{1}{\mu }\bar{s}-%
\frac{1}{\left( 2\mu +\lambda \right) \left( \mu +\lambda \right) }+\frac{%
1-\mu -\lambda }{\mu \left( \mu +\lambda \right) }e^{-\left( \mu +\lambda
\right) \bar{s}}\right) }.  \notag
\end{eqnarray}
Equilibrium distribution investments being decreasing with age, 
also result in a hump-shaped structure of the steady state capital stock, like in Figure 1.
 The following proposition proves this analytically for a general
revenue function, thus based on the equilibrium distribution capital stock specified in (%
\ref{K*(a)pqgeneral}).

\begin{prop}
\label{sens:prop1}\textit{Consider the vintage capital stock model (\ref%
{ipde}), (\ref{Q}), ((\ref{objective})-(\ref{lin_quadr_R})) with purely
quadratic \ investment costs, i.e. we have (\ref{purely_quadratic}) that
partly replaces (\ref{acquisition_cost}). Then equilibrium distribution capital stock }$%
K^{\ast }\left( s\right) $ \textit{is positive for all }$s\in \left[ 0,\bar{s%
}\right] ,$ i\textit{s increasing in age for} $s\in \left[ 0,s^{\ast }\right]
$ \textit{and decreasing in age for }$s\in \left[ s^{\ast },\bar{s}\right] ,$
\textit{where}%
\begin{equation}
s^{\ast }=\frac{1}{2\mu +\lambda }\ln \left( \frac{2\mu +\lambda }{\mu
+\lambda }\left( \left( 1-\mu \right) e^{\left( \mu +\lambda \right) \bar{s}%
}+\mu \left( 1-\frac{1}{2\mu +\lambda }\right) \right) \right) >0.
\label{a*}
\end{equation}%
\textit{Furthermore, it holds that}%
\begin{equation}
K^{\ast }\left( 0\right) =\frac{\eta v}{2\beta _{0}\left( \mu +\lambda
\right) }\left( 1-e^{-\left( \mu +\lambda \right) \bar{s}}\right) >0,
\label{K*(0)pq}
\end{equation}%
\begin{equation}
K^{\ast }\left( \bar{s}\right) =\frac{\eta \alpha }{2\beta _{0}\left( \mu
+\lambda \right) }\left( -e^{-\mu \bar{s}}\left( -1+\frac{1}{\mu }\right)
+e^{-\mu \bar{s}}e^{-\left( \mu +\lambda \right) \bar{s}}\left( \frac{1}{%
2\mu +\lambda }-1\right) +\frac{1}{\mu }-\frac{1}{2\mu +\lambda }\right) >0.
\label{K*(h)pq}
\end{equation}
\end{prop}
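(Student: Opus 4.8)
The plan is to reduce the whole statement to the analysis of the single function $w_1$, exploiting that $w_1$ solves a first order linear ODE. With the purely quadratic costs \eqref{purely_quadratic} one has $q_0=q_1\equiv0$, hence $w_2\equiv0$ in \eqref{steadystate/general} and $K^*(s)=\eta\,w_1(s)$, with $\eta>0$: indeed with $c_2=0$ equation \eqref{eq:eta} reads $\eta=R'(\eta c_1)$, and since $R'$ is decreasing with $R'(0)>0$ for an admissible revenue, its unique root is strictly positive (cf. Remark \ref{rm:scuni}). Consequently positivity of $K^*$ on $[0,\bar s]$ is inherited from positivity of $w_1$ — evident from \eqref{w_1}, a sum of nonnegative terms with $w_1(0)=\bar\alpha(0)/(2\beta_0)>0$ — the monotonicity of $K^*$ coincides with that of $w_1$, and the endpoint formulas reduce to evaluating $\eta w_1$ at $s=0,\bar s$.

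Next I would use \eqref{acquisition_cost} ($\alpha$ constant, $\beta_1\equiv\beta_0$). Differentiating \eqref{w_1} yields $w_1'(s)=-\mu\,w_1(s)+\bar\alpha(s)/(2\beta_0)$ with $w_1(0)=\bar\alpha(0)/(2\beta_0)$, so $w_1'(0)=(1-\mu)\bar\alpha(0)/(2\beta_0)$ and $w_1'(\bar s)=-\mu\,w_1(\bar s)$; since $\bar\alpha(0)>0$, $\bar\alpha(\bar s)=0$, $w_1(\bar s)>0$ and $\mu<1$ (this inequality is implicitly required by the statement — it is exactly what makes the argument of the logarithm in \eqref{a*} positive), we get $w_1'(0)>0$ and $w_1'(\bar s)<0$. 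Differentiating the explicit form \eqref{w1} instead gives $w_1'(s)=\frac{\alpha}{2\beta_0(\mu+\lambda)}\bigl(a_1e^{-\mu s}-a_2e^{(\mu+\lambda)s}\bigr)$ with $a_2=(\mu+\lambda)e^{-(\mu+\lambda)\bar s}/(2\mu+\lambda)$ and $a_1=(1-\mu)+\mu e^{-(\mu+\lambda)\bar s}(2\mu+\lambda-1)/(2\mu+\lambda)$; here $a_2>0$ (since $\mu+\lambda>0$ and $2\mu+\lambda>0$, using $\lambda>\omega=-\mu$ from Lemma \ref{lm:verassvintage}) and, because $w_1'(0)>0$ forces $a_1>a_2$, also $a_1>0$. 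Thus $s\mapsto a_1e^{-\mu s}-a_2e^{(\mu+\lambda)s}$ is strictly decreasing (a strictly decreasing exponential minus a strictly increasing one, both with positive coefficients), positive at $s=0$ and negative at $s=\bar s$, so it has a unique zero $s^*\in(0,\bar s)$ with $w_1'>0$ on $[0,s^*)$ and $w_1'<0$ on $(s^*,\bar s]$. This is the claimed hump shape of $K^*=\eta w_1$, and in particular $s^*>0$; solving $a_1e^{-\mu s^*}=a_2e^{(\mu+\lambda)s^*}$, i.e. $e^{(2\mu+\lambda)s^*}=a_1/a_2$, and simplifying $a_1/a_2$ via $e^{-(\mu+\lambda)\bar s}e^{(\mu+\lambda)\bar s}=1$ reproduces exactly the expression \eqref{a*}.

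It remains to establish the endpoint formulas. Substituting $s=0$ into \eqref{K*(a)pqgeneral} collapses all terms but the first, giving \eqref{K*(0)pq}; substituting $s=\bar s$ and using $e^{-(\mu+\lambda)\bar s}e^{(\mu+\lambda)\bar s}=1$ together with $e^{-\mu\bar s}e^{-(\mu+\lambda)\bar s}=e^{-(2\mu+\lambda)\bar s}$ to regroup gives \eqref{K*(h)pq}; both are positive because $K^*=\eta w_1$ with $\eta>0$ and $w_1>0$. Everything here is routine differentiation and algebraic manipulation of exponentials; the only genuinely delicate point is the sign bookkeeping — ensuring $\mu<1$, $\mu+\lambda>0$, $2\mu+\lambda>0$ and $\eta>0$ all hold so that $a_1>a_2>0$, which is precisely what guarantees that the peak $s^*$ lies strictly inside $(0,\bar s)$ (and not at an endpoint).
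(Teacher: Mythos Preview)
Your argument is correct and follows the same core strategy as the paper: differentiate the explicit formula for $K^*$ (equivalently $w_1$), identify the derivative as a strictly decreasing function with a unique zero, and read off $s^*$. Both proofs tacitly use $\mu<1$ (the paper needs it in the step $(1-\mu)e^{(\mu+\lambda)\bar s}>(1-\mu)$ when showing the argument of the logarithm exceeds $1$), and you are right to flag this.

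Where you differ slightly is in the bookkeeping. The paper verifies $s^*>0$ by a direct inequality on the log argument and proves $K^*(\bar s)>0$ by algebraic manipulation of \eqref{K*(h)pq}. You instead exploit the ODE $w_1'=-\mu w_1+\bar\alpha/(2\beta_0)$ together with the integral representation \eqref{w_1}: positivity of $w_1$ on all of $[0,\bar s]$ follows immediately from the integral of nonnegative terms, the endpoint signs $w_1'(0)>0$ and $w_1'(\bar s)<0$ drop out of the ODE (using $\bar\alpha(\bar s)=0$), and $s^*\in(0,\bar s)$ then comes for free from the intermediate value theorem without computing the log argument. This is a modest but genuine streamlining; the paper's route is more computational, yours more structural, and both arrive at the same formula for $s^*$.
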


\begin{proof}
From (\ref{K*(a)pq}) we obtain that%
\begin{equation*}
K^{\ast \prime }\left( s\right) =\frac{\eta \alpha }{2\beta _{0}\left( \mu
+\lambda \right) }e^{-\mu s}\left( \left( 1-\mu +\mu \left( 1-\frac{1}{2\mu
+\lambda }\right) e^{-\left( \mu +\lambda \right) \bar{s}}-\frac{\mu
+\lambda }{2\mu +\lambda }e^{-\left( \mu +\lambda \right) \left( \bar{s}%
-s\right) }e^{\mu s}\right) \right) ,
\end{equation*}%
from which it is straightforwardly concluded that $K^{\ast \prime }\left(
s\right) >0$ for $s<s^{\ast },$ with $s^{\ast }$ given by (\ref{a*}) and
vice versa.

To check whether $s^{\ast }$ is positive we need to show that%
\begin{equation*}
\frac{2\mu +\lambda }{\mu +\lambda }\left( \left( 1-\mu \right) e^{\left(
\mu +\lambda \right) \bar{s}}+\mu \left( 1-\frac{1}{2\mu +\lambda }\right)
\right) >1,
\end{equation*}%
which holds because%
\begin{eqnarray*}
&&\frac{2\mu +\lambda }{\mu +\lambda }\left( \left( 1-\mu \right) e^{\left(
\mu +\lambda \right) \bar{s}}+\mu \left( 1-\frac{1}{2\mu +\lambda }\right)
\right) \\
&>&\frac{2\mu +\lambda }{\mu +\lambda }\left( \left( 1-\mu \right) +\mu
\left( 1-\frac{1}{2\mu +\lambda }\right) \right) =1.
\end{eqnarray*}

From (\ref{K*(a)pqgeneral}) we straightforwardly obtain the expressions (\ref%
{K*(0)pq}) and (\ref{K*(h)pq}). To prove that $K^{\ast }\left( \bar{s}%
\right) >0$ we have to show that%
\begin{equation*}
-e^{-\mu \bar{s}}\left( -1+\frac{1}{\mu }\right) +e^{-\mu \bar{s}}e^{-\left(
\mu +\lambda \right) \bar{s}}\left( \frac{1}{2\mu +\lambda }-1\right) +\frac{%
1}{\mu }-\frac{1}{2\mu +\lambda }>0,
\end{equation*}%
which is true since%
\begin{eqnarray*}
&&-e^{-\mu \bar{s}}\left( -1+\frac{1}{\mu }\right) +e^{-\mu \bar{s}%
}e^{-\left( \mu +\lambda \right) \bar{s}}\left( \frac{1}{2\mu +\lambda }%
-1\right) +\frac{1}{\mu }-\frac{1}{2\mu +\lambda } \\
&>&1-\frac{1}{\mu }+\frac{1}{2\mu +\lambda }-1+\frac{1}{\mu }-\frac{1}{2\mu
+\lambda }=0.
\end{eqnarray*}
\end{proof}

As a final illustration of the interesting economic results that can be
obtained, let us focus on the impact of the productivity parameter $\alpha .$
Let us increase $\alpha $ from its original value $3$, as in (\ref{benchmark}%
), to $\alpha =12.$ The resulting equilibrium distribution capital stock is depicted in
Figure 3.
\begin{figure}
\begin{center}
\includegraphics[width=8truecm]{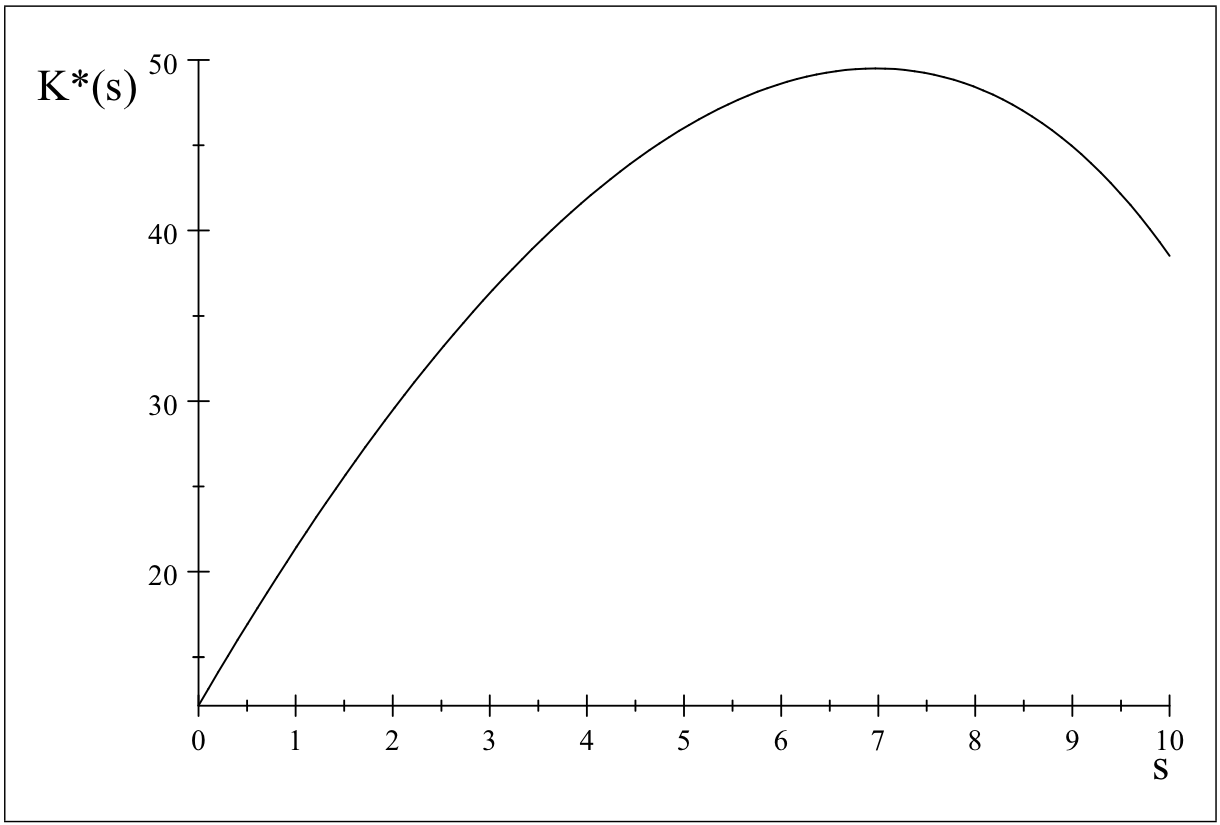}
\caption{Capital
stock in equilibrium distribution for every age $s\in \left[ 0,\bar{s}\right] $ for $\protect\alpha =12$
with remaining parameter values $\protect\beta _{0}=0.5,\protect\mu =0.2,%
\protect\lambda =0.1,\bar{s}=10,q_{0}=5,w=0.25,b=1,a=0.00004.$}
\end{center}
\end{figure}
The equilibrium distribution capital stock is still hump-shaped, as in the previous
figures, but the difference is that the firm buys more capital goods. Higher
productivity makes investing in capital goods more worthwhile. If we
increase the productivity parameter $\alpha $ even further to $\alpha =24,$
we obtain a equilibrium distribution capital stock being depicted in Figure 4.
\begin{figure}
\begin{center}
\includegraphics[width=8truecm]{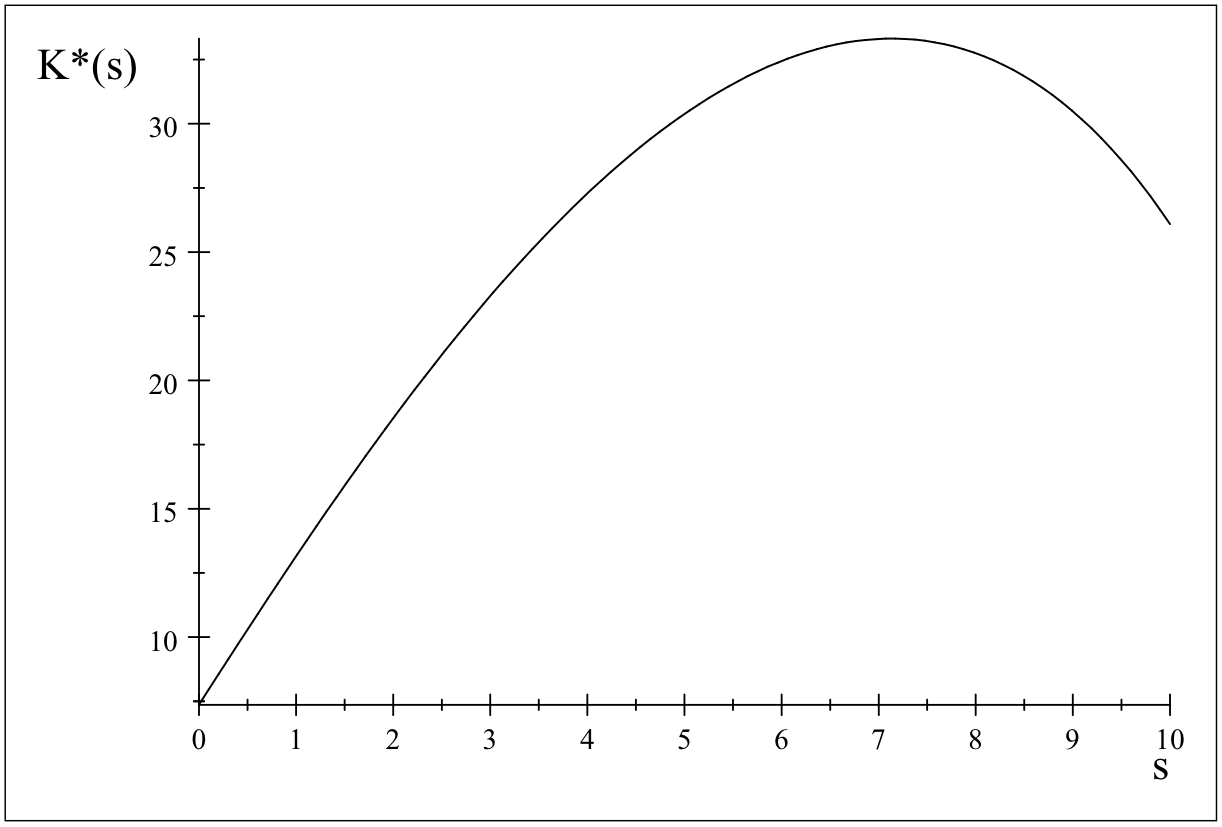}
\caption{Capital
stock in equilibrium distribution for all ages $s\in \left[ 0,\bar{s}\right] $ for productivity
parameter $\protect\alpha =24$ with remaining parameter values $\protect%
\beta _{0}=0.5,\protect\mu =0.2,\protect\lambda =0.1,\bar{s}%
=10,q_{0}=5,w=0.25,b=1,a=0.00004.$}
\end{center}
\end{figure}
Now the capital stock is smaller for all ages. Concavity of the revenue
function results in some bounded optimal quantity level, which, due to the
increased productivity, can be produced by less capital goods.

The non-monotonic behavior of the capital stock that is obtained when
productivity parameter $\alpha $ goes up, is an interesting result, from
which an expected outcome of including technological progress in the form of
process innovation can be predicted. Increased productivity first results in
more investments, but when productivity increases even further, investments
go down because the optimal quantity in this market can be produced by less
capital stock. The latter feature is new, and was for instance not derived
in Feichtinger et al. (2006).

The non-monotonicity dependence of the equilibrium distribution capital stock on the
productivity parameter can also be analytically proved in the special case
of purely quadratic investment costs, as we do in the next proposition.

\begin{prop}
\label{sens:prop2}\textit{In case of {{quadratic}} revenue}
\textit{(see} (\ref{lin_quadr_R})) \textit{and purely quadratic investment
costs, the equilibrium distribution} \textit{capital stock,} $K^{\ast }\left( s\right) $
\textit{is increasing with the productivity parameter} $\alpha $ \textit{for
}$\alpha \in \left[ 0,\hat{\alpha}\right] $ \textit{and decreasing with} $%
\alpha $ \textit{for }$\alpha \in \left[ \hat{\alpha},\infty \right) $%
\textit{, where}%
\begin{equation*}
\hat{\alpha}=\frac{1}{c_{1}b}\left( \sqrt{c_{2}+\frac{c_{1}b^{2}}{2a}}%
-c_{2}\right) >0,
\end{equation*}%
\textit{in which}%
\begin{eqnarray}
c_{1} &=&\frac{1}{2\beta _{0}\left( \mu +\lambda \right) }\left( \frac{1-\mu
}{\mu ^{2}}\left( e^{-\mu \bar{s}}-1\right) +\frac{2\mu +\lambda -1}{\mu
\left( 2\mu +\lambda \right) }e^{-\left( 2\mu +\lambda \right) \bar{s}%
}\right)  \label{c1} \\
&&+\frac{1}{2\beta _{0}\left( \mu +\lambda \right) }\left( \frac{\bar{s}}{%
\mu }-\frac{1}{\left( 2\mu +\lambda \right) \left( \mu +\lambda \right) }+%
\frac{1-\left( \mu +\lambda \right) }{\mu \left( \mu +\lambda \right) }%
e^{-\left( \mu +\lambda \right) \bar{s}}\right) ,  \notag
\end{eqnarray}%
\begin{equation}
c_{2}=\frac{q_{0}}{2\beta _{0}\left( \mu -w\right) }\left( \frac{\mu -w-1}{%
\mu }\left( 1-e^{-\mu \bar{s}}\right) +\frac{1}{w}\left( 1-e^{-w\bar{s}%
}\right) \right)  \label{c2}
\end{equation}
\end{prop}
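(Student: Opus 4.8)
The plan is to exploit the closed form of the equilibrium distribution from Corollary~\ref{equil1}: I will write $K^{\ast }(s)$ as a fixed \emph{positive} function of the age $s$ multiplied by a single scalar function $\phi(\alpha)$, so that the monotonicity of $K^{\ast }(s)$ in the productivity $\alpha$ reduces, uniformly in $s$, to the sign of $\phi^{\prime }(\alpha)$.

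First I would recall, from Corollary~\ref{equil1} and Lemma~\ref{theo:eq1}$(i)$, that for quadratic revenue $R(Q)=bQ-aQ^{2}$ the unique equilibrium distribution is $K^{\ast }(s)=-w_{2}(s)+\eta\,w_{1}(s)$, $\eta=(b-2ac_{2})/(1+2ac_{1})$, with $c_{i}=\langle w_{i},\alpha\rangle$ and $w_{1},w_{2}$ given by \eqref{w_1}--\eqref{w_2}; under \eqref{purely_quadratic} one has $w_{2}\equiv 0$, $c_{2}=0$, hence $K^{\ast }(s)=\eta\,w_{1}(s)$ with $\eta=b/(1+2ac_{1})$. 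Then I would isolate the $\alpha$-dependence: since $\alpha(\cdot)\equiv\alpha$ is constant, \eqref{w1} shows $w_{1}(s)=\alpha\,\omega_{1}(s)$ with $\omega_{1}$ independent of $\alpha$, so $c_{1}=\int_{0}^{\bar s}\alpha\,w_{1}=\alpha^{2}\kappa_{1}$ with $\kappa_{1}:=\int_{0}^{\bar s}\omega_{1}$ the $\alpha$-free constant written as $c_{1}$ in \eqref{c1}; consequently $K^{\ast }(s)=\phi(\alpha)\,\omega_{1}(s)$ with $\phi(\alpha):=b\alpha/(1+2a\kappa_{1}\alpha^{2})$. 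I would also record that $\omega_{1}(s)>0$ on $[0,\bar s]$ and $\kappa_{1}>0$: the positivity of $w_{1}$ is already contained in the proof of Proposition~\ref{sens:prop1} (there $K^{\ast }=\eta w_{1}$ with $\eta>0$ is shown to be positive), and it forces $\kappa_{1}=\int_{0}^{\bar s}\omega_{1}>0$.

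Next I would differentiate: since $\omega_{1}$ does not depend on $\alpha$, $\partial_{\alpha}K^{\ast }(s)=\phi^{\prime }(\alpha)\,\omega_{1}(s)$ with $\phi^{\prime }(\alpha)=b\,(1-2a\kappa_{1}\alpha^{2})/(1+2a\kappa_{1}\alpha^{2})^{2}$. Because $\omega_{1}(s)>0$ and $a,b,\kappa_{1}>0$, at every $s$ the sign of $\partial_{\alpha}K^{\ast }(s)$ is the sign of $1-2a\kappa_{1}\alpha^{2}$, which is positive for $\alpha<\hat\alpha:=1/\sqrt{2a\kappa_{1}}$ and negative for $\alpha>\hat\alpha$; with $\kappa_{1}=c_{1}$ as in \eqref{c1} and $c_{2}=0$ under \eqref{purely_quadratic} this is exactly the threshold in the statement, and it is manifestly positive. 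If instead the linear--quadratic acquisition cost \eqref{acquisition_cost} is kept, then by \eqref{w2} $w_{2}$ is $\alpha$-free and $c_{2}=\alpha\kappa_{2}$ with $\kappa_{2}$ the constant in \eqref{c2}, so $\phi(\alpha)=(b\alpha-2a\kappa_{2}\alpha^{2})/(1+2a\kappa_{1}\alpha^{2})$ and $\phi^{\prime }$ has numerator $P(\alpha)=b-4a\kappa_{2}\alpha-2ab\kappa_{1}\alpha^{2}$, a strictly concave quadratic with $P(0)=b>0$, whose unique positive root $\hat\alpha=\frac{1}{b\kappa_{1}}\bigl(\sqrt{\kappa_{2}^{2}+b^{2}\kappa_{1}/(2a)}-\kappa_{2}\bigr)$ plays the role of the threshold just as above.

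I do not expect any genuine obstacle here: the argument is routine calculus. The only points I would be careful about are the bookkeeping in the middle step — checking that with $\alpha$ constant $w_{1}$ scales like $\alpha$ and $c_{1}$ like $\alpha^{2}$, so that $K^{\ast }$ really does factor as ``function of $\alpha$'' times ``positive function of $s$'' — importing the positivity of $w_{1}$ from Proposition~\ref{sens:prop1}, and the short algebra that solves $P(\hat\alpha)=0$ and matches it to the displayed formula.
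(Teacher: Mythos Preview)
Your approach is essentially the same as the paper's: both factor the equilibrium distribution as a positive $\alpha$-free function of $s$ times a scalar function of $\alpha$, then study the sign of the derivative of that scalar, which is governed by a concave quadratic with one positive root $\hat\alpha$. You are slightly more careful than the paper in distinguishing the purely quadratic case ($c_2=0$, giving $\hat\alpha=1/\sqrt{2a c_1}$) from the general linear--quadratic case, and your square root $\sqrt{\kappa_2^{2}+b^{2}\kappa_1/(2a)}$ corrects an apparent typo in the displayed formula (the paper writes $c_2$ rather than $c_2^{2}$ under the radical, though its own derivation of the quadratic $-2ac_1b\alpha^2-4ac_2\alpha+b$ yields your version).
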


\begin{proof}
From (\ref{steadystate/general})-(\ref{k2}), (\ref{c1}) and (\ref{c2}) we
obtain that
\begin{equation*}
\frac{\partial K^{\ast }}{\partial \alpha }=\frac{\partial }{\partial \alpha
}\left( \frac{b\alpha -2ac_{2}\alpha ^{2}}{1+2ac_{1}\alpha ^{2}}\right)
\varphi _{1}\left( s\right) ,
\end{equation*}%
in which%
\begin{eqnarray*}
\varphi _{1}\left( s\right) &=&\frac{1}{2\beta _{0}\left( \mu +\lambda
\right) }\left( e^{-\mu s}-e^{-\mu s}e^{-\left( \mu +\lambda \right) \bar{s}%
}+\frac{1}{\mu }\left( 1-e^{-\mu s}\right) -e^{-\left( \mu +\lambda \right)
\bar{s}}\frac{1}{2\mu +\lambda }\left( e^{\left( \mu +\lambda \right)
s}-e^{-\mu s}\right) \right) \\
&>&0.
\end{eqnarray*}%
It follows that%
\begin{equation*}
\frac{\partial K^{\ast }}{\partial \alpha }=\frac{-2ac_{1}b\alpha
^{2}-4ac_{2}\alpha +b}{\left( 1+2ac_{1}\alpha ^{2}\right) ^{2}}\varphi
_{1}\left( s\right) .
\end{equation*}%
Recognizing that the concave second order polynomial%
\begin{equation*}
-2ac_{1}b\alpha ^{2}-4ac_{2}\alpha +b
\end{equation*}%
has a negative root and a positive root being equal to $\hat{\alpha},$ gives
the result of the proposition.
\end{proof}

\begin{rem}
Concerning the stability of the equilibrium distribution $K^*$, we observe that Remark \ref{stabqc}
applies here and may imply, depending on the value of the parameters, that $K^*$ is  locally, or even globally, stable.\end{rem}

\subsection{Power Revenues}
In this section  we derive that the
same result as that in Section \ref{lqr}, i.e. equilibrium distribution capital stock is hump-shaped in $\alpha ,$
can be established for an alternative revenue function based on the
iso-elastic inverse demand function%
\begin{equation*}
p=bQ^{-\frac{1}{\epsilon }},
\end{equation*}%
in which $\varepsilon >1$ is the demand elasticity. Then the revenue
function is%
\begin{equation*}
R\left( Q\right) =bQ^{\gamma }
\end{equation*}%
with $\gamma =1-1/\epsilon .$ Since this revenue function has infinite
derivative for $Q=0,$ it is not a $C^{1}$ function. Therefore, instead we
employ the revenue function%
\begin{equation}
R\left( Q\right) =b\left( \left( \theta +Q\right) ^{\gamma }-\theta  \right) ,  \label{revenue-appr}
\end{equation}%
which approximates $R\left( Q\right) =bQ^{\gamma }$ for $\theta $ small.
We obtain from Lemma \ref{theo:eq1} $(iii)$ that for the revenue function as defined in \eqref{revenue-appr} and the investment costs being purely quadratic, which implies that $c_2=0,$ the $\eta$ from 
(\ref{steadystate/general}) is implicitly determined by
\begin{equation}
\eta \left( \theta +\eta c_{1}\right) ^{1-\gamma }-b\gamma =0.
\label{eta_implicit}
\end{equation}%
To establish the effect of the productivity parameter $\alpha $ on $K^{\ast
}\left( s\right) $ in the case of iso-elastic demand, we first determine how
$\eta $ depends on $\alpha .$ To do so, we first obtain from (\ref{k1}) that
$c_{1}$ is a quadratic function of $\alpha :$%
\begin{equation*}
c_{1}=f\alpha ^{2}
\end{equation*}%
with%
\begin{eqnarray*}
f &=&\frac{1}{2\beta _{0}\left( \mu +\lambda \right) }\left( \frac{1-\mu }{%
\mu ^{2}}\left( e^{-\mu \bar{s}}-1\right) +\frac{2\mu +\lambda -1}{\mu
\left( 2\mu +\lambda \right) }e^{-\left( 2\mu +\lambda \right) \bar{s}%
}\right) \\
&&+\frac{1}{2\beta _{0}\left( \mu +\lambda \right) }\left( \frac{1}{\mu }%
\bar{s}-\frac{1}{\left( 2\mu +\lambda \right) \left( \mu +\lambda \right) }+%
\frac{1-\left( \mu +\lambda \right) }{\mu \left( \mu +\lambda \right) }%
e^{-\left( \mu +\lambda \right) \bar{s}}\right) \\
&>&0.
\end{eqnarray*}%
This implies that we can rewrite (\ref{eta_implicit}) into%
\begin{equation}
\eta \left( \theta +\eta f\alpha ^{2}\right) ^{1-\gamma }-b\gamma =0.
\label{eta_k1_spec}
\end{equation}%
From the implicit function theorem we obtain that%
\begin{equation*}
\frac{\partial \eta }{\partial \alpha }=-\frac{2\left( 1-\gamma \right)
f\eta \alpha }{\eta \left( 1-\gamma \right) +f\alpha ^{2}\eta +\theta }<0,
\end{equation*}%
whereas we also conclude from (\ref{eta_k1_spec}) that%
\begin{equation}
\lim_{\alpha \rightarrow \infty }\eta \left( \alpha \right) =0.
\label{eta_v_limit}
\end{equation}

Now we are ready to establish how $K^{\ast }\left( s\right) $ depends on $%
\alpha .$ From (\ref{w1}) and (\ref{K*(a)pqgeneral}) we get%
\begin{equation*}
K^{\ast }\left( s\right) =\eta \alpha \varphi \left( s\right) ,
\end{equation*}%
with%
\begin{equation*}
\varphi \left( s\right) =\frac{1}{2\beta _{0}\left( \mu +\lambda \right) }%
\left( (1-e^{-\left( \mu +\lambda \right) \bar{s}})e^{-\mu s}+\frac{%
1-e^{-\mu s}} {\mu } -\frac{e^{-\left( \mu +\lambda \right) \bar{s}}}{2\mu
+\lambda }\left( e^{\left( \mu +\lambda \right) s}-e^{-\mu s}\right) \right)
>0.
\end{equation*}%
Hence, we obtain%
\begin{eqnarray*}
\frac{\partial K^{\ast }\left( s\right) }{\partial \alpha } &=&\varphi
\left( s\right) \left( \frac{\partial \eta }{\partial \alpha }\alpha +\eta
\right) \\
&=&\varphi \left( s\right) \eta \left( -\frac{2\left( 1-\gamma \right)
f\alpha ^{2}}{\eta \left( 1-\gamma \right) +f\alpha ^{2}\eta +\theta }%
+1\right) \\
&=&\varphi \left( s\right) \eta \left( -\frac{2\left( 1-\gamma \right) f}{%
\frac{\eta }{\alpha ^{2}}\left( 1-\gamma \right) +f\eta +\frac{\theta }{%
\alpha ^{2}}}+1\right) .
\end{eqnarray*}%
Since $\frac{\eta }{\alpha ^{2}}\left( 1-\gamma \right) +f\eta +\frac{\theta
}{\alpha ^{2}}$ is decreasing in $\alpha ,$ and, due to (\ref{eta_v_limit}),
it also holds that
\begin{eqnarray*}
-\frac{2\left( 1-\gamma \right) f}{\frac{\eta }{\alpha ^{2}}\left( 1-\gamma
\right) +f\eta +\frac{\theta }{\alpha ^{2}}}+1 &>&0\text{ for }\alpha =0, \\
-\frac{2\left( 1-\gamma \right) f}{\frac{\eta }{\alpha ^{2}}\left( 1-\gamma
\right) +f\eta +\frac{\theta }{\alpha ^{2}}}+1 &<&0\text{ for }\alpha
\rightarrow \infty ,
\end{eqnarray*}

we can conclude that we have proved the following proposition.

\begin{prop}
\label{sens:prop3}\textit{In case of iso-elastic demand and purely quadratic
investment costs, the equilibrium distribution capital stock,} $K^{\ast }\left( s\right)
$\textit{, is increasing with the productivity parameter }$\alpha $ \textit{%
for }$\alpha \in \left[ 0,\hat{\alpha}\right] $\textit{, and decreasing with}
$\alpha $ \textit{for }$\alpha \in \left[ \hat{\alpha},\infty \right) $,
\textit{where} $\hat{\alpha}$ \textit{is implicitly given by}%
\begin{equation*}
-\frac{2\left( 1-\gamma \right) f\hat{\alpha}^{2}}{\eta \left( \hat{\alpha}%
\right) \left( 1-\gamma \right) +f\eta \left( \hat{\alpha}\right) +\theta }%
+1=0,
\end{equation*}%
\textit{with}%
\begin{eqnarray*}
f &=&\frac{1}{2\beta _{0}\left( \mu +\lambda \right) }\left( \frac{1-\mu }{%
\mu ^{2}}\left( e^{-\mu \bar{s}}-1\right) +\frac{2\mu +\lambda -1}{\mu
\left( 2\mu +\lambda \right) }e^{-\left( 2\mu +\lambda \right) \bar{s}}+%
\frac{1}{\mu }\bar{s}\right) \\
&&-\frac{1}{2\beta _{0}\left( \mu +\lambda \right) }\left( \frac{1}{\left(
2\mu +\lambda \right) \left( \mu +\lambda \right) }+\frac{1-\left( \mu
+\lambda \right) }{\mu \left( \mu +\lambda \right) }e^{-\left( \mu +\lambda
\right) \bar{s}}\right) \\
&>&0.
\end{eqnarray*}
\end{prop}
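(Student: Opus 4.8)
The plan is to reduce the whole question to the monotonicity analysis of a single scalar function of $\alpha$. Since purely quadratic investment costs mean $q_0=q_1\equiv0$, we have $c_2=0$, and Lemma~\ref{theo:eq1}(iii) tells us that the relevant $\eta$ in the equilibrium formula (\ref{steadystate/general}) is the unique positive root of $\eta(\theta+\eta c_1)^{1-\gamma}=b\gamma$. First I would observe from the closed form (\ref{k1}) that $c_1=f\alpha^2$, where $f>0$ is the $\alpha$-independent constant written out in the statement; the positivity of $f$ is checked exactly as the positivity of $c_1$ is checked in Section~\ref{lqr}. Substituting, the defining relation becomes $\eta(\theta+\eta f\alpha^2)^{1-\gamma}=b\gamma$. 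Applying the implicit function theorem to $G(\eta,\alpha):=\eta(\theta+\eta f\alpha^2)^{1-\gamma}-b\gamma$ (whose $\eta$-derivative is strictly positive along the positive solution branch) gives $\partial\eta/\partial\alpha=-\,\frac{2(1-\gamma)f\eta\alpha}{\eta(1-\gamma)+f\alpha^2\eta+\theta}<0$, and letting $\alpha\to\infty$ in the defining relation forces $\eta(\alpha)\to0$.

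Next I would use the equilibrium formula in the form $K^{\ast}(s)=\eta\alpha\,\varphi(s)$, with $\varphi$ the explicit combination of exponentials appearing after (\ref{K*(a)pqgeneral}), and record that $\varphi(s)>0$ for every $s\in[0,\bar s]$ --- a routine estimate entirely parallel to the positivity of $w_1$ in (\ref{w_1}). Differentiating, $\partial K^{\ast}(s)/\partial\alpha=\varphi(s)\big(\alpha\,\partial\eta/\partial\alpha+\eta\big)=\varphi(s)\,\eta\,\Phi(\alpha)$, where
\[
\Phi(\alpha)=1-\frac{2(1-\gamma)f}{\tfrac{\eta}{\alpha^2}(1-\gamma)+f\eta+\tfrac{\theta}{\alpha^2}}.
\]
Because $\varphi(s)>0$ and $\eta>0$ for all $\alpha$, the sign of $\partial K^{\ast}(s)/\partial\alpha$ coincides with the sign of $\Phi(\alpha)$, uniformly in $s$; this is the structural point that makes the increasing-then-decreasing behaviour hold simultaneously for all ages $s$.

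It then remains to study the scalar function $\Phi$. Its denominator $D(\alpha):=\tfrac{\eta}{\alpha^2}(1-\gamma)+f\eta+\tfrac{\theta}{\alpha^2}$ is strictly positive for every finite $\alpha>0$ (all three summands are positive, since $\eta>0$, $f>0$, $\theta>0$, $\gamma\in(0,1)$) and strictly decreasing in $\alpha$: the two terms carrying $\alpha^{-2}$ decrease, and $f\eta$ decreases because $\eta$ does. Hence $\Phi=1-2(1-\gamma)f/D$ is strictly increasing. As $\alpha\to0^{+}$ the $\theta/\alpha^2$ term makes $D\to+\infty$, so $\Phi\to1>0$; as $\alpha\to\infty$ we have $\eta\to0$, hence $D\to0^{+}$ and $\Phi\to-\infty<0$. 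By monotonicity $\Phi$ has exactly one zero $\hat\alpha>0$, which is precisely the point defined in the proposition by $\Phi(\hat\alpha)=0$, and the explicit $f$ in the statement is the one identified above. For $\alpha<\hat\alpha$ one gets $\Phi(\alpha)>0$, so $K^{\ast}(s)$ is strictly increasing in $\alpha$ for every $s$; for $\alpha>\hat\alpha$ one gets $\Phi(\alpha)<0$, so $K^{\ast}(s)$ is strictly decreasing.

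I expect the only genuinely delicate points to be: (a) justifying rigorously that $\eta(\alpha)$ is well defined, positive, and differentiable with $\eta(\alpha)\to0$ as $\alpha\to\infty$ --- this rests on the unique-positive-root structure from Remark~\ref{rm:scuni} together with the implicit function theorem applied to $G$; and (b) the bookkeeping to confirm $f>0$ and $\varphi(s)>0$ for all $s\in[0,\bar s]$, which is entirely analogous to the computations already carried out for the linear-quadratic case in Section~\ref{lqr}. Everything else is the elementary sign analysis of $\Phi$ described above.
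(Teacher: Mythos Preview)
Your approach is essentially identical to the paper's: the same factorization $c_1=f\alpha^2$, the same implicit-function computation for $\partial\eta/\partial\alpha$, the same representation $K^\ast(s)=\eta\alpha\varphi(s)$ with $\varphi(s)>0$, and the same sign analysis of $\Phi(\alpha)=1-2(1-\gamma)f/D(\alpha)$ via monotonicity of $D$ and the endpoint limits. One slip to correct: since $D$ is strictly \emph{decreasing}, $1/D$ is increasing and hence $\Phi=1-2(1-\gamma)f/D$ is strictly \emph{decreasing}, not increasing; this is what is actually consistent with your limits $\Phi(0^+)=1>0$ and $\Phi(\infty)=-\infty$, and with your final sign conclusion.
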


\begin{rem}
Concerning the stability of the equilibrium distribution $K^*$, we observe that Lemma \ref{stablin} 
applies here and may imply, depending on the value of the parameters, that $K^*$ is  locally, or even globally, stable.\end{rem}
\appendix

\section{Proofs of Subsection \protect\ref{subsec:MP}}

We here present a detailed description of the material in Subsection \ref%
{subsec:MP} as well as all the proofs of the results there stated. Firstly
we note that solutions of the ODEs in (\ref{CO}) have to be intended in mild
form. That is expressed in the following definitions.

\begin{defi}
\label{df:mild} Let Assumptions \ref{asst2} be satisfied. Let $t\ge 0$ and
let $u\in L_\lambda^p(t,+\infty;U)$. The mild solution of (\ref{eq:statoV'})
is the function $y\in L^1_{loc}(t,+\infty;V^\prime)$ given by
\begin{equation}  \label{mildES}
y(\tau)=e^{(\tau-t)A}x+\int_t^\tau e^{(\tau-\sigma)A}Bu(\sigma) d\sigma,\ \
\tau\in[t,+\infty[ .\
\end{equation}
The mild solution of (\ref{cost})-(\ref{tc}) is the function $%
\pi:[t,+\infty[ \to V$ given by
\begin{equation}  \label{pmild}
\pi(\tau)=\int_\tau^{+\infty}
e^{(A_1^*-\lambda)(\sigma-\tau)}g_0^\prime(y(\sigma))d\sigma.
\end{equation}
A mild solution to the closed loop equation (\ref{CLE}) is a function $y\in
L^1_{loc}(t,+\infty;V^\prime)$ satisfying
\begin{equation}  \label{CLEmild}
y(\tau)=e^{(\tau-t)A}x+\int_t^\tau e^{(\tau-\sigma)A} B(h_0^*)^{\prime} {%
(-B^*\Psi^\prime(y(s))) }d\sigma,\ \ \tau\in[t,+\infty[ .\
\end{equation}%
\end{defi}

\bigbreak
\color{black}

\begin{lemma}
\label{p^*def} Let Assumption \ref{asst2} hold, assume $p\geq 2,$ $q=\frac{p%
}{p-1}$ and $\lambda >(2\omega) \vee \omega $. Let also $u \in
L_\lambda^p(t,+\infty;U)$. Then:

$(i)$ $\pi $ given by (\ref{pmild}) is well defined and belongs to $%
C^{0}([t,+\infty[;V)$;

$(ii)$ if $p> 2$ and $\lambda>0$, then $\pi\in L^q_\lambda(t,+\infty;V)$;

$(iii)$ if $p=2$ and $\lambda>0$, then $\pi\in
L^2_{\lambda+\varepsilon}(t,+\infty;V)\cap L^2(t,T;V),\ \forall \ T<+\infty,
\ \varepsilon>0$;

$(iv)$ if $p=2$, $\lambda>0$ and $\omega <0$, then $\pi\in
L^2_{\lambda}(t,+\infty;V).$
\end{lemma}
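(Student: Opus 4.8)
\emph{Proof plan.} The argument is driven by a two–sided use of Assumption \ref{asst2}: on the state side $\|e^{\tau A}\|_{\mathcal L(V^\prime)}\le e^{\omega\tau}$ and, dually, $\|e^{\tau A_1^*}\|_{\mathcal L(V)}\le e^{\omega\tau}$ for $\tau\ge0$ (the latter because $A_1^*$ is the adjoint of $A$ in the pairing $\langle\cdot,\cdot\rangle$), together with the fact that $g_0\in\Sigma_0(V^\prime)$ forces the linear growth bound $|g_0^\prime(z)|_V\le|g_0^\prime(0)|_V+[g_0^\prime]\,|z|_{V^\prime}$ for all $z\in V^\prime$. For $(i)$ I would first recall that, since $u\in L^p_\lambda(t,+\infty;U)\subset L^1_{loc}(t,+\infty;U)$, the mild solution $y$ in \eqref{mildES} belongs to $C([t,+\infty);V^\prime)$ and satisfies
\[
|y(\sigma)|_{V^\prime}\le e^{\omega(\sigma-t)}|x|_{V^\prime}+\|B\|_{\mathcal L(U,V^\prime)}\int_t^\sigma e^{\omega(\sigma-r)}|u(r)|_U\,dr,\qquad \sigma\ge t.
\]
A Hölder estimate with exponents $p,q$ on the convolution, after pulling out the factor $e^{-\lambda r/p}$, gives $|y(\sigma)|_{V^\prime}\le C(1+\sigma)e^{(\omega\vee(\lambda/p))\sigma}$, with $C$ depending on $|x|_{V^\prime}$, $\|u\|_{L^p_\lambda}$ and the data. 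Inserting this and the linear-growth bound on $g_0^\prime$ into \eqref{pmild} and using $\|e^{(A_1^*-\lambda)r}\|_{\mathcal L(V)}\le e^{(\omega-\lambda)r}$, the integral converges absolutely as soon as $\lambda>\omega$ and $\omega\vee(\lambda/p)<\lambda-\omega$; the second condition is equivalent to $\lambda>2\omega$ together with $\omega<\lambda/q$, and both follow from $\lambda>(2\omega)\vee\omega$ and $p\ge2$. This makes $\pi$ well defined, with $|\pi(\tau)|_V\le C(1+\tau)(1+e^{(\omega\vee(\lambda/p))\tau})$; writing $\pi(\tau)=\int_0^{+\infty}e^{(A_1^*-\lambda)r}g_0^\prime(y(\tau+r))\,dr$, continuity of $\tau\mapsto\pi(\tau)$ in $V$ then follows by dominated convergence, using continuity of $y$ and of $g_0^\prime$, strong continuity of $e^{rA_1^*}$, and the locally uniform integrable majorant $e^{(\omega-\lambda)r}C(1+\tau+r)(1+e^{(\omega\vee(\lambda/p))(\tau+r)})$.

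For parts $(ii)$–$(iv)$ I would add $\lambda>0$, so that $e^{-\lambda\tau}\,d\tau$ is a finite measure on $[t,+\infty)$ and hence $L^p_\lambda\hookrightarrow L^2_\lambda$; in particular $u\in L^2_\lambda(t,+\infty;U)$. Splitting $y=e^{(\cdot-t)A}x+Y$ with $Y$ the convolution term, the map $\sigma\mapsto e^{-\lambda\sigma/2}|Y(\sigma)|_{V^\prime}$ is pointwise dominated by the convolution of $\sigma\mapsto e^{-\lambda\sigma/2}|u(\sigma)|_U\in L^2$ with the kernel $\kappa$ given by $\kappa(r)=e^{(\omega-\lambda/2)r}$ for $r\ge0$ and $\kappa(r)=0$ for $r<0$, which lies in $L^1(\mathbb R)$ precisely because $\lambda>2\omega$; Young's inequality then gives $Y\in L^2_\lambda(t,+\infty;V^\prime)$, and $e^{(\cdot-t)A}x\in L^2_\lambda$ as well (again using $\lambda>2\omega$). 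Hence $y\in L^2_\lambda(t,+\infty;V^\prime)$, and by the linear growth of $g_0^\prime$ and finiteness of the measure, $g_0^\prime(y)\in L^2_\lambda(t,+\infty;V)$.

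Estimating $\pi$ by the same weight-factoring, $\tau\mapsto e^{-\lambda\tau/2}|\pi(\tau)|_V$ is pointwise dominated by the cross-correlation of $\sigma\mapsto e^{-\lambda\sigma/2}|g_0^\prime(y(\sigma))|_V\in L^2$ against $\kappa$, so Young's inequality (applied to $\kappa$ reflected) gives $\pi\in L^2_\lambda(t,+\infty;V)$. From this all three assertions follow: when $p>2$ one has $q=\frac{p}{p-1}<2$, hence $L^2_\lambda\hookrightarrow L^q_\lambda$ and $(ii)$ holds; when $p=2$, $L^2_\lambda\hookrightarrow L^2_{\lambda+\varepsilon}$ for every $\varepsilon>0$ yields the first part of $(iii)$, while $\pi\in L^2(t,T;V)$ for every finite $T$ is immediate from the continuity in $(i)$; and $(iv)$ is already contained in $\pi\in L^2_\lambda$.

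The genuinely delicate point is the exponent bookkeeping in the borderline case $p=2$ (equivalently $q=2$): there $y$ and $\pi$ sit right at the edge of $L^2_\lambda$, and the convolution estimates must be run through Young's inequality — a naive pointwise/Hölder bound delivers only $|\pi(\tau)|_V\lesssim e^{\lambda\tau/2}$, which lies in $L^2_{\lambda+\varepsilon}$ but not obviously in $L^2_\lambda$, hence the formulation of $(iii)$; the sharper Young estimate, which needs no more than $\lambda>2\omega$, upgrades this to $\pi\in L^2_\lambda$ and in particular covers $(iv)$. Everything else reduces to routine convolution estimates for strongly continuous semigroups.
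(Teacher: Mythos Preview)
Your proof is correct and takes a cleaner route than the paper's. The paper argues pointwise throughout: for $(i)$ it bounds $|y(\sigma)|_{V'}$ via H\"older, splitting into the cases $\lambda=\omega p$ and $\lambda\ne\omega p$, and deduces integrability of the integrand in \eqref{pmild}; for $(ii)$ it bounds $|\pi(\tau)|_V$ by three explicit terms $\gamma_1+\gamma_2+\gamma_3$ and checks each lies in $L^q_\lambda$ by exponent chasing; for $(iv)$ it resorts to Jensen's inequality and Fubini--Tonelli, and that path genuinely requires $\omega<0$ at the final step. Your approach instead runs everything through Young's convolution inequality: once you observe that the weight $e^{-\lambda\tau/2}$ turns both the map $u\mapsto Y$ and the map $g_0'(y)\mapsto\pi$ into convolutions (respectively cross-correlations) against the kernel $\kappa(r)=e^{(\omega-\lambda/2)r}\mathbf{1}_{r\ge0}\in L^1$ (integrable exactly when $\lambda>2\omega$), you get $y\in L^2_\lambda$, then $g_0'(y)\in L^2_\lambda$, then $\pi\in L^2_\lambda$ directly, under no more than $\lambda>0$ and $\lambda>2\omega$. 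This is strictly stronger than the lemma as stated: your argument shows that the extra hypothesis $\omega<0$ in $(iv)$ is superfluous, and that $(iii)$ is already a weakening of what you prove. The paper's approach has the merit of being entirely elementary and of delivering explicit pointwise growth rates for $|\pi(\tau)|_V$, which can be useful elsewhere; yours is shorter, avoids the case distinctions, and yields a sharper conclusion.
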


\begin{proof}

We first prove $(i)$. Note that by assumptions on $g_0^\prime$, the
integrand in (\ref{pmild}) can be estimated as follows
\begin{equation*}
\vert e^{(A_1^*-\lambda)(\sigma-\tau)} g_0^\prime(y(\sigma))\vert_V\le \vert
g_0^\prime\vert_{\mathcal{B}_1}e^{-(\lambda-\omega)(\sigma-\tau)}(1+\vert
y(\sigma)\vert_{_{V^\prime}}).
\end{equation*}
Since $\lambda >\omega$, to prove the first assertion,  i.e. that $\pi(\cdot)
$ is well defined, it is enough to show that, for every $\tau\ge t$, the map
$\sigma\mapsto e^{-(\lambda-\omega)(\sigma-\tau)}\vert
y(\sigma)\vert_{_{V^\prime}}$ is in $L^1(\tau,+\infty)$.

By Assumption \ref{asst2} (see also \cite[Lemma 4.5]{FaGo2}) one has
\begin{equation}  \label{eq:stimanormay}
\begin{split}
\vert y(\sigma)\vert_{_{V^\prime}} & \le e^{\omega (\sigma-t)}\vert
x\vert_{_{V^\prime}} +\int_t^\sigma e^{\omega (\sigma- r)}\|B\|_{\mathcal{L}%
(U,V^{\prime })}\vert u(r)\vert_Udr \\
&\le Ce^{\omega \sigma}\left(\vert x\vert_{_{V^\prime}} +\int_t^\sigma
e^{-\omega r}\vert u(r)\vert_Udr\right) \\
&\le C_1e^{\omega \sigma}(1+\rho(t,\sigma)^{\frac{1}{q}})
\end{split}%
\end{equation}
for suitable constants $C$ (depending only on $t$) and $C_1$ (depending on $t
$, $x$, and $u$), where
\begin{equation*}
\rho(t,\sigma)=%
\begin{cases}
\vert e^{q\left(\frac{\lambda}{p}-\omega\right)t}- e^{q\left(\frac{\lambda}{p%
}-\omega\right)\sigma}\vert & \lambda\not=\omega p \\
\vert t-\sigma\vert & \lambda=\omega p.%
\end{cases}%
\end{equation*}
Hence
\begin{equation*}
e^{-(\lambda-\omega)\sigma}\vert y(\sigma)\vert_{_{V^\prime}}\le
C_1e^{-(\lambda-2\omega)\sigma}(1+\rho(t,\sigma)^{\frac{1}{q}}),
\end{equation*}
so that in the case $\lambda\not=\omega p$ one obtains
\begin{equation}  \label{rho31}
e^{-(\lambda-2\omega)\sigma}\rho(t,\sigma)^{\frac{1}{q}} = \vert
e^{-q(\lambda-2\omega)\sigma}e^{q\left(\frac{\lambda}{p}-\omega%
\right)t}-e^{-(\lambda-q\omega)\sigma}\vert^{\frac{1}{q}} \le C_2\left[%
e^{-(\lambda-2\omega)\sigma} \vee e^{-\frac{1}{q}(\lambda-q\omega)\sigma}%
\right]
\end{equation}
for a suitable constant $C_2$, whereas in the case $\lambda=\omega p$ one
has
\begin{equation}  \label{rho32}
e^{-(\lambda-2\omega)\sigma}\rho(t,\sigma)^{\frac{1}{q}}=
e^{-(\lambda-2\omega)\sigma}\vert t-\sigma\vert^{\frac{1}{q}}.
\end{equation}
Since $\lambda>(2\omega) \vee \omega $ then also $\lambda > q \omega$ since $%
q=p/(p-1)\in (1,2]$. Hence, for each $\tau \ge t$, the integrand in %
\eqref{pmild} is in $L^1(\tau,+\infty;V)$.

%Similar estimates lead to
The proof that $\pi\in C^0([t,+\infty);V)$ follows from the dominated
convergence theorem and the fact that the above estimates does not depend on
$\tau$, when $\tau$ is taken in any bounded interval.

\medskip

Now we prove $(ii)$. We start by showing that  $p>2$ implies $\pi\in
L^q_\lambda(t,+\infty;V)$. From the estimates above, one has
\begin{equation}
\begin{split}  \label{rhonew}
\vert \pi(\tau)\vert_V&\le \frac{\vert g_0^\prime\vert_{\mathcal{B}_1}}{%
\lambda-\omega}+ \frac{\vert g_0^\prime\vert_{\mathcal{B}_1} C_1
e^{\omega\tau}}{\lambda-2\omega}+\vert g_0^\prime\vert_{\mathcal{B}_1} C_1
e^{(\lambda-\omega)\tau}
\int_\tau^{+\infty}e^{-(\lambda-2\omega)\sigma}\rho(t,\sigma)^{\frac{1}{q}%
}d\sigma \\
&\equiv \gamma_1(\tau)+\gamma_2(\tau)+\gamma_3(\tau).
\end{split}%
\end{equation}
The function $\gamma_1$ is trivially in $L^q_\lambda(t,+\infty;\mathbb{R})$,
since $\lambda>0$. The function $\gamma_2$ is in $L^q_\lambda(t,+\infty;%
\mathbb{R})$ since, as observed above, it must be $\lambda > q \omega$ as $%
q\in (1,2]$. Regarding $\gamma_3$, in the case $\lambda=\omega p$, it must
be necessarily $\omega>0$ (if not we cannot have $\lambda>\omega$). Let then
$\delta>0$ such that $\lambda>2\omega+2\delta$. Since, by simple
computations, $(\sigma -t)^{1/q}e^{-\delta \sigma} \le
\sigma^{1/q}e^{-\delta \sigma}\le (qe\delta)^{-1/q}$, we then have
\begin{equation*}
\begin{split}
\int_\tau^{+\infty}e^{-(\lambda-2\omega)\sigma}\vert t-\sigma\vert^{\frac{1}{%
q}}d\sigma \le
(qe\delta)^{-1/q}\int_\tau^{+\infty}e^{-(\lambda-2\omega-\delta)\sigma}d%
\sigma \le (qe\delta)^{-1/q}\frac{e^{-(\lambda-2\omega-\delta)\tau}}{%
\lambda-2\omega-\delta}
\end{split}%
\end{equation*}
Hence, in this case, for a suitable constant $C_3$ one has
\begin{equation}  \label{rho33}
e^{-\lambda\tau}\gamma_3(t)^q \le C_3
e^{-\lambda\tau}e^{q(\lambda-\omega)\tau} e^{-q(\lambda-2\omega-\delta)\tau}
= C_3 e^{-[\lambda-(\omega+\delta) q]\tau};
\end{equation}
the last is an integrable function in $[t,+\infty)$ by the choice of $\delta$%
. In the case $\lambda\not=\omega p$, by means of (\ref{rho31}) one has
\begin{equation}  \label{rho33new}
\int_\tau^{+\infty}e^{-(\lambda-2\omega)\sigma}\rho(t,\sigma)^{\frac{1}{q}%
}d\sigma \le C_2 \int_\tau^{+\infty} \left[e^{-(\lambda-2\omega)\sigma}\vee
e^{-\frac1q(\lambda-q\omega)\sigma}\right]d\sigma.
\end{equation}
By \eqref{rhonew} we then have, for a suitable constant $C_4>0$,
\begin{equation}
\begin{split}  \label{rho34}
e^{-\lambda\tau}\gamma_3(\tau)^q \le C_4
e^{-\lambda\tau}e^{q(\lambda-\omega)\tau} \left[e^{-(\lambda-2\omega)\tau}%
\vee e^{-\frac1q(\lambda-q\omega)\tau}\right]^q \le C_4 \left[%
e^{-(\lambda-q\omega)\tau}\vee e^{-\lambda(2-q)\tau}\right],
\end{split}%
\end{equation}
which implies $\gamma_3\in L^q_\lambda(t,+\infty;\mathbb{R})$ also in this
case.

\medskip To prove $(iii)$ it sufficies to observe that $p=2$ then (\ref%
{rho33})-(\ref{rho34}) computed with $q=2$ and $\lambda+\varepsilon$ in
place of $\lambda$ imply promptly $\pi\in
L^2_{\lambda+\varepsilon}(t,+\infty;V)$, $\forall\varepsilon>0$.

\medskip

Finally we prove $(iv)$. Let $p=2$. Observe that, for $\tau\ge t$,
\begin{equation*}
|\pi(\tau)|_V = \left| \int_\tau^{+\infty} e^{(A_1^*-\lambda)(\sigma-\tau)}
g_0^\prime(y(\sigma))d\sigma \right|_V \le \vert g_0^\prime\vert_{\mathcal{B}%
_1} \int_\tau^{+\infty} e^{-(\lambda-\omega)(\sigma-\tau)} (1+\vert
y(\sigma)\vert_{_{V^\prime}})d\sigma.
\end{equation*}
which implies
\begin{equation*}
e^{-\lambda \tau}|\pi(\tau)|_V^2 \le e^{-\lambda \tau} 2\vert
g_0^\prime\vert_{\mathcal{B}_1}^2 \left[\frac{1}{(\lambda-\omega)^2}
+\left(\int_\tau^{+\infty} e^{-(\lambda-\omega)(\sigma-\tau)} \vert
y(\sigma)\vert_{_{V^\prime}}d\sigma\right)^2\right].
\end{equation*}
Since $\lambda>0$, the first term is integrable on $[t,+\infty[$. Concerning
the second term we exploit Jensen's inequality to get
\begin{equation}  \label{eq:Jensen1}
\left(\int_\tau^{+\infty} e^{-(\lambda-\omega)(\sigma-\tau)} \vert
y(\sigma)\vert_{_{V^\prime}}d\sigma\right)^2 \le \frac{1}{\lambda-\omega}%
\int_\tau^{+\infty} e^{-(\lambda-\omega)(\sigma-\tau)} \vert
y(\sigma)\vert_{_{V^\prime}}^2d\sigma
\end{equation}
Hence
\begin{equation}  \label{eq:intpi}
\int_t^{+\infty}e^{-\lambda \tau}|\pi(\tau)|_V^2\,d\tau \le
\int_t^{+\infty}e^{-\lambda \tau} 2\vert g_0^\prime\vert_{\mathcal{B}_1}^2 %
\left[\frac{1}{(\lambda-\omega)^2} + \frac{1}{\lambda-\omega}%
\int_\tau^{+\infty} e^{-(\lambda-\omega)(\sigma-\tau)} \vert
y(\sigma)\vert_{_{V^\prime}}^2 d\sigma \right]d\tau.
\end{equation}
The first term of the right hand side is $2\vert g_0^\prime\vert_{\mathcal{B}%
_1}^2 e^{-\lambda t}/(\lambda-\omega)^2$. To estimate the second we use
Fubini-Tonelli Theorem (see e.g. \cite[Theorem 1.33]{FabbriGozziSwiech17}),
recalling that the integrand here is positive. Indeed
\begin{equation*}
\int_t^{+\infty}e^{-\lambda \tau} \int_\tau^{+\infty}
e^{-(\lambda-\omega)(\sigma-\tau)} \vert y(\sigma)\vert_{_{V^\prime}}^2
d\sigma d\tau = \int_t^{+\infty} \vert y(\sigma)\vert_{_{V^\prime}}^2
e^{-(\lambda-\omega)\sigma} \int_t^{\sigma} e^{-\lambda \tau}
e^{(\lambda-\omega)\tau} d\tau d\sigma
\end{equation*}

\begin{equation*}
=\int_t^{+\infty} \vert y(\sigma)\vert_{_{V^\prime}}^2
e^{-(\lambda-\omega)\sigma} \frac{e^{-\omega t}-e^{-\omega \sigma}}{\omega}
\,d\sigma=:I_0 %=
%\frac{1}{\omega}\int_t^{+\infty} \vert y(\sigma)\vert_{_{V^\prime}}^2
%e^{-\lambda\sigma}(e^{\omega (\sigma-t)}-1)d\sigma
\end{equation*}
where, in the last step, we use that $\omega \ne 0$. To prove the claim it
is now enough to prove that the last integral $I_0$ is finite.
%right hand side of \eqref{eq:Jensen1}.
First of all, by \eqref{eq:stimanormay}, we have
\begin{equation}  \label{eq:stimanormay2}
\vert y(\sigma)\vert_{_{V^\prime}}^2 \le C_5e^{2\omega \sigma} \left[1
+\left(\int_t^\sigma e^{-\omega r}\vert u(r)\vert_U \,dr\right)^2\right] \\
\end{equation}
for a suitable constant $C_5$ (depending on $t$, $x$, and $u$). We apply
again Jensen's inequality getting
\begin{equation*}
\left(\int_t^\sigma e^{-\omega r}\vert u(r)\vert_U \,dr\right)^2 \le \frac{%
e^{-\omega t}-e^{-\omega \sigma}}{\omega} \int_t^\sigma e^{-\omega r}\vert
u(r)\vert_U^2 \,dr
\end{equation*}
Then we have
\begin{equation}  \label{eq:stimaexpy}
\begin{split}
I_0&\le C_5\left[ \int_t^{+\infty} e^{-(\lambda-3\omega)\sigma} \frac{%
e^{-\omega t}-e^{-\omega \sigma}}{\omega}\,d\sigma \right. \\[2mm]
&\left. +\int_t^{+\infty} e^{-(\lambda-3\omega)\sigma} \left(\frac{%
e^{-\omega t}-e^{-\omega \sigma}}{\omega}\right)^2 \int_t^\sigma e^{-\omega
r}\vert u(r)\vert_U^2 \,dr\, d\sigma\right]
\end{split}%
\end{equation}
Since $\lambda >0$ and $\omega<0$ the first integral of the right hand side
of \eqref{eq:stimaexpy} is finite, positive, and, its value is
\begin{equation*}
e^{-(\lambda-2\omega)t}\frac1\omega \left[\frac{1}{\lambda-3\omega}-\frac{1}{%
\lambda-2\omega}\right]>0.
\end{equation*}
Concerning the second integral of the right hand side of \eqref{eq:stimaexpy}
(recalling that the integrand is positive) we apply Fubini-Tonelli Theorem
again, to get that it is equal to
\begin{equation*}
I_1:=\int_t^{+\infty}e^{-\omega r}\vert u(r)\vert_U^2 \int_{r}^{+\infty}
e^{-(\lambda-3\omega)\sigma} \left(\frac{e^{-\omega t}-e^{-\omega \sigma}}{%
\omega}\right)^2 d\sigma dr
\end{equation*}
At this point we really need to use that $\omega<0$\footnote{%
This was not needed up to now. Above we only used the fact that $\lambda
>3\omega$ and, to simplify computations, $\omega \ne 0$.}. This implies that
the squared fraction above is smaller than $e^{-2\omega \sigma}/(\omega^2)$.
Hence we have
\begin{equation*}
I_1 \le \frac{1}{\omega^2} \int_t^{+\infty}e^{-\omega r}\vert
u(r)\vert_U^2\, \int_{r}^{+\infty} e^{-(\lambda-\omega)\sigma}d\sigma dr \le
\frac{1}{\omega^2(\lambda-\omega)} \int_t^{+\infty}e^{-\lambda r}\vert
u(r)\vert_U^2\, dr %\int_t^{+\infty}e^{-\omega r}\vert u(r)\vert_U^2
%\int_{r}^{+\infty}
%e^{-(\lambda-3\omega)\sigma}
%\left(\frac{e^{-\omega t}-e^{-\omega \sigma}}{\omega}\right)^2
%d\sigma dr
\end{equation*}
Since $u\in L^2_\lambda(t,+\infty;U)$ the above imply the finiteness of $I_1$
and, consequently, of $I_0$, which proves the claim. %
\end{proof}

\color{black}

%Let Assumption \ref{asst2} hold,
%assume $p\geq 2,$ $q=\frac{p}{p-1}$ and
%$\lambda >(2\omega) \vee \omega $.
%Let also $u \in L_\lambda^p(t,+\infty;U)$. Then:
%
%$(i)$ $\pi $ given by (\ref{pmild}) is well defined and belongs to
%$C^{0}([t,+\infty[;V)$;}
%
%$(ii)$ if $p> 2$ and $\lambda>0$, then $\pi\in L^q_\lambda(t,+\infty;V)$;
%
%$(iii)$ if $p=2$ and $\lambda>0$, then $\pi\in L^2_{\lambda+\varepsilon}(t,+\infty;V)\cap
%L^2(t,T;V),\ \forall \ T<+\infty, \ \varepsilon>0$;
%
%$(iv)$ if $p=2$, $\lambda>0$ and $\omega <0$,
%then $\pi\in L^2_{\lambda}(t,+\infty;V).$

\begin{theo}
{Let Assumption \ref{asst2} hold, let $p\geq 2,$ $q=\frac{p}{p-1}$ and $%
\lambda >(2\omega) \vee \omega $. Let also $u \in L_\lambda^p(t,+\infty;U)$.}
%In the same assumptions of Lemma \ref{p^*def},
If $\pi\in W^{1,1}(t,+\infty; V)$ satisfies $(\ref{cost})$ almost everywhere
in $[t,+\infty)$ and the transversality condition $(\ref{tc})$, then $\pi$
is given by (\ref{pmild}), that is $\pi$ is the mild solution of (\ref{cost}%
)-(\ref{tc}).
\end{theo}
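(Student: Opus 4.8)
The plan is to recognise $\pi$ as a (strong) solution of the backward linear equation governed by the generator $A_1^\ast-\lambda$ on $V$, to write the corresponding variation-of-constants identity on each finite interval $[\tau,T]$, and then to let $T\to+\infty$, the boundary term being annihilated by the transversality condition (\ref{tc}) and the integral converging thanks to Lemma \ref{p^*def}$(i)$.

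\medskip

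First I would record the ingredients. Since $A_1^\ast$ is the adjoint of $A$ with respect to the duality pairing $\langle\cdot,\cdot\rangle$, the semigroup $\{e^{A_1^\ast\sigma}\}_{\sigma\ge0}$ it generates on $V$ is the adjoint of $\{e^{A\sigma}\}_{\sigma\ge0}$, so Assumption \ref{asst2}$(1)$ gives $\Vert e^{A_1^\ast\sigma}\Vert_{\mathcal L(V)}\le e^{\omega\sigma}$, hence
\begin{equation*}
\Vert e^{(A_1^\ast-\lambda)\sigma}\Vert_{\mathcal L(V)}\le e^{(\omega-\lambda)\sigma},\qquad \sigma\ge0 .
\end{equation*}
Next, since $\pi\in W^{1,1}(t,+\infty;V)$ satisfies (\ref{cost}) a.e., for a.e.\ $\sigma$ one has $\pi(\sigma)\in D(A_1^\ast)$ with $A_1^\ast\pi(\sigma)=\lambda\pi(\sigma)-g_0'(y(\sigma))-\pi'(\sigma)\in V$, and $\sigma\mapsto A_1^\ast\pi(\sigma)$ lies in $L^1_{loc}(t,+\infty;V)$ (because $g_0'\in\mathcal B_1(V^\prime,V)$ by Assumption \ref{asst2}$(3)$, $y\in L^1_{loc}$, and $\pi'\in L^1_{loc}$). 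Fix $\tau\ge t$ and $T>\tau$ and consider the map $\sigma\mapsto e^{(A_1^\ast-\lambda)(\sigma-\tau)}\pi(\sigma)$ on $[\tau,T]$. A standard argument — e.g.\ replacing $A_1^\ast$ by its Yosida approximants $A_{1,n}^\ast=nA_1^\ast(n-A_1^\ast)^{-1}$, for which the product rule is elementary, and passing to the limit by dominated convergence (using that $\Vert n(n-A_1^\ast)^{-1}\Vert_{\mathcal L(V)}$ is bounded uniformly in large $n$ and $A_{1,n}^\ast\pi(\sigma)\to A_1^\ast\pi(\sigma)$ for a.e.\ $\sigma$) — shows that this map is absolutely continuous on $[\tau,T]$ with
\begin{equation*}
\frac{d}{d\sigma}\bigl[e^{(A_1^\ast-\lambda)(\sigma-\tau)}\pi(\sigma)\bigr]
= e^{(A_1^\ast-\lambda)(\sigma-\tau)}\bigl[(A_1^\ast-\lambda)\pi(\sigma)+\pi'(\sigma)\bigr]
= -\,e^{(A_1^\ast-\lambda)(\sigma-\tau)}g_0'(y(\sigma))
\end{equation*}
for a.e.\ $\sigma$, the last equality being (\ref{cost}). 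Integrating over $[\tau,T]$ yields
\begin{equation*}
e^{(A_1^\ast-\lambda)(T-\tau)}\pi(T)-\pi(\tau)=-\int_\tau^{T} e^{(A_1^\ast-\lambda)(\sigma-\tau)}g_0'(y(\sigma))\,d\sigma ,
\end{equation*}
that is, $\pi(\tau)=e^{(A_1^\ast-\lambda)(T-\tau)}\pi(T)+\int_\tau^{T} e^{(A_1^\ast-\lambda)(\sigma-\tau)}g_0'(y(\sigma))\,d\sigma$.

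\medskip

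Finally I would let $T\to+\infty$. For the boundary term, the semigroup bound gives
\begin{equation*}
\bigl|e^{(A_1^\ast-\lambda)(T-\tau)}\pi(T)\bigr|_V\le e^{(\omega-\lambda)(T-\tau)}|\pi(T)|_V=e^{(\lambda-\omega)\tau}\,\bigl|e^{(\omega-\lambda)T}\pi(T)\bigr|_V\,\longrightarrow\,0 \quad (T\to+\infty)
\end{equation*}
by the transversality condition (\ref{tc}). For the integral, Lemma \ref{p^*def}$(i)$ (whose hypotheses $p\ge2$, $q=p/(p-1)$, $\lambda>(2\omega)\vee\omega$, $u\in L^p_\lambda(t,+\infty;U)$ are exactly those assumed here) guarantees that $\sigma\mapsto e^{(A_1^\ast-\lambda)(\sigma-\tau)}g_0'(y(\sigma))$ belongs to $L^1(\tau,+\infty;V)$, so the integral over $[\tau,T]$ converges in $V$, as $T\to+\infty$, to $\int_\tau^{+\infty}e^{(A_1^\ast-\lambda)(\sigma-\tau)}g_0'(y(\sigma))\,d\sigma$. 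Passing to the limit then gives $\pi(\tau)=\int_\tau^{+\infty}e^{(A_1^\ast-\lambda)(\sigma-\tau)}g_0'(y(\sigma))\,d\sigma$ for every $\tau\ge t$, which is exactly (\ref{pmild}).

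\medskip

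The only non-routine point is the absolute-continuity/product-rule step for $\sigma\mapsto e^{(A_1^\ast-\lambda)(\sigma-\tau)}\pi(\sigma)$: since $\{e^{(A_1^\ast-\lambda)\sigma}\}$ need not be norm-continuous, one cannot differentiate directly from the mere continuity of $\pi$, and the Yosida regularisation (equivalently, the standard identification of strong and mild solutions of linear evolution equations, see e.g.\ \cite{Pazy} or \cite{EN}) is what makes the integration by parts rigorous. All the rest — the semigroup growth estimate, the vanishing of the boundary term, and the convergence of the integral — is immediate from Assumption \ref{asst2}, the transversality condition (\ref{tc}), and Lemma \ref{p^*def}.
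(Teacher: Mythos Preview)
Your proof is correct and follows essentially the same route as the paper's: derive the variation-of-constants identity $\pi(\tau)=e^{(A_1^\ast-\lambda)(T-\tau)}\pi(T)+\int_\tau^T e^{(A_1^\ast-\lambda)(\sigma-\tau)}g_0'(y(\sigma))\,d\sigma$ on each finite interval, then let $T\to+\infty$, killing the boundary term via (\ref{tc}) and the semigroup bound, and using the estimates behind Lemma~\ref{p^*def}$(i)$ for the convergence of the integral. The paper simply invokes the variation-of-constants formula without further comment, whereas you supply the Yosida-approximation justification; this extra care is appropriate but not a different approach.
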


\begin{proof}
By variation of constants formula, any $\pi$ satisfying $(\ref{cost})$ a.e.
must also satisfy
\begin{equation}  \label{addi}
\pi(\tau)=e^{(A_1^*-\lambda) (T-\tau)}\pi(T)+\int_\tau^{T}
e^{(A_1^*-\lambda)(\sigma-\tau)}g_0^\prime(y(\sigma))d\sigma, \ \forall
T>t,\ \forall \tau\in[t,T].
\end{equation}
Note that $(\ref{tc})$ implies
\begin{equation*}
\lim_{T\to+\infty}\vert e^{(A_1^*-\lambda)
(T-\tau)}\pi(T)\vert_V\le\lim_{T\to+\infty} e^{(\omega-\lambda)
(T-\tau)}\vert \pi(T)\vert_V=0.
\end{equation*}
hence by passing to limits as $T\to+\infty$ in (\ref{addi}) one derives
\begin{equation*}
\pi(\tau)=\lim_{T\to+\infty}\int_\tau^{T}
e^{(A_1^*-\lambda)(\sigma-\tau)}g_0^\prime(y(\sigma))d\sigma=\int_\tau^{+%
\infty} e^{(A_1^*-\lambda)(\sigma-\tau)}g_0^\prime(y(\sigma))d\sigma
\end{equation*}
where the last equality follows from estimates (\ref{rho31})-(\ref{rho32}).
\end{proof}

We are now ready to prove the Maximum Principle.

\bigskip

\noindent\textbf{Proof of Theorem \ref{Pmp} (Maximum Principle)}

Let $K:L^p_\lambda(t,+\infty;U)\to \mathbb{R}\cup\{+\infty\}$, and $%
G:L^p_\lambda(t,+\infty;U)\to \mathbb{R}\cup\{+\infty\}$ be defined by
\begin{equation*}
K(u):= \int_t^{+\infty} e^{-\lambda\tau}h_0(u(\tau))d\tau, \qquad
G(u):=\int_t^{+\infty}e^{-\lambda\tau}g_0(y(\tau;t,x,u))d\tau
\end{equation*}
so that for all $u\in dom(K)\cap dom(G)$ we have
\begin{equation}  \label{eq:subdiffinclusion}
J(u)=K(u)+G(u),\ and\ \partial J(u)\supseteq \partial K(u)+\partial G(u).
\end{equation}

\emph{Claim 1:} {For} $u \in int\, dom(K)$ we have
\begin{equation}  \label{subdiff2}
\begin{split}
\partial K(u)&=S,\ \ where \\
S\equiv\{ \varphi\in L^q_\lambda(t,+\infty;U)\ &:\ \varphi(\tau)\in \partial
h_0(u(\tau)),\ for \; a.e.\ \tau\in[t,+\infty)\} \\
\end{split}%
\end{equation}
Indeed
\begin{equation}  \label{subdiff}
\begin{split}
\partial K(u)&=\bigg\{\varphi\in L^q_\lambda(t,+\infty;U)\ :\  \\
&\int_t^{+\infty} {e^{-\lambda \tau}}\big[h_0(w(\tau))-h_0(u(\tau))
-(\varphi(\tau)\vert w(\tau)-u(\tau))_U\big]d\tau\ge0,\ \forall w\in
L^p_\lambda(t,+\infty;U)\bigg\},
\end{split}%
\end{equation}
so that $S\subset\partial K(u)$ is straightforward. To show the reverse
inclusion, we let $\varphi$ be any fixed element of $\partial K(u)$, $E$ any
measurable subset of $[t,+\infty)$, and we set, for any $w\in
L^p_\lambda(t,+\infty;U)$,
\begin{equation}
\tilde w(\tau)=
\begin{cases}
u(\tau), & \tau\not\in E \\
w(\tau), & \tau\in E%
\end{cases}%
\end{equation}
Clearly we still have $\tilde w \in L^p_\lambda(t,+\infty;U)$, hence we
derive
\begin{equation*}
\int_{E} {e^{-\lambda \tau}}\big[h_0(w(\tau))-h_0(u(\tau))
-(\varphi(\tau)\vert w(\tau)-u(\tau))_U\big]d\tau\ge0,\ \forall w\in
L^p_\lambda(t,+\infty;U).
\end{equation*}
Since $E$ and $w$ where arbitrarily chosen, the above implies
\begin{equation*}
h_0(w(\tau))-h_0(u(\tau)) -(\varphi(\tau)\vert w(\tau)-u(\tau))_U\ge0,\ \
for \; a.e. \ \tau\in[0,+\infty)
\end{equation*}
that is, $\varphi(\tau)\in\partial h_0(\tau)$ for almost every $\tau \ge0$.

\bigskip

\emph{Claim 2: Let $(u,y)$ be admissible at $(t,x)$. Assume that there
exists $\pi \in L^q_\lambda(t,+\infty;V)$ such that $(\pi,u,y)$ satisfies (%
\ref{CO}). Then $(u,y)$ is optimal at $(t,x)$.}

\medskip

{Let
%$\pi\in L^q_\lambda(t,+\infty;V)$ be the co-state associated to a couple
%$(u,y)$ satisfying (\ref{CO}), and let
$v $ be any control in $dom(G)$.} Then
\begin{equation}  \label{eq:Gv-Gu}
\begin{split}
{G(v)-G(u)}&=\int_t^{+\infty}\left[g_0(y(\tau;v))-g_0(y(\tau;u))\right]
e^{-\lambda \tau}d\tau \\
&\ge \int_t^{+\infty}\langle g_0^\prime(y(\tau;u)),\int_t^\tau
e^{(\tau-\sigma)A}B (v(\sigma)-u(\sigma))d\sigma\rangle e^{-\lambda
\tau}d\tau \\
&=\int_t^{+\infty}\int_\sigma^{+\infty} \big( B^*e^{(\tau-\sigma)A_1^*}g_0^%
\prime(y(\tau;u))\big\vert v(\sigma)-u(\sigma)\big)_U e^{-\lambda\tau}d\tau
d\sigma \\
&=\int_t^{+\infty} \big(\int_\sigma^{+\infty}
B^*e^{(\tau-\sigma)(A_1^*-\lambda)}g_0^\prime(y(\tau;u))d \tau \big\vert %
v(\sigma)-u(\sigma)\big)_U e^{-\lambda\sigma}d\sigma \\
&=\int_t^{+\infty}\big( B^* \pi (\sigma)\big\vert v(\sigma)-u(\sigma)\big)_U
e^{-\lambda\sigma}d\sigma \\
&=\langle B^*\pi, v-u\rangle_{L^q_\lambda (t,+\infty;U),
L^p_\lambda(t,+\infty;U)}
\end{split}%
\end{equation}
where we could exchange the order of integration since $\pi$ is in $%
L^q_\lambda(t,+\infty;V)$ by assumption. Then we proved that
\begin{equation*}
B^*\pi\in \partial G(u).
\end{equation*}
{Now, by \eqref{CO}} %the definition of extremal couple,
we also know that $-B^*\pi(\sigma)\in\partial h_0(u(\sigma))$ almost
everywhere in $[t,+\infty[$; hence, by Claim 1, we get $-B^*\pi\in \partial
K(u)$. By \eqref{eq:subdiffinclusion} it follows that $\partial J(u)\ni 0$,
that is, $u$ is optimal and $(i)$ is proved.

\smallskip

\emph{Claim 3. Assume that, either $p>2$ and $\lambda>0$, or $p=2$, $%
\lambda>0$ and $\omega <0$. Assume that $(u^*,y^*)$ is optimal at $(t,x)$,
and let $\pi^*$ be the associated solution of \eqref{pmild}. Then $G$ is
G\^ateaux differentiable in $u^*$ with $G^\prime(u^*)=B^*\pi^*$.
Consequently $\partial J(u^*)=B^*\pi^* + S$.}

\medskip

First of all we recall that, by assumption and by Lemma \ref{p^*def}, we
have $\pi^*\in L^q_\lambda(t,+\infty;V)$. Moreover, for any fixed $v$ in $%
L^p_\lambda(t,+\infty;U)$, and any $\epsilon>0$, there exists $%
0<\epsilon_0\le\epsilon$ such that
\begin{equation*}
\begin{split}
&\frac{G(u^*+\epsilon v)-G(u^*)}{\epsilon} = \int_t^{+\infty} \frac{%
g_0(y(\tau;u^*+\epsilon v))-g_0(y^*(\tau))}{\epsilon} e^{-\lambda \tau}d\tau
= \\
&=\int_t^{+\infty}\left\langle g_0^\prime(y(\tau;u^*+\epsilon_0 v)),
y(\tau;u^*+\epsilon_0 v)- y^*(\tau)\right\rangle e^{-\lambda \tau}d\tau \\
&= \int_t^{+\infty}\left\langle g_0^\prime(y(\tau;u^*+\epsilon_0 v)),
\int_t^\tau e^{(\tau-\sigma)A}B v(\sigma)d\sigma \right\rangle e^{-\lambda
\tau}d\tau
\end{split}%
\end{equation*}
Hence, arguing as in \eqref{eq:Gv-Gu} to rewrite the term $\langle
B^*\pi^*,v\rangle_{L^q_\lambda, L^p_\lambda}$, we get \color{black}
\begin{equation*}
\begin{split}
&\left\vert\frac{G(u^*+\epsilon v)-G(u^*)}{\epsilon} -\langle
B^*\pi^*,v\rangle_{L^q_\lambda, L^p_\lambda}\right\vert= \\
&= \left\vert\int_t^{+\infty}\frac{g_0(y(\tau;u^*+\epsilon v))-g_0(y^*(\tau))%
}{\epsilon}e^{-\lambda \tau}d\tau-\langle
B^*\pi^*,v\rangle_{L^q_\lambda,L^p_\lambda}\right\vert= \\
&=\left\vert\int_t^{+\infty}\langle g_0^\prime(y(\tau;u^*+\epsilon_0 v))-
g_0^\prime(y^*(\tau)),\int_t^\tau e^{(\tau-\sigma)A}B
v(\sigma)d\sigma\rangle e^{-\lambda\tau}d\tau\right\vert \\
&\le [g_0^\prime]\epsilon_0\int_t^{+\infty}\left\vert \int_t^\tau
e^{(\tau-\sigma)A}B v(\sigma)d\sigma\right \vertv^2e^{-\lambda\tau}d\tau=:I
\end{split}%
\end{equation*}
We estimate now the right hand side in the case {when $p>2$ and $%
\lambda\not=\omega p$}. By H\"older inequality one has
\begin{equation*}
\begin{split}
\left\vert \int_t^\tau e^{(\tau-\sigma)A}B v(\sigma)d\sigma\right \vertv&\le
e^{\omega\tau}\left[\int_t^\tau e^{\left(\frac{\lambda}{p}%
-\omega\right)q\sigma} d\sigma\right]^{\frac{1}{q}}\Vert
B\Vert_{\mathcal L(U,V^\prime)}\vert v\vert_{L^p_\lambda(t,+\infty;U)} \\
&= e^{\omega\tau}\left\vert\frac{ e^{\left(\frac{\lambda}{p}%
-\omega\right)q\tau} -e^{\left(\frac{\lambda}{p}-\omega\right)qt}} {q\left(%
\frac{\lambda}{p}-\omega\right)}\right\vert^{\frac{1}{q}}\Vert
B\Vert_{\mathcal L(U,V^\prime)}\vert v\vert_{L^p_\lambda(t,+\infty;U)}
\end{split}%
\end{equation*}
Then, for a suitable constant $C_5$, depending on $g$, $B$ and $v$, we have
\begin{equation}  \label{I}
\begin{split}
I&\le [g_0^\prime]\epsilon_0\int_t^{+\infty} \left\vert
e^{\omega\tau}\left\vert\frac{ e^{\left(\frac{\lambda}{p} -\omega\right)q%
\tau} -e^{\left(\frac{\lambda}{p}-\omega\right)qt}} {q\left(\frac{\lambda}{p}%
-\omega\right)}\right\vert^{\frac{1}{q}}\Vert B\Vert_{\mathcal L(U,V^\prime)}\vert
v\vert_{L^p_\lambda(t,+\infty;U)}\right\vert^2 e^{-\lambda\tau}d\tau \\
&\le C_5 \epsilon_0\int_t^{+\infty} \left\vert e^{\left(\frac{\lambda}{p}%
-\omega\right)q\tau} -e^{\left(\frac{\lambda}{p}-\omega\right)qt}
\right\vert^{\frac{2}{q}} e^{-(\lambda-2 \omega) \tau}d\tau \\
&\le C_5\epsilon_0 \int_t^{+\infty} \left( e^{\left(\frac{\lambda}{p}
-\omega\right)q\tau}\vee 1\right)^\frac{2}{q}e^{-(\lambda-2\omega)\tau}d\tau
\\
&= C_5\epsilon_0 \int_t^{+\infty} e^{\lambda\left(\frac{2}{p}%
-1\right)\tau}\vee e^{-(\lambda-2\omega)\tau}d\tau. \\
\end{split}%
\end{equation}
Since $p>2\Rightarrow\lambda\left(\frac{2}{p}-1\right)<0,$ then one may let $%
\epsilon\to 0$ and obtains that the right hand side in (\ref{I}) goes to $0$.

{Let now $p>2$ and $\lambda=\omega p$. By H\"older inequality one has}
\begin{equation*}
\begin{split}
\left\vert \int_t^\tau e^{(\tau-\sigma)A}B v(\sigma)d\sigma\right \vertv
&\le \Vert B\Vert_{\mathcal L(U,V^\prime)} e^{\omega\tau}\int_t^\tau e^{-\frac{\lambda%
}{p}\sigma} \vert v(\sigma)\vert_U d\sigma \\
&\le \Vert B\Vert_{\mathcal L(U,V^\prime)}e^{\omega\tau}\vert \tau-t\vert^{\frac{1}{q}%
} \vert v\vert_{L^p_\lambda(t,+\infty;U)}
\end{split}%
\end{equation*}
Then, for suitable $C_6 >0$ we get
\begin{equation*}
\begin{split}
I&\le [g_0^\prime]\epsilon_0\int_t^{+\infty}\left\vert \Vert
B\Vert_{\mathcal L(U,V^\prime)}e^{\omega\tau}\vert \tau-t\vert^{\frac{1}{q}} \vert
v\vert_{L^p_\lambda(t,+\infty;U)} \right\vert v^2e^{-\lambda\tau}d\tau \\
&\le C_6\epsilon_0 \int_t^{+\infty} \vert \tau-t\vert^{\frac{2}{q}%
}e^{-(\lambda-2\omega)\tau}d\tau
\end{split}%
\end{equation*}
and by letting $\epsilon\to 0$ the right hand side goes to 0.

Let now $p=2$, $\lambda>0$ and $\omega <0$. Then we have
\begin{equation*}
\begin{split}
\left\vert \int_t^\tau e^{(\tau-\sigma)A}B v(\sigma)d\sigma\right \vertv^2
&\le \Vert B\Vert_{\mathcal L(U,V^\prime)}^2 \left[\int_t^\tau
e^{\omega(\tau-\sigma)}|v(\sigma)|d\sigma \right]^2 \\
&\le \Vert B\Vert_{\mathcal L(U,V^\prime)}^2 \frac{1}{\omega}\left[%
e^{\omega(\tau-t)}-1\right] \int_t^\tau e^{\omega(\tau-\sigma)}|v(\sigma)|^2
d\sigma
\end{split}%
\end{equation*}
where in the last step we used the Jensen's inequality. Hence, by using
Fubini-Tonelli Theorem, we get
\begin{equation*}
\begin{split}
&\int_t^{+\infty}\left\vert \int_t^\tau e^{(\tau-\sigma)A}B
v(\sigma)d\sigma\right \vertv^2 e^{-\lambda \tau}d\tau \le \\
&\le \Vert B\Vert_{\mathcal L(U,V^\prime)}^2 \int_t^{+\infty} \frac{1}{\omega}\left[%
e^{\omega(\tau-t)}-1\right] e^{-\lambda \tau} \int_t^\tau
e^{\omega(\tau-\sigma)}|v(\sigma)|^2 d\sigma d\tau = \\
&= \Vert B\Vert_{\mathcal L(U,V^\prime)}^2 e^{-\lambda t} \int_t^{+\infty}
e^{\omega(\sigma-t)}|v(\sigma)|^2 \int_\sigma^{+\infty} \frac{1}{\omega}%
\left[e^{\omega(\tau-t)}-1\right] e^{-(\lambda-\omega) (\tau-t)} d\tau
d\sigma = \\
&= \Vert B\Vert_{\mathcal L(U,V^\prime)}^2 e^{-\lambda t} \int_t^{+\infty}
|v(\sigma)|^2 \frac{1}{\omega}\left[ \frac{e^{-(\lambda-\omega)(\sigma-t)}}{%
\lambda-2\omega}- \frac{e^{-\lambda(\sigma-t)}}{\lambda-\omega} \right]
d\sigma \le \\
&\le \Vert B\Vert_{\mathcal L(U,V^\prime)}^2 \frac{1}{(-\omega)(\lambda-\omega)}
\int_t^{+\infty} |v(\sigma)|^2 e^{-\lambda\sigma} d\sigma
\end{split}%
\end{equation*}
where, in the last inequality, we used that $\omega<0$. This immediately
implies that
\begin{equation*}
I\le [g_0^\prime]\epsilon_0 \Vert B\Vert_{\mathcal L(U,V^\prime)}^2 \frac{1}{%
(-\omega)(\lambda-\omega)} \vert v\vert_{ L^2_\lambda(t,+\infty;U)}
\end{equation*}
which immediately gives the claim.

\smallskip

\emph{Claim 4. Assume that, either $p>2$ and $\lambda>0$, or $p=2$, $%
\lambda>0$ and $\omega <0$. Assume that $(u^*,y^*)$ is optimal at $(t,x)$,
and let $\pi^*$ be the associated solution of \eqref{pmild}. Then $%
(\pi^*,u^*,y^*)$ is a mild solution of \eqref{CO}.}
%The same holds if $p=2$ and $(u^*,y^*)$ is optimal and constant in time.}

%Then $G$ is G\^ateaux differentiable in
%$u^*$ with $G^\prime(u^*)=B^*\pi^*$.
%Consequently $\partial J(u^*)=B^*\pi^* + S$.}

\smallskip

{We only need to prove that the last line of \eqref{CO}. From optimality of $%
u^*$ we have $\partial J(u^*)\ni 0$.} Then Claim 1 and Claim 3 imply $(\ref%
{mp2})$ and Claim 4 follows.
%{When instead $p=2$, and $(u^*,y^*)$ is optimal and constant in time,
%then  we set $\pi^*=(\lambda-A_0^*)^{-1}(g_0^\prime(y^*))$ and note that it lies in $\partial G(u^*)$. Then by optimality $B^*\pi^* \in -\partial h_0(u^*)$ and the conclusion follows.}

\hfill$\Box$

\medskip

\noindent\textbf{Proof of Theorem \ref{pi^*} }  Firstly we prove that $%
\pi^*(t;t,x)\in \partial {Z_0}(x)$. We recall that in Theorem \ref{Pmp} we
showed that $-B^*\pi^*(\tau;t,x)\in\partial h_0(u^*(\tau))$ almost
everywhere in $[0,+\infty)$. Then, for all $\bar x\in V^\prime$, and an
associated control $\bar u$, optimal at $(t,\bar x)$, we have
\begin{equation*}
\begin{split}
{Z_0}(\bar x)&- {Z_0}(x)= e^{\lambda t}\left[J (t,\bar x, \bar u)-J (t, x,
u^*)\right] \\
&\ge \int_t^{+\infty}\bigg[\langle g^\prime_0(y^*(\tau)), \bar
y(\tau)-y^*(\tau)\rangle -( B^*\pi^*(\tau;t,x))\vert \bar
u(\tau)-u^*(\tau))_U\bigg]e^{-\lambda(\tau-t)}d\tau. \\
\end{split}%
\end{equation*}
Note that
\begin{equation*}
\begin{split}
&\int_t^{+\infty}\left\langle g^\prime_0(y^*(\tau)), \bar
y(\tau)-y^*(\tau)\right\rangle_{V,V^\prime}e^{-\lambda(\tau-t)}d\tau= \\
&=\int_t^{+\infty}\left\langle g^\prime_0(y^*(\tau)), e^{A(\tau-t) }(\bar
x-x) +\int_t^\tau e^{A(\tau-\sigma)} B(\bar u(\sigma)-u^*(\sigma))d\sigma
\right\rangle e^{-\lambda(\tau-t)}d\tau= \\
&=\int_t^{+\infty}\left\langle
e^{(A_1^*-\lambda)(\tau-t)}g^\prime_0(y^*(\tau)), \bar x-x\right\rangle d
\tau+ \\
&\qquad + \int_t^{+\infty}\!\!\!\!\int_t^\tau \left\langle
g^\prime_0(y^*(\tau)), e^{A(\tau-\sigma)}\left(B( \bar
u(\sigma)-u^*(\sigma)\right)\right \rangle e^{-\lambda(\tau-t)}d\sigma d\tau.
\end{split}%
\end{equation*}
The last term can be rewritten exchanging the integrals as
\begin{equation*}
\begin{split}
&\int_t^{+\infty}\!\!\!\!\int_\sigma^{+\infty}\!\!\!\! \left(
B^*e^{(A_1^*-\lambda)(\tau-\sigma)} g^\prime_0(y^*(\tau))\vert \bar
u(\sigma)-u^*(\sigma)\right)_{U} e^{-\lambda(\sigma-t)}d\tau d\sigma = \\
=& \int_t^{+\infty}\!\!\!\! \left( B^*\pi^*(\sigma;t,x)\vert \bar
u(\sigma)-u^*(\sigma)\right)_{U} e^{-\lambda(\sigma-t)}d\sigma.
\end{split}%
\end{equation*}
Hence we get
\begin{equation*}
\begin{split}
{Z_0}(\bar x)&- {Z_0}( x)\ge\left \langle\int_t^{+\infty}
e^{(A_1^*-\lambda)(\tau-t)}g^\prime_0(y^*(\tau))d\tau, \bar x-x\right\rangle
=\langle \pi^*(t;t,x),\bar x-x\rangle,
\end{split}%
\end{equation*}
and the assertion is proven. The proof  {that $\pi^*(\tau;\tau,y^*(\tau))\in
\partial {Z_0} (y^*(\tau))$ for every $\tau\ge t$} %of the second statement
is standard but we write it here for the sake of completeness. Let $\tau >t$
and observe that, by the dynamic programming principle, the control defined
by
\begin{equation*}
u_{0,y^*(\tau)}(\sigma)\equiv u^*(\sigma+\tau)
\end{equation*}
is optimal at $(0,y^*(\tau))$. Consequently the associated trajectory
satisfies
\begin{equation*}
y(\sigma;0,y^*(\tau), u_{0,y^*(\tau)})=y(\sigma+\tau; \tau,y^*(\tau),
u^*)=y^*(\sigma+\tau).
\end{equation*}
Then by the first part of the proof we have
\begin{equation*}
\begin{split}
\partial {Z_0}(y^*(\tau))&\ni\pi^*(0;0, y^*(\tau))= \\
&=\int_0^{+\infty}e^{(A_1^*-\lambda)\sigma}g_0^\prime(y(r;0,y^*(%
\tau),u_{0,y^*(\tau)}))d\sigma \\
&=\int_0^{+\infty}e^{(A_1^*-\lambda)\sigma}g_0^\prime(y^*(\sigma+\tau))d%
\sigma \\
&=\int_\tau^{+\infty}e^{(A_1^*-\lambda)(r-\tau)}g_0^\prime(y^*(r))dr \\
&=\pi^*(\tau;\tau,y^*(\tau))
\end{split}%
\end{equation*}
which gives the claim. It finally suffices to note that, when $p>2$, one has
by Theorem \ref{th:mainnew}, that $Z_0=\Psi$ and $\partial\Psi(x)=\{\Psi^%
\prime(x)\}$ to complete the proof. \hfill$\Box$

\medskip

\section{Proofs of Subsection \protect\ref{SS:EQPT}}

We here work out the proofs of the results stated in Subsection \ref{SS:EQPT}.

\smallskip

\noindent\textbf{Proof of Theorem \ref{th:equiv}.} We first prove $(i)$.
Assume $(\bar x,\bar \pi, \bar u)$ is a MP-equilibrium point for problem
(P).
Since $\lambda > \omega$, then $\lambda \in \rho(A_1^*)$, and the
second of \eqref{eq:PMP-1} applies, implying
\begin{equation}  \label{eq:barpig0}
\bar \pi=(\lambda-A_1^*)^{-1}g_0^\prime(\bar x).
\end{equation}
Then, plugging  \eqref{eq:barpig0} into the third equation of
\eqref{def:MPEP} and then $\bar u$ so obtained into the first, we derive \eqref{eqlts}.
Moreover, using \eqref{eq:barpig0} in  Theorem \ref{pi^*}, we
get \eqref{eq:MPisCLEweak}.

\medskip
We now prove $(ii)$. We consider a CLE-equilibrium point $\hat x$ and set $%
\hat u:=(h_0^*)^\prime(-B^*\Psi_0^{\prime }(\hat x))$. By Theorem \ref%
{th:uniquefeedback} and Remark \ref{rm:uniquefeedback} we know that the
couple $(\hat x, \hat u)$ is optimal.

By Theorem \ref{Pmp}-$(ii)$ we can associate, to such optimal couple, a
costate $\hat \pi$ which is the (mild) solution of the costate equation
in \eqref{CO}. Such mild solution is then necessarily stationary (see Definition \ref%
{df:mild}) and, since $\lambda>\omega$, given by
\begin{equation*}
\hat \pi \equiv (\lambda-A_0^*)^{-1}g_0^\prime(\hat x).
\end{equation*}
Moreover, by Theorem \ref{pi^*} it is also true that
$\hat \pi=\Psi^{\prime }(\hat x)$. As a consequence, $(\hat x, \hat \pi, \hat u)$ solves \eqref{def:MPEP}
and is then a MP-equilibrium point. \hfill$\Box$

Before demonstrating Lemma \ref{fixedpoint} we need to state and prove the
following result.

\begin{prop}
Assume that $0\in \rho(A_1^*)$ (that is, $(A_1^*)^{-1}$ is well defined and
bounded in $H$). Then $A^{-1}$ has bounded inverse on $V^\prime$, defined by
the position
\begin{equation*}
\langle A^{-1}f,\varphi\rangle = \langle
f,(A_1^*)^{-1}\varphi\rangle, \ for\ all\ f\in V^\prime\ and\
\varphi\in V.
\end{equation*}
Moreover
\begin{equation*}
\Vert A^{-1}\Vert_{\mathcal L(V^\prime)}\le \Vert (A_1^*)^{-1}\Vert_{\mathcal L(H)}.
\end{equation*}
\end{prop}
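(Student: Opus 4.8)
The plan is to produce $A^{-1}$ explicitly as the adjoint, in the duality $\langle\cdot,\cdot\rangle$, of the operator $(A_1^*)^{-1}$, to read off the displayed formula from that construction, and to control its $\mathcal{L}(V^{\prime})$-norm; the only genuine difficulty is getting the estimate with the $H$-operator-norm of $(A_1^*)^{-1}$ rather than its $V$-operator-norm.

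\emph{Step 1 (the main obstacle).} First I would show that $(A_1^*)^{-1}$ is bounded also for the norm of $V$, with $\Vert(A_1^*)^{-1}\Vert_{\mathcal{L}(V)}\le\Vert(A_1^*)^{-1}\Vert_{\mathcal{L}(H)}$. Since $A_1^*$ generates a strongly continuous semigroup on $V$ (the adjoint semigroup of $\{e^{\tau A}\}$, strongly continuous because $V^{\prime}$ is reflexive), $D(A_1^*)$ is dense in $V$; moreover, by the way $V$ is normed inside the Gelfand triple, on $D(A_1^*)$ the norm of $V$ is the $A_1^*$-graph norm in $H$, i.e. $\vert\varphi\vert_V^2=\vert\varphi\vert_H^2+\vert A_1^*\varphi\vert_H^2$. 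Then for $\varphi\in D(A_1^*)$ one has $(A_1^*)^{-1}\varphi\in D(A_1^*)$ and
\[
\vert(A_1^*)^{-1}\varphi\vert_V^2=\vert(A_1^*)^{-1}\varphi\vert_H^2+\vert\varphi\vert_H^2\le\Vert(A_1^*)^{-1}\Vert_{\mathcal{L}(H)}^2\big(\vert\varphi\vert_H^2+\vert A_1^*\varphi\vert_H^2\big)=\Vert(A_1^*)^{-1}\Vert_{\mathcal{L}(H)}^2\,\vert\varphi\vert_V^2 ,
\]
using $\vert(A_1^*)^{-1}\varphi\vert_H\le\Vert(A_1^*)^{-1}\Vert_{\mathcal{L}(H)}\vert\varphi\vert_H$ and $\vert\varphi\vert_H=\vert(A_1^*)^{-1}(A_1^*\varphi)\vert_H\le\Vert(A_1^*)^{-1}\Vert_{\mathcal{L}(H)}\vert A_1^*\varphi\vert_H$. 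Density of $D(A_1^*)$ in $V$ extends the inequality to all of $V$; in particular $(A_1^*)^{-1}\in\mathcal{L}(V)$ and $0\in\rho(A_1^*)$ for $A_1^*$ regarded as an operator on $V$. This is the step that really uses the hypothesis in the form ``$(A_1^*)^{-1}$ bounded in $H$'', and is what allows the estimate to be closed with no extraneous constant; it is also where the precise normalization of $V$ in the triple matters, so it is the point to be handled carefully.

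\emph{Step 2 (definition and norm of $A^{-1}$).} Next I would define $A^{-1}\colon V^{\prime}\to V^{\prime}$ by $\langle A^{-1}f,\varphi\rangle:=\langle f,(A_1^*)^{-1}\varphi\rangle$ for all $f\in V^{\prime}$, $\varphi\in V$. This is meaningful since $(A_1^*)^{-1}\varphi\in D(A_1^*)\subset V$, and $\varphi\mapsto\langle f,(A_1^*)^{-1}\varphi\rangle$ is linear with $\vert\langle f,(A_1^*)^{-1}\varphi\rangle\vert\le\vert f\vert_{V^{\prime}}\Vert(A_1^*)^{-1}\Vert_{\mathcal{L}(V)}\vert\varphi\vert_V$, hence determines a unique element of $V^{\prime}$. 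Thus $A^{-1}$ is exactly the adjoint of $(A_1^*)^{-1}\in\mathcal{L}(V)$ with respect to $\langle\cdot,\cdot\rangle$, and since a dual operator has the same norm as the operator itself,
\[
\Vert A^{-1}\Vert_{\mathcal{L}(V^{\prime})}=\Vert(A_1^*)^{-1}\Vert_{\mathcal{L}(V)}\le\Vert(A_1^*)^{-1}\Vert_{\mathcal{L}(H)},
\]
which is the asserted bound, and the displayed identity is the very definition.

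\emph{Step 3 (it is the inverse of $A$).} Finally I would verify that $A^{-1}$ really inverts $A$. Because $A$ is a closed, densely defined operator on the reflexive space $V^{\prime}$ (being the generator of a strongly continuous semigroup) and $A_1^*$ is by definition its adjoint in $\langle\cdot,\cdot\rangle$, one has $0\in\rho(A)\Longleftrightarrow0\in\rho(A_1^*)$; hence $A$ is boundedly invertible, and since the adjoint of an invertible operator is invertible with inverse the adjoint of the inverse, the adjoint of $(A_1^*)^{-1}$ — i.e. the operator constructed in Step 2 — coincides with $A^{-1}$. (If one prefers to avoid this spectral fact, the same conclusion follows directly: $Ax=0$ gives $\langle x,A_1^*\chi\rangle=\langle Ax,\chi\rangle=0$ for all $\chi\in D(A_1^*)$, hence $\langle x,\psi\rangle=0$ for all $\psi\in V$ since the range of $A_1^*$ is $V$, so $x=0$; and for $f\in V^{\prime}$ the element $x:=A^{-1}f$ of Step 2 satisfies $\langle Ax,\chi\rangle=\langle x,A_1^*\chi\rangle=\langle f,(A_1^*)^{-1}A_1^*\chi\rangle=\langle f,\chi\rangle$ for every $\chi\in D(A_1^*)$, so $Ax=f$.)
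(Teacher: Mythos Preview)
Your proposal is correct and follows essentially the same route as the paper. The paper's proof is just your Step~1 and Step~2 compressed into a single chain of inequalities
\[
\vert\langle A^{-1}f,\varphi\rangle\vert\le\vert f\vert_{V'}\vert(A_1^*)^{-1}\varphi\vert_V=\vert f\vert_{V'}\big(\vert(A_1^*)^{-1}\varphi\vert_H+\vert(A_1^*)^{-1}A_1^*\varphi\vert_H\big)\le\vert f\vert_{V'}\Vert(A_1^*)^{-1}\Vert_{\mathcal L(H)}\vert\varphi\vert_V,
\]
using the $\ell^1$-graph norm on $V$ where you use the $\ell^2$-version; the paper also tacitly takes $\varphi\in D(A_1^*)$ (to commute $A_1^*$ and its inverse) without mentioning the density argument you spell out, and it omits your Step~3 entirely. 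So your argument is the same in substance, just more carefully written.
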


\begin{proof}
For all $f\in V^\prime$ and $\varphi\in V$ we have
\begin{equation}
\begin{split}
\vert \langle A^{-1}f,\varphi\rangle \vert&=\vert \langle
f,(A_1^*)^{-1}\varphi\rangle \vert \\
&\le\vert f\vert_{_{V^\prime}}\vert(A_1^*)^{-1}\varphi\vert_V \\
&=\vert f\vert_{_{V^\prime}}(\vert(A_1^*)^{-1}\varphi\vert_H+\vert
A_1^*(A_1^*)^{-1}\varphi\vert_H) \\
&=\vert f\vert_{_{V^\prime}}(\vert(A_1^*)^{-1}\varphi\vert_H+
\vert(A_1^*)^{-1}A_1^*\varphi\vert_H) \\
&\le\vert
f\vert_{_{V^\prime}}\Vert(A_1^*)^{-1}\Vert_{\mathcal L(H)}\vert\varphi\vert_V
\end{split}%
\end{equation}
\end{proof}

\noindent\textbf{Proof of Lemma \ref{fixedpoint}.}

Define $T: V^\prime\to V^\prime$ as as in (\ref{Tlambda}). By its
definition, $T$ satisfies
\begin{equation}
\begin{split}
\vert Tx-Ty\vert_{_{V^\prime}}&\le \Vert (A)^{-1}\Vert_{\mathcal L(V^{\prime })}\Vert
B\Vert^2_{\mathcal L(U,V^\prime)} [(h_0^*)^\prime][g_0^\prime]\frac{1}{\lambda-\omega}
\vert x-y\vert_{_{V^\prime}}.
\end{split}%
\end{equation}
implying the claim. Since $T(V^{\prime })\subseteq D(A)$, then fixed points
of $T$ lie in $D(A)$. \hfill$\Box$

\medskip

\noindent \textbf{Proof of Proposition \ref{pr:stabilitybis}} Let $x\in I(%
\bar{x})$ and let $x^{\ast }(t)$ be the associated optimal trajectory, i.e.
the solution of the closed loop equation
\begin{equation}
\begin{cases}
y^{\prime }(t)=Ay(t)+f(y(t)), & t>0 \\
y(0)=x, &
\end{cases}
\label{CLEfbis}
\end{equation}%
Let $x_{n}\in I(\bar{x})\cap D(A)$ be such that $x_{n}\rightarrow x$ in $%
V^{\prime }$ as $n\rightarrow +\infty $. Let $y_{n}^{\ast }(t)$ be the
associated optimal trajectory. Since $y_{n}^{\ast }$ is continuous, then it
must remain in $I(\bar{x})$ at least for a sufficiently small $t$. For such $%
t$ we must have (recall that $y_{n}^{\ast }(t)\in D(A)$ by assumption)
\begin{equation*}
\frac{1}{2}\frac{d}{dt}|y_{n}^{\ast }(t)-\bar{x}|_{V^{\prime }}^{2}=\big(
(y_{n}^{\ast }(t))^{\prime }\vert y_{n}^{\ast }(t)-\bar{x}\big) _{V^{\prime }}
\end{equation*}%
\begin{equation*}
=\big( A(y_{n}^{\ast }(t)-\bar{x})+f(y_{n}^{\ast }(t))-f(\bar{x}%
)\vert y_{n}^{\ast }(t)-\bar{x}\big) _{V^{\prime }}\leq \xi |y_{n}^{\ast }(t)-%
\bar{x}|_{V^{\prime }}^{2}.
\end{equation*}%
This implies that
\begin{equation}
|y_{n}^{\ast }(t)-\bar{x}|_{V^{\prime }}^{2}\leq e^{2\xi t}|x_{n}-\bar{x}%
|_{V^{\prime }}^{2}  \label{eq:xndiss}
\end{equation}

Next we take the limits as $n\rightarrow +\infty $. Note that $%
z_{n}(t):=y_{n}^{\ast }(t)-x^{\ast }(t)$ solves
\begin{equation}
\begin{cases}
z_{n}^{\prime }(t)=Az_{n}(t)+f(y_{n}^{\ast }(t))-f(x^{\ast }(t)), & t>0 \\
z(0)=x_{n}-x. &
\end{cases}
\label{CLEappr}
\end{equation}%
By Theorem \ref{th:uniquefeedback} $f$ is Lipschitz continuous so with
Lipschitz constant $[f]_{0,1}$, a standard Gronwall inequality implies
\begin{equation*}
|y_{n}^{\ast }(t)-x^{\ast }(t)|_{V^{\prime }}\leq |x_{n}-x|_{V^{\prime
}}e^{(\omega +[f]_{0,1})t},
\end{equation*}

hence $|y_{n}^{\ast }(t)-x^{\ast }(t)|_{V^{\prime }}$ converges to $0$ for
every $t\geq 0$. Consequently, from \eqref{eq:xndiss} one has
\begin{equation*}
|x^{\ast }(t)-\bar{x}|_{V^{\prime }}^{2}\leq e^{2\xi t}|x-\bar{x}%
|_{V^{\prime }}^{2},\ \ \text{ for\ every\ }t\geq 0,
\end{equation*}%
implying the claims. \hfill $\Box $

\medskip

\noindent \textbf{Proof of Corollary \ref{cor:Adiss}.} It is enough to check
that \eqref{eq:AdissV'} is verified, either in $I$, or in $V^{\prime }$. If $%
f$ is Lipschitz continuous on $I$ (in the topology of $V^{\prime }$), with
Lipschitz constant $\bar{\theta}<\theta $ then we have
\begin{equation*}
(f(x)-f(\bar{x})|x-\bar{x})_{V^{\prime }}\leq \bar{\theta}|x-\bar{x}%
|_{V^{\prime }}^{2},\ \ \forall x\in I.
\end{equation*}%
This immediately implies that \eqref{eq:AdissV'} holds in $I$ with  $\xi =%
\bar{\theta}-\theta $.

Similarly, if $f$ is globally Lipschitz continuous in $V^{\prime }$, we get
that \eqref{eq:AdissV'} is verified in the whole $V^{\prime }$. \color{black}

\hfill$\Box$

\section{Proofs of Section \protect\ref{sec:avc}}\label{AC}
\medskip

\noindent \textbf{Proof of Lemma \ref{lm:fxptvintage}.}
Note that if $A_0^*$ and  $A$ are  the operators
described in Section \ref{INTROSUBMAT}, then the following two facts hold true. First,  by definition of $A_0^*$, we have 
$\bar{\alpha}=(\lambda -A_{0}^{\ast })^{-1}\alpha$. Second, $A$ is invertible, so that    equation 
\eqref{eqlts} may be rewritten as $Tx=x$ by means of the operator $T$ defined in (\ref{Tlambda}).
Now (\ref{dati}) holds, so that one has
\begin{equation*}
g_{0}^{\prime }(x)=-R^{\prime }(\left\langle \alpha ,x\right\rangle )\alpha
,\ \text{\ and\ \ } (\lambda -A_{0}^{\ast })^{-1}g_{0}^{\prime
}(x)=-R^{\prime }(\left\langle \alpha ,x\right\rangle )\overline{\alpha }
\end{equation*}%
Moreover, since $h_{0}(u)=C(u_{0},u_{1})=C_{0}(u_{0})+C_{1}(u_{1}),$ then
the convex conjugate $C^{\ast }$ of $C$ is also of type $C^{\ast
}(u_{0},u_{1})=C_{0}^{\ast }(u_{0})+C_{1}^{\ast }(u_{1}).$ Then, recalling
the definition of $B,B^{\ast }$ in Section \ref{MODEL}, and by means of (\ref%
{alphabar}), the operator $T$ defined in (\ref{Tlambda}) can be rewritten as
follows:
\begin{eqnarray*}
Tx &=&-A^{-1}B(C^{\ast })^{\prime }\left( B^{\ast }R^{\prime }(\left\langle
\alpha ,x\right\rangle )\overline{\alpha }\right) \\
&=&-A^{-1}B(C^{\ast })^{\prime }\left( R^{\prime }(\left\langle \alpha
,x\right\rangle )\overline{\alpha }(0),R^{\prime }(\left\langle \alpha
,x\right\rangle )\overline{\alpha }\right) \\
&=&-A^{-1}B\left( \left( C_{0}^{\ast }\right) ^{\prime }\left( R^{\prime
}(\left\langle \alpha ,x\right\rangle )\overline{\alpha }(0)\right) ,\left(
C_{1}^{\ast }\right) ^{\prime }\left( R^{\prime }(\left\langle \alpha
,x\right\rangle )\overline{\alpha }\right) \right) \\
&=&-A^{-1}\left[ \left( C_{0}^{\ast }\right) ^{\prime }\left( R^{\prime
}(\left\langle \alpha ,x\right\rangle )\overline{\alpha }(0)\right) \delta
_{0}+\left( C_{1}^{\ast }\right) ^{\prime }\left( R^{\prime }(\left\langle
\alpha ,x\right\rangle )\overline{\alpha }\right) \right] \\
&=&\left( C_{0}^{\ast }\right) ^{\prime }\left( R^{\prime }(\left\langle
\alpha ,x\right\rangle )\overline{\alpha }(0)\right) [-A^{-1}\delta
_{0}]+[-A^{-1}]\left( C_{1}^{\ast }\right) ^{\prime }\left( R^{\prime
}(\left\langle \alpha ,x\right\rangle )\overline{\alpha }\right)
\end{eqnarray*}%
which by (\ref{A-1}) implies (\ref{Texpl}). By Theorem \ref{th:equiv}
we derive the remaining statements. \hfill $\Box $

\medskip

\noindent \textbf{Proof of Theorem \ref{equgen}.} Set $\eta:V^{\prime }\to \mathbb{R}$, $\eta(x) =R^{\prime
}(\left\langle \alpha ,x\right\rangle )$ and note that, by \eqref{Texpl}, we
get $Tx(s)=F(\eta (x))(s)$. Then the equation $Tx=x$ is rewritten as
\begin{equation*}
F(\eta (x))[s]=x(s), \qquad \forall s \in[0,\bar s].
\end{equation*}
Applying $\eta$ on both sides of such equation we get
\begin{equation*}
R^{\prime }(\left\langle \alpha ,F(\eta (x))\right\rangle ) = \eta (x).
\end{equation*}
Hence, if $\bar x$ is a solution of $Tx=x$, then $\eta(\bar x)$ is a
solution of \eqref{eta=R}. Viceversa, let $\bar{\eta }$ be a solution to %
\eqref{eta=R}. Then, substituting into \eqref{eq:defFeta} and \eqref{Texpl},
we get that $F(\bar{\eta })$ solves $F(\bar{\eta })=T F(\bar{%
\eta })$, so $F(\bar{\eta })$ solves $Tx=x$.  \hfill $\Box $

\medskip
\noindent \textbf{Proof of Theorem \ref{stablin}.}
Note that when  the value function $V_0$ is differentiable, then
Proposition \ref{pr:stability}  applies with $\Psi=-V_0$.
Indeed by Lemma \ref{lm:verassvintage} and   the Lumer-Philips
Theorem (see e.g. \cite[Theorem 1.4.3]{Pazy}),   the operator $A$  is
dissipative in $V^{\prime }$ with $( Ax\vert x)_{V^{\prime }}\le -\mu
|x|_{V'}^2$, and the function $f$ defined  in Proposition \ref{pr:stability}  can be rewritten by means of  \eqref{B^*} \eqref{ipo3}  as
\begin{eqnarray*}
f(x)% &=&B\left( h_{0}^{\ast }\right) ^{\prime }(-\delta_0\Psi ^{\prime }(x),-\Psi^{\prime }(x)) \\
&=&-B\left( \frac{1}{4\beta _{0}}\left(  \Psi ^{\prime
}(x)[0]  -q_{0}\right) ,\frac{1}{4\beta _{1}\left( \cdot \right) }\left(
\Psi ^{\prime }(x)-q_{1}\right) \right) \\
&=&-\frac{\langle \delta _{0},\Psi ^{\prime }(x)\rangle-q_{0}}{4\beta _{0}}\;\delta _{0}-%
\frac{1}{4\beta _{1}\left( \cdot \right) }\left( \Psi ^{\prime
}(x)-q_{1}\right).
\end{eqnarray*}
Hence, if $\bar x$ is a CLE-equilibrium point, for all $x\in V^{\prime },$ we have
\begin{equation*}
\left( f(x)-f(\bar x)\vert x-\bar x\right)_{V^{\prime }} = - \frac{\langle\delta _{0},\Psi
^{\prime }(x)-\Psi ^{\prime }(\bar x)\rangle}{4\beta _{0}}\;(\delta
_{0}\vert x-\bar x)_{V^{\prime }}- \left( \frac{%
1}{4\beta _{1}\left( \cdot \right) }\left( \Psi ^{\prime }(x)-\Psi ^{\prime
}(\bar x)\right)\bigg\vert x-\bar x\right)_{V^{\prime }}.
\end{equation*}
Note that the second term in the above inequality satisfies
$$\left( \frac{%
1}{4\beta _{1}\left( \cdot \right) }\left( \Psi ^{\prime }(x)-\Psi ^{\prime
}(\bar x)\right)\bigg\vert x-\bar x\right)_{V^{\prime }}\ge \frac 1
{4\vert \beta_1\vert_{L^\infty(0,\bar s)}}\left(\Psi ^{\prime }(x)-\Psi ^{\prime
}(\bar x)\vert x-\bar x\right)_{V^{\prime }}\ge 0$$
since $\beta _{1}\in L^\infty(0,\bar s)$, and $\Psi^\prime $ is a monotone
operator (as a consequence of the convexity of $\Psi$). That in particular
implies
\begin{equation}  \label{dissstima}
\left( f(x)-f(\bar x)\vert x-\bar x\right)_{V^{\prime }} \le -\frac{\langle\delta
_{0},\Psi ^{\prime }(x)-\Psi ^{\prime }(\bar x)\rangle}{4\beta _{0}}\;( \delta
_{0}\vert x-\bar x)_{V^{\prime }}\le \frac {[\Psi]_L \vert \delta
_{0}\vert_{V^{\prime }}^2} {4\beta_0}\vert x-\bar x\vert^2_{V^{\prime }},
\end{equation}
so that \eqref{eq:AdissV'} is satisfied with $\xi=-\mu+  {[\Psi]_L \vert \delta
_{0}\vert_{V^{\prime }}^2} /({4\beta_0})$.
 \hfill $\Box $

\end{document}